\numberwithin{equation}{section}
\theoremstyle{plain}
\newtheorem{thm}{Theorem}[section]
\newtheorem{lmm}[thm]{Lemma}
\newtheorem{prop}[thm]{Proposition} 
\newtheorem{coro}[thm]{Corollary}
\theoremstyle{definition}
\newtheorem{defn}[thm]{Definition}
\newtheorem*{rmk*}{Remark} 
\newtheorem{rmk}[thm]{Remark}
\numberwithin{equation}{section}
\newcommand{\R}{\ensuremath{\mathbb{R}}}
\newcommand{\Z}{\ensuremath{\mathbb{Z}}}
\newcommand{\supp}{\ensuremath{\mathrm{supp}}}
\newcommand{\mk}{\ensuremath{\mathfrak}}
\newcommand{\tb}{\ensuremath{\textbf}}
\newcommand{\la}{\ensuremath{\langle}}
\newcommand{\ra}{\ensuremath{\rangle}}
\newcommand{\Id}{\ensuremath{\mathrm{Id}}}
\newcommand{\oc}{\ensuremath{\mathrm{1c}}}
\newcommand{\modulesp}{\ensuremath{\mathcal{W}}}
\newcommand\ubar{\overline{u}}
\newcommand{\SE}{\mathcal E}
\newcommand{\SF}{\mathcal F}
\newcommand{\SM}{\mathcal M}
\newcommand{\SN}{\mathcal N}
\newcommand{\SP}{\mathcal P}
\newcommand{\SX}{\mathcal X}
\newcommand{\SY}{\mathcal Y}
\newcommand\ang[1]{\langle #1 \rangle}
\newcommand\Poip{\mathcal{P}_+}
\newcommand\Fic{\mathcal{F}_{\mathrm{1c}}}
\newcommand\xic{\xi_{\mathrm{1c}}}
\newcommand\etac{\eta_{\mathrm{1c}}}
\newcommand\Psisc{\Psi_{\mathrm{sc}}}
\renewcommand\Re{\operatorname{Re}}
\renewcommand\Im{\operatorname{Im}}
\title[Final state problem for NLS]{The final state problem for the nonlinear Schr\"odinger equation in dimensions 1, 2 and 3}
\author{Andrew Hassell, Qiuye Jia}
\date{\today}
\begin{document}
\maketitle

\begin{abstract}
In this article we consider the defocusing nonlinear Schr\"odinger equation, with time-dependent potential, in space dimensions $n=1, 2$ and $3$, with nonlinearity $|u|^{p-1} u$, $p$ an odd integer, satisfying $p \geq 5$ in dimension $1$, $p \geq 3$ in dimension $2$ and $p=3$ in dimension $3$. We also allow a metric perturbation, assumed to be compactly supported in spacetime, and nontrapping. 
We work with module regularity spaces, which are defined by regularity of order $k \geq 2$ under the action of certain vector fields generating symmetries of the free Schr\"odinger equation. We solve the large data final state problem, with final state in a module regularity space, and show convergence of the solution to the final state.  
\end{abstract}

\tableofcontents

\section{Introduction}\label{sec:intro}

\subsection{Nonlinear Schr\"odinger equation and the final state problem}
In this article we consider the final state problem for the nonlinear Schr\"odinger equation (NLS) in 1, 2, or 3 space dimensions. More specifically we consider 
the PDE 
\begin{align} \label{eq:NLS}
Pu := (D_t+\Delta + V)u &= \pm |u|^{p-1} u , \\ D_t &= -i \partial_t, \quad \Delta = \sum_{i=1}^n D_{z_i} D_{z_i} = - \sum_{i=1}^n \frac{\partial^2 }{\partial z_i^2}, 
 \end{align}
where $u = u(t, z)$ is a function on $\R \times \R^n$, $n = 1, 2$ or $3$, $V = V(t, z)$ is a real-valued time-dependent potential function (from a suitable function space), and $p$ is an odd integer as follows: 
\begin{equation}\begin{aligned}
 &\bullet \text{ if $n=1$, then $p \geq 5$;} \\
&\bullet \text{ if $n=2$ then $p \geq 3$;} \\
&\bullet \text{ if $n=3$ then $p=3$.}
\end{aligned}\label{eq:np}\end{equation}

This range of $n$ and $p$ ensures that $\frac{n}{2} < \frac{p+1}{p-1}$, which will ensure that a crucial Strichartz inequality is valid in Proposition~\ref{prop:Linftybound} --- see Remark~\ref{rem:upperboundp}.  We also need $p \geq 1 + 4/n$ in the proof of Lemma~\ref{lem:NLSLrdecay}. These two conditions determine the set of $(n, p)$ as in \eqref{eq:np} that we are able to treat. In the case where $V \equiv 0$ and there is an exact scaling for the equation, our condition is that the scaling-invariant Sobolev exponent $s_c$ satisfies $0 \leq s_c < 1$; that is, we consider nonlinearities that are energy-subcritical and mass-supercritical or critical (see \cite[Section 3.1]{tao2006nonlinear}). 
Notice that the nonlinearity is required to be a monomial in $u$ and $\ubar$; this is crucial for our methods, which require multiplication (algebra) results for certain function spaces described below. We mostly consider the defocusing case, which is the $-$ sign in \eqref{eq:NLS}; however, some intermediate results apply also to the focusing case. 

In fact, we will also consider the case of variable metrics on $\R^n$. Let $g(t)$ be a family of metrics on $\R^n$, depending smoothly on $t$, so that $g(t) - \delta$ is compactly supported in spacetime. In other words, $g(t) = \delta$ when $|t|$ is large, and also $g(t) = \delta$ when $|z|$ is sufficiently large. In Section~\ref{sec:metric} we consider the PDE 
\begin{equation}
 \label{eq:NLSvarg}
Pu := (D_t+\Delta_{g(t)} + V)u = \pm |u|^{p-1} u , 
 \end{equation}
where $\Delta_{g(t)}$ is the Laplace-Beltrami operator corresponding to the metric $g(t)$. However, our main result is still interesting without a metric perturbation, and in this introduction we shall discuss exclusively the case when $g(t) = \delta$. We prove the main theorem first in this case, and indicate the extra arguments required for the general case in Section~\ref{sec:metric}. 

The \emph{final state problem} for NLS is to solve the PDE \eqref{eq:NLS} subject to an asymptotic condition as $t \to +\infty$ or $t \to -\infty$. Recalling that, for smooth enough data, such as an initial condition in the Schwartz class, the solution of the linear equation 
$$
(D_t + \Delta) u = 0
$$
has a leading asymptotic as $t \to \pm \infty$ of the form 
\begin{equation}\label{eq:finalstatef}
u \sim (4\pi it)^{-n/2} e^{i|z|^2/4t} f\big(\frac{z}{2t} \big),
\end{equation}
where $f$ is a function of the variable $\zeta := z/2t$. Indeed, in this constant coefficient case, $u$ can be expressed explicitly as the `Poisson operator' $\SP_0$ applied to $f$:
\begin{align} \label{eq: Poisson definition}
\mathcal{P}_0f(t,z) = (2\pi)^{-n} \int e^{-it|\zeta|^2}e^{iz \cdot \zeta}f(\zeta)d\zeta.
\end{align}
$\mathcal{P}_0f$ is the unique solution to $(D_t+\Delta)u=0$ with final state data as $t \to \pm \infty$ both equal to $f$, in the sense that its leading term in the asymptotic expansion as $t \rightarrow \pm \infty$ is \eqref{eq:finalstatef}. 
This can be seen by applying the stationary phase lemma to the integral \eqref{eq: Poisson definition}, for smooth enough $f$, where the critical point of the phase is $\zeta = \frac{z}{2t}$. For a more precise statement about this convergence, see Theorem~\ref{thm:main1}.

This convergence can be viewed geometrically on the radial compactification of $\R^{n+1}$, where the $t$-axis is vertical for convenience. Then $\zeta = z/2t$ is a coordinate on the boundary of the compactification either on the open northern hemisphere, or the open southern hemisphere. See Figure~\ref{fig:compact}. The statement thus becomes that, after multiplication by $t^{n/2} e^{-i|z|^2/4t}$, the function $u$ has a continuous extension to the boundary of the compactification.

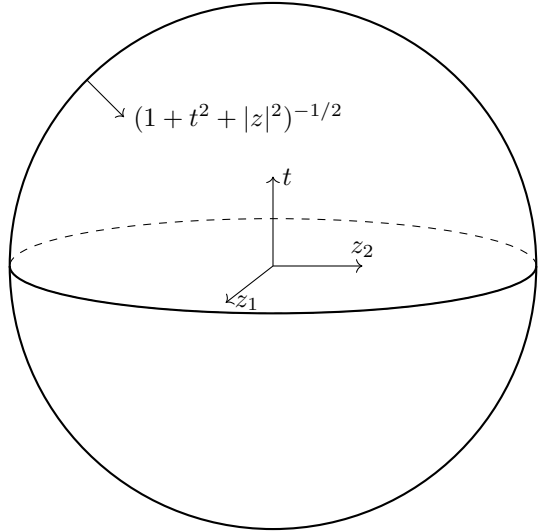
\begin{figure}\label{fig:compact}
	\centering
	\begin{tikzpicture}[scale=0.7]
		\draw[thick] (5,0) arc (0:360:5cm);
		\draw[dashed] (0,0) ellipse (5cm and 0.9cm);
		\draw[thick] (-5,0) arc (180:360:5cm and 0.9cm);
		\draw[->] (-3.536,3.536)--(-3.536*0.8,3.536*0.8) node[anchor=west]  {$(1 + t^2 + |z|^2)^{-1/2}$};
		
		\draw[->] (6-6,-1.5+1.5)--(6-6,0.2+1.5) node[anchor=west] {$t$};
		\draw[->] (6-6,-1.5+1.5)--(5.1-6,-2.2+1.5) node[anchor=west] {$z_1$};
		\draw[->] (6-6,-1.5+1.5)--(7.7-6,-1.5+1.5) node[anchor=south] {$z_2$};
	\end{tikzpicture}
	\caption{The radially compactified spacetime, with time axis oriented vertically. The boundary, which is an $S^n$, is referred to as spacetime infinity, and consists of an open `northern hemisphere', where $t = +\infty$, an open `southern hemisphere', where $t = -\infty$, and the equator, which is the indicated great circle. Any time slice $t = \mathrm{constant}$ meets spacetime infinity at the equator.  The smooth function
		$(1 + t^2 + |z|^2)^{-1/2}$ defines (i.e.\  vanishes simply at) spacetime
		infinity. The function $\zeta := z/(2t)$ extends to a smooth coordinate either on the open northern hemisphere or the open southern hemisphere. It is infinite at the equator.}
	\label{fig1}
\end{figure}

 The final state problem for the PDE \eqref{eq:NLS} is then to find $u$ satisfying this PDE with the asymptotic condition 
\begin{equation}
\lim_{t \to +\infty} (4\pi it)^{n/2} e^{-it|\zeta|^2} u(2t\zeta, t) = f_+(\zeta), \quad \text{(notice that $e^{-it|\zeta|^2} = e^{-i|z|^2/4t}$),}
\label{eq:fspos}
\end{equation}
for some prescribed function $f_+$. We could alternatively prescribe data at $t = -\infty$, in which case the condition becomes 
\begin{equation}
\lim_{t \to -\infty} (4\pi it)^{n/2} e^{-it|\zeta|^2} u(2t\zeta, t) = f_-(\zeta). 
\label{eq:fsneg}
\end{equation}
In this paper we choose to prescribe data at $t = +\infty$. Our main goal in this article is to treat the case of large data: that is, $f$ will lie in a certain function space but its norm in this function space will be unrestricted. In this respect, our results go beyond the results of \cite{gell2023scattering}, which deal only with small final state data. Large data will require us to restrict to the defocusing case.

\subsection{Main result} In this introduction we give a partial introduction to the function spaces used in our main result. A more detailed exposition is given in Section~\ref{sec: module regularity spaces}. 

The main idea of the present paper is to use a time-dependent family of function spaces to measure $u$. We present this in the language of modules, as developed in \cite{HMV2004, gell2022propagation, gell2023scattering} for example. It can also be viewed as a development of Klainerman's vector field method \cite{klainerman1985} as applied to the Schr\"odinger equation. 

We consider the following set of vector fields: 
\begin{align} \label{eq: generator Mt}
 \mathcal{M}_t: \{\Id, z_jD_{z_i}-z_iD_{z_j}, 2tD_{z_j}-z_j, D_{z_j}\}, \quad D_{z_j} = -i \partial_{z_j}. 
\end{align}

This set of vector fields (plus the identity) is closed under commutators, and generates a finite-dimensional group of symmetries of the free Schr\"odinger equation. We use these vector fields to define a module regularity space of order $k$, denoted $\modulesp^k_{\SM_t}$, depending on the parameter $t$. This consists of those functions $u \in L^2$ such that $A_1 \dots A_k u \in L^2$ for every choice of $A_i \in \SM_t$, $1 \leq i \leq k$, with norm 
\begin{align}
||u||_{\modulesp^k_{\SM_t}}^2:= \sum_{A_1, \dots, A_k \in {\SM_t}} ||A_1 \dots A_k u||^2_{L^2(\R^n)}.
\label{eq:WMt} \end{align}
These norms are all equivalent for different values of $t$, but are not uniformly so as $t \to \infty$.

We also consider the module obtained by conjugating each element of $\SM_t$ by $e^{i|z|^2/4t}$, for $t \neq 0$. This is the module 
\begin{align} \label{eq: generator Mt0}
\mathcal{M}_{t,0}: \{\Id, z_jD_{z_i}-z_iD_{z_j}, 2tD_{z_j}, D_{z_j}+\frac{z_j}{2t}\}, \quad t \neq 0.
\end{align}
(The issue with small $t$ is resolved in Section~\ref{sec: module regularity spaces}; we completely ignore this issue in the Introduction.) 
Similarly to \eqref{eq:WMt}, we define $\modulesp^k_{\SM_{t,0}}$ by 
\begin{align}
||u||_{\modulesp^k_{\SM_{t,0}}}^2:= \sum_{A_1, \dots, A_k \in {\SM_{t,0}}} ||A_1 \dots A_k u||^2_{L^2(\R^n)}.
\label{eq:WMt0} \end{align}
Now we define 
\begin{equation}\label{eq:utilde}
\tilde u(t, z) = e^{-i|z|^2/4t} u(t, z),
\end{equation}
 as when we define the pseudoconformal transformation of $u$. Conjugating the elements of $\SM_t$ by the factor $e^{-i|z|^2/4t}$ gives the elements of $\SM_{t,0}$; consequently, we have  $u \in \modulesp^k_{\SM_t} \Longleftrightarrow  \tilde u \in \modulesp^k_{\SM_{t,0}}$ (see Proposition~\ref{prop: pulling out the oscillatory factor}). 

For $t \neq 0$, we can change to an equivalent set of generators 
\begin{align} \label{eq: generators of the M0 module}
\hat{\mathcal{M}}_{t,0}:=\{\Id, z_jD_{z_i}-z_iD_{z_j}, 2tD_{z_j}, \frac{z_j}{2t}\},
\end{align}
which is obtained by replacing the last generator in \eqref{eq: generator Mt0} by 
using 
\begin{align*}
\frac{z_j}{2t} = D_{z_j}+\frac{z_j}{2t} - \frac{1}{2t}(2tD_{z_j}),
\end{align*}
and this shows the norm equivalence between their corresponding module regularity spaces, which is uniform for large $|t| \geq c > 0$. Now we can notice that if we change spatial variable from $z$ to the `pseudoconformal' variable $\zeta = z/(2t)$, then the generators in $\mathcal{M}_{t,0}$ become 
\begin{align} \label{eq: zeta generator}
\SN := \{ \Id, {\zeta}_jD_{{\zeta}_i}-{\zeta}_iD_{
{\zeta}_j},D_{{\zeta}_j},{\zeta}_j \}. 
\end{align}
There is a corresponding module regularity space $\modulesp^k_{\SN}$ (defined as above for $\modulesp^k_{\SM_t}$, defined as above with the module $\SN$ replacing $\SM_t$)   for functions defined on $\R^n_\zeta$, which correspond exactly to the spaces $\modulesp^k_{\SM_{t, 0}}$ on the $t$-time slice of $\R^{n+1}_{t, z}$, via the $t$-dependent change of variable $\zeta = z/(2t)$. The spaces $\modulesp^k_{\SN}$ are exactly the function spaces for the final state data considered in \cite{gell2022propagation, gell2023scattering}.

Our main result is in terms of the function spaces just introduced.  Recall $\tilde u$ is defined by \eqref{eq:utilde}. 

\begin{thm}\label{thm:main1}
Suppose that $n = 1,2$ or $3$, that $p$ satisfies \eqref{eq:np}, and that $k \geq 2$. Let the potential function $V(t, z)$ be in $\ang{t}^{-1} L^\infty_t \modulesp^k_{\SM_{t,0}}$, and let $f_+ \in \modulesp^k_{\SN}$ be final state data at $t = +\infty$. Then there is a unique solution $u \in L^\infty(\R_t; \modulesp^k_{\SM_t})$ 
such that $u$ solves the PDE \eqref{eq:NLS}, with final state data $f_+$ in the sense that 
\begin{equation}\label{eq:convergencetofinalstate}
\zeta \mapsto (4\pi it)^{n/2} \tilde u(t, 2t\zeta) \text{ converges to } f_+(\zeta) \text{ as } t \to +\infty  \text{ in } \modulesp^k_{\SN}(\R^n_\zeta).
\end{equation}
In particular this implies pointwise convergence, since $\modulesp^k_{\SN}$ embeds into $L^\infty$. In the same sense, $u$ converges to a final state $f_- \in \modulesp^k_{\SN}$ as $t \to -\infty$. 
Moreover, in the weaker norm of $\modulesp^{k-2}_{\SN}$ we have a rate of convergence:
\begin{equation}\label{eq:rateofconv}\begin{gathered}
 f_\pm(\zeta) - (4\pi it)^{n/2} \tilde u(t, 2t\zeta)  = O(t^{-\gamma})  \text{ in } \modulesp^{k-2}_{\SN}, \quad t \to \pm \infty, \\
 \gamma = 1, \ n=2, 3; \quad \gamma = 1/2, \ n = 1. 
 \end{gathered}\end{equation} 
The solution $u$ is a strong solution in the sense that 
\begin{equation}\label{eq:strongsoln} 
u \in C(\R_t; \modulesp^k_{\SM_t}) \cap C^{0,1}(\R_t; \modulesp^{k-2}_{\SM_t})
\end{equation} 
and if $V$ is, in addition, continuous in time with values in $\modulesp^{k-2}_{\SM_{t,0}}$, then \eqref{eq:strongsoln} can be strengthened to 
 \begin{equation}\label{eq:strongsoln2} 
u \in C(\R_t; \modulesp^k_{\SM_t}) \cap C^{1}(\R_t; \modulesp^{k-2}_{\SM_t}).
\end{equation} 
\end{thm}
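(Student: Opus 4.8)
The plan is to solve the final state problem by running a contraction-mapping/fixed-point argument in the pseudoconformal picture, then transfer the resulting statements back to the original variables. Because the final state data $f_+$ lies in $\modulesp^k_{\SN}$, the natural first move is to set up the problem on a time interval $[T, \infty)$ with $T$ large and to recast \eqref{eq:NLS} as an integral (Duhamel) equation anchored at $t = +\infty$. Concretely, I would write $u = \SP_0 f_+ + (\text{Duhamel term involving } V u \text{ and } \pm|u|^{p-1}u)$, pull out the oscillatory factor $e^{i|z|^2/4t}$ so that the module structure becomes $\SM_{t,0}$ (via Proposition~\ref{prop: pulling out the oscillatory factor}), and then change variables to $\zeta = z/(2t)$ so the relevant space is the fixed space $\modulesp^k_{\SN}$. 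The two key analytic inputs are: (i) the algebra property of $\modulesp^k_{\SN}$ (and the $\SM_{t,0}$ spaces) under multiplication, which controls the monomial nonlinearity $|u|^{p-1}u$ and the product $Vu$ — this is exactly why $p$ is required to be an odd integer — and (ii) the Strichartz-type estimate of Proposition~\ref{prop:Linftybound} together with the $L^r$-decay of Lemma~\ref{lem:NLSLrdecay}, which supply the integrability in $t$ needed to close the estimate for large $T$ and, crucially, to handle \emph{large} data in the defocusing case by combining the energy/mass conservation with the a priori bounds these give.

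For existence and uniqueness in $L^\infty(\R_t; \modulesp^k_{\SM_t})$, I would first produce a solution on $[T, \infty)$ for $T$ large by contraction (where the nonlinear and potential terms are small perturbations because of the decay factors $\ang{t}^{-1}$ from $V$ and the time-decay of $\SP_0 f_+$ in $L^{p+1}$ etc.), obtaining at the same time the convergence statement \eqref{eq:convergencetofinalstate}: the map $\zeta \mapsto (4\pi i t)^{n/2}\tilde u(t, 2t\zeta)$ equals $f_+$ minus the Duhamel remainder, and the remainder tends to $0$ in $\modulesp^k_{\SN}$ because its integrand is integrable in $t$ near $+\infty$ in that norm. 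Then I would extend the solution to all of $\R_t$ using the defocusing global well-posedness theory (here the mass/energy conservation and the subcritical, mass-supercritical range \eqref{eq:np} guarantee no finite-time blowup, and the module regularity is propagated by commuting the vector fields of $\SM_t$ through the equation — each commutator of a generator with $P$ lands back in the module, since $\SM_t$ generates symmetries of the free flow, so the higher-order norms satisfy a Grönwall inequality). Uniqueness follows from the same contraction estimate applied to the difference of two solutions on $[T,\infty)$ and then propagated backward by the well-posedness theory.

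The convergence to a final state $f_-$ at $t = -\infty$ is then immediate: the global solution restricted to $(-\infty, -T]$ is again a solution of the Duhamel equation anchored at $-\infty$ (its leading asymptotic exists by the same argument, run in reverse time), which defines $f_- \in \modulesp^k_{\SN}$. For the rate of convergence \eqref{eq:rateofconv}, I would estimate the Duhamel remainder not in $\modulesp^k_{\SN}$ but in the weaker $\modulesp^{k-2}_{\SN}$: losing two orders of module regularity buys an extra power of $t$-decay on the nonlinear term (via the $L^r$-decay estimate of Lemma~\ref{lem:NLSLrdecay}, which in the scaling-subcritical regime gives $\|u(t)\|_{L^{p+1}}^{p-1} = O(t^{-(p-1)n/2 + \dots})$, combined with the $\ang{t}^{-1}$ decay of $V$), and integrating from $t$ to $\infty$ yields the stated $O(t^{-\gamma})$ with $\gamma = 1$ for $n = 2, 3$ and $\gamma = 1/2$ for $n = 1$ — the dimension-dependence of $\gamma$ tracking exactly the borderline integrability of $t^{-n/2\cdot(p-1)+\text{correction}}$ as a function of $n$ and the allowed $p$.

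Finally, for the strong-solution statements \eqref{eq:strongsoln} and \eqref{eq:strongsoln2}, I would note that $u \in C(\R_t; \modulesp^k_{\SM_t})$ follows from the integral equation and continuity of the Duhamel map (the norms $\modulesp^k_{\SM_t}$ being mutually equivalent for each fixed $t$), while $\partial_t u = i(\Delta + V)u \mp i|u|^{p-1}u$ lies in $\modulesp^{k-2}_{\SM_t}$ because $\Delta$ is a second-order operator in the module generators and the nonlinear/potential terms lie in $\modulesp^k \subset \modulesp^{k-2}$ by the algebra property; this gives Lipschitz-in-time, hence \eqref{eq:strongsoln}, and upgrades to $C^1$ in \eqref{eq:strongsoln2} once $V$ itself is assumed time-continuous into $\modulesp^{k-2}_{\SM_{t,0}}$, since then the right-hand side $i(\Delta + V)u \mp i|u|^{p-1}u$ is a continuous $\modulesp^{k-2}_{\SM_t}$-valued function of $t$. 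I expect the main obstacle to be the global-in-time extension for large data: reconciling the fixed-point argument (which is inherently perturbative and only works near $t = \pm\infty$) with the conservation-law machinery in the middle range, while simultaneously propagating \emph{module} regularity of order $k \geq 2$ — one has to check carefully that commuting the time-dependent vector fields $2tD_{z_j} - z_j$ through the nonlinearity produces only terms already controlled by the module norm, uniformly enough in $t$ to avoid a loss that would break the Grönwall estimate.
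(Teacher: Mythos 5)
Your overall architecture matches the paper's: Duhamel equation anchored at $t=+\infty$, contraction in $L^\infty_t\modulesp^k_{\SM_t}$ on $[S,\infty)$ using the $\ang{t}^{-n/2}$ gain from the multiplication estimate, global extension via mass/energy bounds plus the a priori $L^{p-1}_tL^\infty_z$ estimate of Proposition~\ref{prop:Linftybound} (which, combined with the Moser-type bound $\||u|^{p-1}u\|_{\modulesp^k_{\SM_t}}\lesssim\|u\|_{L^\infty}^{p-1}\|u\|_{\modulesp^k_{\SM_t}}$ of Proposition~\ref{prop: N[u] bound}, closes the bootstrap for the module norm exactly as in Corollary~\ref{cor: global Wk bound} and Proposition~\ref{prop: extend Wk solution}), and the same treatment of $f_-$ and of the continuity statements.

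There is, however, a genuine gap in your argument for the rate of convergence \eqref{eq:rateofconv}. You attribute the loss of two module orders to gaining extra $t$-decay on the nonlinear term, and you derive $\gamma$ from the integrability of the nonlinearity alone. But the Duhamel remainder already decays at rate $O(t^{1-n(p-1)/2}+t^{-n/2})$ in the \emph{full} $\modulesp^k$ norm --- no regularity needs to be sacrificed there --- and for $n=3$, $p=3$ this is $O(t^{-3/2})$, better than the claimed $\gamma=1$. The actual bottleneck for $n\geq 2$ is the \emph{Poisson term}: under the pseudoconformal change of variables, $(4\pi it)^{n/2}e^{-it|\zeta|^2}\SP_0f_+(t,2t\zeta)=e^{i\tb{t}\Delta}f_+$ with $\tb{t}=1/(4t)$, and $e^{i\tb{t}\Delta}f_+\to f_+$ in $\modulesp^k_{\SN}$ with \emph{no rate} in general. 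The paper obtains the rate by writing $e^{i\tb{t}\Delta}-\Id=i\bigl(\tb{t}^{-1}\int_0^{\tb{t}}e^{is\Delta}ds\bigr)\tb{t}\Delta$ and using $\Delta\in\Psi^{2,2}_{\oc}$ to get $\|(e^{i\tb{t}\Delta}-\Id)f\|_{L^2}\leq C\tb{t}\,\|f\|_{H^{2,2}_{\oc}}$, then interpolating; this is precisely where the drop from $\modulesp^k_{\SN}$ to $\modulesp^{k-2}_{\SN}$ is spent. Without this step your argument cannot produce any rate for the difference $(4\pi it)^{n/2}\tilde u(t,2t\zeta)-f_+(\zeta)$, only for the Duhamel remainder. (Relatedly, for $n=1$ the rate $\gamma=1/2$ comes from the potential term $Vu$, contributing $t^{-n/2}=t^{-1/2}$, not from the nonlinearity, which for $p\geq5$ gives $t^{-1}$ or better.) The rest of your outline is sound, provided the propagation of module regularity through the middle time range is executed via the Duhamel formula and Proposition~\ref{prop: N[u] bound} rather than by literally commuting vector fields through the nonlinearity.
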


\begin{rmk} The condition on the potential may look strange, but from the point of view of linearization it is quite natural.  Proposition~\ref{prop: module regularity, multiplication rule} implies that if $u \in L^\infty_t \modulesp^k_{\SM_t}$ and $p$ is an odd integer $\geq 3$, then $|u|^{p-1} \in \ang{t}^{-n(p-2)/2} \modulesp^k_{\SM_{t,0}}$, which is contained in the function space $\ang{t}^{-1} L^\infty_t \modulesp^k_{\SM_{t,0}}$ from which the potential $V$ is selected. That is, the condition on the potential is such that it captures the potential functions that will arise from treating the equation as a linear equation with potential $|u|^{p-1}$. 
\end{rmk}

\begin{rmk} On the other hand, we point out that time-independent potentials (other than zero) are not in the function space $\modulesp^k_{\SM_{t,0}}$. Rather, potentials in this class display dispersion and decay properties analogous to scattering solutions of NLS (or powers of scattering solutions). 
\end{rmk}

\begin{rmk} 
We clarify the meaning of \eqref{eq:strongsoln}, given that the function space $\modulesp^k_{\SM_t}$ changes with time. As observed above, the spaces $\modulesp^k_{\SM_t}$ have the same underlying set, and the norms for different values of $t$ are equivalent (although not uniformly so, as $|t| \to \infty$). So, the statement \eqref{eq:strongsoln} is equivalent to 
$$
u \in C(\R_t; \modulesp^k_{\SM_0}) \cap C^{0,1}(\R_t; \modulesp^{k-2}_{\SM_0})
$$
with values in a fixed function space. 
\end{rmk}

\begin{rmk} See Theorem~\ref{thm:metric} for a generalization where we allow a metric perturbation that is compactly supported in spacetime, and such that the metric $g_t$ is nontrapping for each $t$. 
\end{rmk}

\subsection{Related literature}
There has been a vast literature on the Cauchy problem of the nonlinear Schr\"odinger equation. We make no attempt to  survey well-posedness results in low regularity, or on spaces other than $\mathbb{R}^n$ in this short introduction. Instead we focus on results in $H^1$ or higher regularity that is more closely related to our setting.
 
The Cauchy problem of the nonlinear Schr\"odinger equation in $H^1$ is studied in \cite{ginibre-velo1978NLS-D123}\cite{ginibre-velo1979NLS1}\cite{ginibre-velo1979NLS2} in this setting with `local' nonlinearity (compared with Hartree-type nonlinearity involving convolution, which is non-local). The global well-posedness of cubic NLS on $\mathbb{R}^{1+3}$ is shown in \cite{ginibre1985scattering} and one always have scattering phenomena in this case. 

When the potential is time dependent, the wave operator is constructed in \cite{howland1974stationary}\cite{davies1974timedependent} with certain decay in time condition. In the setting where the potential is majorized by a time-independent potential and has small Rollnik type norm, a unitary wave operator is shown to exist in \cite{howland1980born}, which is analogues to \cite{kato1966waveoperator} in the time-independent setting. The existence of wave operator with a time-dependent long-range (in space) potential is settled in \cite{kitada-yajima-timedependent-longrange}. The strongly continuous property of the evolution group for potentials in $L^q_tL^p_z, 0 \leq \frac{1}{q} < 1- \frac{n}{2p}$ is shown in \cite{yajima1987evolutiongroup}. 
The pointwise and Stichartz type decay estimate of evolution with time-dependent and rough potential is studied in \cite{Rodnianski-Schlag2004decay}. A large class of time-dependent potentials is treated in \cite{Soffer-Wu2203.00724, Soffer-Wu2211.00500}.

Theorem \ref{thm:main1} can be interpreted as a statement about the asymptotic completeness of the wave operator (in fact its adjoint) associated to our NLS. This is because the free evolution operator is an isometry (modulo a constant multiple) on those module regularity spaces as well. Suppose we denote the evolution operator of $P=D_t+\Delta_g+V$ and $P_0=D_t+\Delta_0$, with $\Delta_0$ being the positive Laplacian of the Euclidean metric, by $U(t),U_0(t)$ respectively, then our final state problem is aiming to show that for any $f$ in the module regularity spaces introduced in Section \ref{sec: module regularity spaces}, we can find a solution $u(t)$ to \eqref{eq:NLS} such that
\begin{equation} \label{eq: convergenec, IVP}
||u(t)- U_0(t)\mathcal{F}f||_{\modulesp^k_{\mathcal{M}_t} } \rightarrow 0,  t \to \infty,
\end{equation}
where $\mathcal{F}$ is the partial Fourier transform  we used the fact that $\mathcal{P}_0f= U_0(t)\mathcal{F}f$. Suppose we think of $u(t)$ as the solution to the initial value problem with initial value $u(0)$, and notice that $U_0(t)$ is an isometry on module regularity spaces (see \ref{eq:isometry}), then we know that \eqref{eq: convergenec, IVP} is equivalent to 
\begin{equation} \label{eq: convergenec, IVP}
||U_0(-t)U(t)u(0) - \mathcal{F}f||_{\modulesp^k_{\mathcal{M}_t} } \rightarrow 0,  t \to \infty,
\end{equation}
which is the completeness of the adjoint of the wave operator. In addition, the defocusing sign and the decay assumption on the potential prevents the existence of bound states, hence there is no projection to the absolutely continuous spectrum needed as usual in the statement of the completeness.

Our method bears similarity to Klainerman's commuting vector field approach to wave equations \cite{klainerman1985vec-wave}, which has been applied to the Schr\"odinger equation in \cite{constantin1990decay}, which proved dispersive estimates, and more recently in \cite{wong2018vec}, where dispersive estimates and Strichartz estimates are derived in a Besov type space.
The norm appearing in the analogue of the Klainerman-Sobolev inequality for the Schr\"odinger equation in \cite[Lemma~11]{wong2018vec} has some similarities to the module regularity norm in this current work, but with the difference that we take generators $z_jD_{z_i}-z_iD_{z_j}$ of  rotations into consideration.

\subsection{Links with the articles \cite{gell2022propagation} and \cite{gell2023scattering}}
This paper is intended to be a bridge between the traditional works on NLS, based on a classical PDE approach using some harmonic analysis, and a more recent microlocal approach due to Gell-Redman, Gomes and the first-named author \cite{gell2022propagation, gell2023scattering}. In these papers, the inhomogeneous problem $Pu = f$ was studied on spacetime and applied to the final state problem. One main idea in these papers is to work on function spaces over spacetime, $\R^{n+1}_{t,z}$, throughout. Pairs of Hilbert spaces, $\SX^{s, \mathrm{r_\pm}; k}(\R^{n+1})$ and $\SY^{s-1, \mathrm{r_\pm}+1; k}(\R^{n+1})$ were found (where $s$ is a Sobolev exponent, $\mathrm{r}$ is a spacetime decay order and $k$ is the module regularity order with respect to a spacetime module $\mathcal{D}$ analogous to $\mathcal{M}_t$ above) between which $P$ acts invertibly. This gives rise to two inverses $P_\pm^{-1}$ which are the advanced ($+$) and retarded ($-$) propagator. Then the final state problem was solved by a two step-process: we let $u_0$ be the solution to the final state problem for the free operator $(D_t + \Delta_0) u = 0$, which is given by $u_0 = \Poip f$, where $\Poip$ is the operator
$$
f \mapsto (2\pi)^{-n} \int e^{i (z \cdot \zeta + t |\zeta|^2)} f(\zeta) \, d\zeta.
$$
In the second step, we correct $u_0$ to the solution of $Pu = (D_t + \Delta_g + V)u = 0$ by letting $u = u_0 + P_-^{-1} (Pu)$. Choosing the correct inverse $P_-^{-1}$ (instead of $P_+^{-1}$) ensures that the final state data as $t \to +\infty$ is not changed, thus $u$ has the correct final state data as well as solving the correct PDE. 

The present work uses no microlocal analysis whatsoever, but relates to \cite{gell2023scattering} through the use of modules. This is clearest when it comes to measuring the regularity of the final state data $f$, where exactly the same module $\SN$ is used in all three articles. However, in the bulk the use of modules differs here from  \cite{gell2023scattering}, since here all our function spaces are for functions defined on a time slice $\R^n_z$, rather than spacetime $\R^{n+1}_{t,z}$. While \cite{gell2023scattering} used the full symmetry group of the free Schr\"odinger operator, which includes the generator of time-translations, $D_t$ as well as the generator of parabolic dilations, $2tD_t + z \cdot D_z$, here we only use those generators that are defined on a single time-slice, thus do not involve $t$-derivatives, that is, those in \eqref{eq: generator Mt}. In effect, what we do is take the module $\SN$ for the final state data and extend it into the interior of $\R^{n+1}$, where $t$ is finite, essentially by passing to the pseudoconformal variable $\zeta$ instead of $z$, as above. Our function spaces then are $L^p$ functions of time, with values in a module regularity space in $z$. This is much closer to the traditional approach for treating the solution $u(t, z)$. It allows us to exploit  Strichartz estimates, as well as the positive-definite energy for defocusing NLS, which seems difficult to achieve in the framework of \cite{gell2022propagation, gell2023scattering}, and this is why we are able to treat the large data problem in the present article. 

The norm on the final state data $f$, or on the pseudoconformal transform of the solution $u$ at fixed time, is related to a weighted Sobolev norm of $f$, but it is not with respect to the usual constant-coefficient vector fields, but instead with respect to `1-cusp' vector fields. In polar coordinates near infinity, these take the form $r^{-1} \partial_r$ and $r^{-1} \partial_\theta$. Compared to constant coefficient vector fields, which would be bounded multiples of $\partial_r$ and $r^{-1} \partial_\theta$, we see that the radial vector field scales differently to the angular vector fields. 
One of the results of \cite{gell2022propagation}, showing an \emph{exact} correspondence between the order of $\SN$-module regularity of $f$, and the microlocal regularity of the solution $u$ on the characteristic set and away from the radial sets, 
suggests that this weighted 1-cusp norm is particularly natural for the final state data. Further evidence for the naturalness of this norm on the final state data will appear in a forthcoming article, where the authors will show that, in the setting of Theorem~\ref{thm:metric} with compactly supported potential, the scattering map $f_- \to f_+$ is a 1-cusp Fourier integral operator. While both these just-mentioned results are heavily microlocal, the more traditional PDE-type results of the present paper provide further evidence for the belief that this norm is a particularly suitable one for studying the Schr\"odinger equation.

\subsection{Outline of the proof, and structure of the paper}
Sections 2 and 3 contain preliminary material, on the 1-cusp structure, and on module regularity spaces. We prove a multiplication result for functions in our module regularity spaces, which contains a decaying factor $\ang{t}^{-n/2}$. We can view this as the `dispersive estimate' in our approach.  We also prove a crucial Gagliardo-Nirenberg-type inequality, Proposition~\ref{prop: N[u] bound},  for products of elements of module regularity spaces, which shows that we only need to control a priori finite energy solutions $u$ in $L^{p-1}_t L^\infty_z$ (globally in time). This global $L^{p-1}_t L^\infty_z$ bound is a standard consequence of  Strichartz estimates, see for example \cite[Corollary 7.3.4]{cazenave2003semilinear}, for $p$ and $n$ satisfying \eqref{eq:np}. 

The proof of Theorem~\ref{thm:main1} starts in Section 4. In that section, we show that a solution exists on a time interval $[T, \infty)$ for sufficiently large $T$ (which will go to infinity as the norm of $f$ becomes large). This is done in via the contraction mapping theorem in the space $L^\infty_t \modulesp^k_{\SM_t}$. This argument applies to all dimensions and for $p \geq 3$ for $n \geq 2$, or $p \geq 5$ for $n=1$. It also applies to focusing as well as defocusing NLS. We prove the convergence statements in Theorem~\ref{thm:main1} for $t \to +\infty$. 

In Section 5 we use a global-in-time Strichartz estimate to show that the solution extends to all $t \in \R$, and show that there is a final state $f_-$ as $t \to -\infty$. Moreover, $f_-$ is in the same space as $f_+$, and we show the same convergence to the final state for $t \to -\infty$ as we did for $t \to +\infty$. 

The proof so far described is relatively straightforward as we exploit the exact symmetries of the propagator $e^{it\Delta_0}$, where $\Delta_0$ is the flat (positive) Laplacian. 
In Section 6, we consider the case of a metric perturbation, albeit a mild one which is compactly supported both in space and time. This assumption is more for convenience than necessity; it allows us to use results of Staffilani-Tataru \cite{staffilani2002strichartz} on the Schr\"odinger equation with variable coefficients, which relies on fundamental estimates of Doi \cite{Doi1994cauchy, Doi1996remarks}. 


\section{1-cusp structure}
We briefly describe the 1-cusp pseudodifferential algebra here. For
detailed construction and explanation, see \cite{Zachos:Thesis}\cite[Section~2]{zachos2022inverting}\cite[Section~2]{jia2024tensorial}. 

Let $M$ be a manifold with boundary with chosen boundary defining function $x$. We recall that $\mathcal{V}_{\mathrm{cu}}$ is, by definition, the space of `cusp vector fields' consisting of smooth vector fields $V$ tangent to $\partial M$ such that $Vx = O(x^2)$. This is a Lie algebra of vector fields (under commutators) and it depends on the choice of $x$ only modulo $O(x^2)$. Localized away from the boundary, it is nothing other than the set of smooth vector fields on the interior $M \setminus \partial M$. 

In local coordinates $x,y_1,...,y_{n-1}$, with $y_1,...,y_{n-1}$ parametrizing $\partial M$, they have the form
\begin{align*}
a_0(x,y)x^2D_x + \sum_{j=1}^{n-1} a_j(x,y)D_{y_j},
\end{align*}
where $a_j$ are smooth functions of their variables. Then we define the 1-cusp vector fields as cusp vector fields with one extra vanishing order at the boundary:
\begin{align*}
\mathcal{V}_{\mathrm{1c}}(M): = x\mathcal{V}_{\mathrm{sc}}(M).
\end{align*}
In local coordinates, they have the form
\begin{align*}
a_0(x,y)x^3D_x + \sum_{j=1}^{n-1} a_j(x,y)xD_{y_j}.
\end{align*}
Over $C^\infty(M)$, this generates the algebra of 1-cusp differential
operators as (locally) finite sums of finite products of these.

Polynomials of those vector fields form a differential operator algebra. A typical 1-cusp differential operator is thus of the form
\begin{align*}
\sum_{\alpha+|\beta| \leq m} a_{\alpha\beta}(x,y)(x^3D_x)^\alpha(xD_y)^\beta.
\end{align*}
Correspondingly, the frame of the 1-cusp cotangent bundle, which is denoted by $^{\mathrm{1c}}T^*X$ is given by
\begin{align*}
\frac{dx}{x^3},\frac{dy_j}{x}.
\end{align*}
Let coordinates of this bundle in this frame be $\xi_{\mathrm{1c}},\eta_{\mathrm{1c}}$; that is, $\xi_{\mathrm{1c}}$ is the symbol of $x^3 D_x$ and $\eta_{\mathrm{1c},j}$ is the symbol of $x D_{y_j}$. Then the symbol class $S^{m,l}_{\mathrm{1c} }(M)$ is defined to be the class of symbols in this coordinate system such that:
\begin{align*}
|(xD_x)^\alpha D_y^\beta D_{\xi_{\mathrm{1c}}}^\gamma D_{\eta_{\mathrm{1c}}}^\delta a(x,y,\xi_{\mathrm{1c}},\eta_{\mathrm{1c}})| \leq C_{\alpha\beta\gamma\delta} \la (\xi_{\mathrm{1c}},\eta_{\mathrm{1c}}) \ra^{m-\gamma-|\delta|}x^{-l}.
\end{align*}
Such symbols can then be quantized to 1-cusp pseudodifferential operators:
\begin{multline} \label{eq: 1-cusp quantization definition}
Au(x,y) =  \\ (2\pi)^{-n}  \int e^{i (\frac{x-x'}{x^3} \xi_{\mathrm{1c}} + \frac{y-y'}{x} \cdot \eta_{\mathrm{1c}})} a(x,y,\xi_{\mathrm{1c}},\eta_{\mathrm{1c}}) u(x',y') \frac{dx'dy'}{(x')^{n+2}}d\xi_{\mathrm{1c}}d\eta_{\mathrm{1c}}.
\end{multline}
Notice that, when $M$ is the radial compactification of $\R^n$, then the Euclidean Laplacian $\Delta$ is an element of $\Psi^{2,2}_{\oc}(M)$:
\begin{equation}\label{eq:Delta1c}
\Delta \in \Psi^{2,2}_{\oc}(\overline{\R^n}). 
\end{equation}

Next we define the ellipticity of symbols and operators.
\begin{defn}
A symbol $a \in S^{m,l}_{\mathrm{1c}}(M)$ is called elliptic if it is elliptic, in the usual sense, away from the boundary of $M$, and near the boundary, using coordinates $(x, y)$ as above and 1-cusp fibre coordinates $\xi_{\mathrm{1c}},\eta_{\mathrm{1c}}$,
\begin{multline}
\exists \, c, C > 0 \text{ such that } \la (\xi_{\mathrm{1c}},\eta_{\mathrm{1c}}) \ra \geq C \text{ or } x \leq \epsilon \\ \implies  |a(x,y,\xi_{\mathrm{1c}},\eta_{\mathrm{1c}})| \geq cx^{-l} \la (\xi_{\mathrm{1c}},\eta_{\mathrm{1c}}) \ra^m  ;
\end{multline}
its quantization $A$ is also called elliptic in this case.
\end{defn}
Under this condition, see \cite[Section~2.5]{zachos2022inverting} (which gives an even stronger semiclassical statement), its quantization $A$ has a parametrix $B \in \Psi_{\mathrm{1c}}^{-m,-l}(M)$ such that
\begin{align*}
AB-\Id, BA-\Id \in \Psi_{\mathrm{1c}}^{-\infty,-\infty}(M).
\end{align*}

We can now define the 1-cusp Sobolev spaces
$H^{s,r}_{ \mathrm{1c} }(M)$, for instance for $s \geq 0$ by choosing
$A \in \Psi_{\mathrm{1c}}^{s,0}(M)$ elliptic, and demanding
$$
u\in H^{s,r}_{\mathrm{1c}}(M) \Leftrightarrow u \in x^r L^2(M)\ \text{and}\ Au\in x^r L^2(M);
$$
here $L^2(M)$ is the $L^2$ space relative to a fixed polynomially
weighted density. In our setting, $M$ will be the radial compactification of a Euclidean space $\R^n$, so we take the Euclidean density,  which in the coordinates $(x, y)$ takes the form of a smooth nonvanishing multiple of $\frac{dx\,dy}{x^{n+1}}$. Equivalently, for $s\geq 0$
integer,
\begin{align} \label{eq: 1-cusp norm definition}
\|u\|^2_{H^{s,r}_{\mathrm{1c}}}= \|x^{-r}u\|_{L^2}^2+\sum_{j+|\alpha|\leq s}\|x^{-r}(x^3D_x)^j(xD_y)^\alpha\|_{L^2}^2
\end{align}
with the spaces for other $s$ defined via interpolation and duality. Localized away from the boundary, this space coincides with the standard Sobolev space $H^s(M)$. 

\begin{prop} For $k>\frac{n}{2},r_1,r_2 \in \R$, we have
\begin{align} \label{eq: 1-cusp multiplication}
||uv||_{ H_{\oc}^{k,r_1+r_2} } \lesssim ||u||_{ H_{\oc}^{k,r_1} }
||v||_{ H_{\oc}^{k,r_2} }.
\end{align}
\end{prop}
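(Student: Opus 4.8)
The plan is to prove the multiplication estimate \eqref{eq: 1-cusp multiplication} by reducing to a localized computation near the boundary and away from it, and in the boundary region, by exploiting the fact that the 1-cusp algebra is, up to the rescaling $x \mapsto \log x$ or an exponential change of variables, modeled on a translation-invariant structure on a cylinder. Away from the boundary the statement is the classical fact that $H^k$ is an algebra for $k > n/2$, so using a partition of unity $1 = \chi_0 + \chi_\infty$ with $\chi_\infty$ supported near $\partial M$ and $\chi_0$ compactly supported in the interior, and noting that $H^{k,r}_{\oc}$ coincides with $H^k$ on the support of $\chi_0$ with equivalent norms, it remains only to handle the piece where both factors are supported near $\partial M$.

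Near the boundary I would introduce the variable $s$ with $ds = dx/x^2$, i.e.\ $s = -1/x$ (so $s \to -\infty$ as $x \to 0$), together with $y$; then the cusp vector field $x^2 D_x$ becomes $D_s$ and the 1-cusp vector fields $x^3 D_x$, $x D_{y_j}$ become $x D_s$ and $x D_{y_j}$, i.e.\ $|s|^{-1} D_s$ and $|s|^{-1} D_{y_j}$ in the new coordinate. The density $x^{-n-1}\,dx\,dy$ becomes $|s|^{n-1}\,ds\,dy$. Thus, writing $w = |s|^{-r} u$, membership $u \in H^{k,r}_{\oc}$ is equivalent (modulo lower-order terms coming from differentiating the weight $|s|^{-r}$, which are harmless because each derivative of $|s|^{-r}$ gains a factor $|s|^{-1}$) to $w$ and all products of up to $k$ of the vector fields $|s|^{-1}D_s$, $|s|^{-1}D_{y_j}$ applied to $w$ lying in $L^2(|s|^{n-1}\,ds\,dy)$. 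The key point is then a scaled Sobolev embedding and algebra property: on the region $|s| \sim R$ (dyadic in $R$), the operators $|s|^{-1}D_s, |s|^{-1}D_{y_j}$ are comparable to $R^{-1}$ times ordinary derivatives, and the measure is comparable to $R^{n-1}\,ds\,dy$; rescaling $(s,y) \mapsto (s/R, y/R)$ — which also rescales the measure correctly to $R^n$ times Lebesgue — turns the $\oc$-norm on the dyadic shell into a standard $H^k$ norm with all factors of $R$ cancelling. Hence on each shell the classical algebra property $\|fg\|_{H^k} \lesssim \|f\|_{H^k}\|g\|_{H^k}$ for $k > n/2$ applies with a constant uniform in $R$, and summing over dyadic shells (using that the $H^k$-norm squared over the full boundary collar is comparable to the sum over shells, and that the bilinear bound plus Cauchy–Schwarz in $R$ controls the sum) yields \eqref{eq: 1-cusp multiplication} in the boundary region.

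I expect the main obstacle to be bookkeeping the error terms generated when the weight $|s|^{-r}$ (equivalently $x^r$) is commuted through the 1-cusp vector fields: each such commutator produces a term with one fewer derivative but an extra decaying factor, and one must check these are all absorbable and that they do not interfere with the dyadic rescaling argument — this is routine but needs care because $r_1, r_2$ are arbitrary real numbers and the weights on the two factors must combine to exactly $x^{r_1+r_2}$. A cleaner alternative, which I would pursue if the direct approach gets unwieldy, is to realize the multiplication map $H^{k,r_1}_{\oc} \otimes H^{k,r_2}_{\oc} \to H^{k,r_1+r_2}_{\oc}$ via the 1-cusp pseudodifferential calculus: pick an elliptic $A \in \Psi^{k,0}_{\oc}$ and write $\|uv\|_{H^{k,r_1+r_2}_{\oc}} \sim \|x^{-r_1-r_2} A(uv)\|_{L^2}$, then expand $A(uv)$ using that $A$ is a differential-operator-like object of order $k$, Leibniz, and the sub-multiplicativity of the weights, reducing to bounding products $(B_1 u)(B_2 v)$ with $B_1 \in \Psi^{k_1,0}_{\oc}$, $B_2 \in \Psi^{k_2,0}_{\oc}$, $k_1 + k_2 \le k$, and invoking the scaled Sobolev embedding $H^{k',0}_{\oc} \hookrightarrow x^{-n/2}L^\infty$ for $k' > n/2$ — this is the $\oc$ analogue of the standard dyadic Gagliardo–Nirenberg proof of the algebra property, now for the $\oc$-weighted density.
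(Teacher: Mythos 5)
Your overall strategy --- splitting into an interior piece, where the statement is the classical algebra property of $H^k$ for $k>n/2$, and a boundary-collar piece, where one exploits the approximate translation/scaling structure of the 1-cusp algebra --- is the same as the paper's. The paper implements the boundary step with a single global change of variables rather than a dyadic decomposition: the phase of the ``1-cusp Fourier transform'' is linear in $(X,Y)=(x^{-2},x^{-1}y)$, so $\Fic$ is the ordinary Fourier transform in those variables, the 1-cusp vector fields become constant-coefficient, and the usual Fourier-side proof of the algebra property is quoted verbatim. Your dyadic rescaling is a legitimate substitute for that step in principle.

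The gap is the assertion that ``all factors of $R$ cancel''; they do not, and the discrepancy is exactly the weight bookkeeping you flagged as the main obstacle. On the shell $x\sim R^{-1}$ the normalizing variables are $\sigma=-x^{-2}/2$ and $\eta=y/x$ (so that $x^3D_x=D_\sigma$ and $xD_{y}=D_{\eta}$ up to $O(x^2)$ errors), and in these variables the Euclidean density $x^{-(n+1)}dx\,dy$ underlying \eqref{eq: 1-cusp norm definition} becomes $x\,d\sigma\,d\eta\approx R^{-1}\,d\sigma\,d\eta$, not $d\sigma\,d\eta$. Hence $\|u\|_{H^{k,0}_{\oc}(\mathrm{shell})}\approx R^{-1/2}\|\tilde u\|_{H^k(d\sigma\,d\eta)}$, and applying the classical algebra property shell by shell gives only $\|uv\|_{H^{k,0}_{\oc}(\mathrm{shell})}\lesssim R^{1/2}\,\|u\|_{H^{k,0}_{\oc}(\mathrm{shell})}\|v\|_{H^{k,0}_{\oc}(\mathrm{shell})}$, i.e.\ a loss of $x^{-1/2}$. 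This loss is genuine: taking $u=v$ equal to a bump of unit size in $(\sigma,\eta)$ located at $x\sim R^{-1}$, one computes $\|u\|_{H^{k,0}_{\oc}}\approx R^{-1/2}$ while $\|u^2\|_{H^{k,0}_{\oc}}\approx R^{-1/2}\gg R^{-1}$, so \eqref{eq: 1-cusp multiplication} with $r_1=r_2=0$ fails as stated. What your argument (done carefully) actually proves, and what is sharp, is $\|uv\|_{H^{k,r_1+r_2-1/2}_{\oc}}\lesssim\|u\|_{H^{k,r_1}_{\oc}}\|v\|_{H^{k,r_2}_{\oc}}$ --- equivalently, the stated estimate holds if the $L^2$ spaces are taken with respect to the 1-cusp density $x^{-(n+2)}dx\,dy$ rather than the Euclidean one. (The paper's proof elides the same point: $\Fic$ is an isometry on $L^2(x^{-(n+2)}dx\,dy)$, one power of $x$ away from the density in \eqref{eq: 1-cusp norm definition}. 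The corrected estimate suffices for every application in the paper, e.g.\ Proposition~\ref{prop: module regularity, multiplication rule} needs a weight gain of only $k\le 2k-1/2$.) Your fallback route has the same defect in another guise: the sharp embedding is $H^{k',0}_{\oc}\hookrightarrow x^{-1/2}L^\infty$ for $k'>n/2$, not into $L^\infty$, so the product of two elements of $H^{k,0}_{\oc}$ can at best be expected to land in $x^{-1/2}H^{k,0}_{\oc}$; the exponent $x^{-n/2}$ you quote is not the right one (it is true but lossy for $n\ge 2$) and would give an even weaker conclusion.
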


\begin{proof}
By considering $x^{-r_1}u,x^{-r_2}v$, we may reduce to the case where $r_1=r_2=0$. Moreover, localized away from the boundary, the result is just the standard algebra property of Sobolev spaces $H^s$ for $s > n/2$. So we may reduce to the case that both $u$ and $v$ are supported in a small collar neighbourhood of the boundary. Localizing further we may work in a single coordinate patch $(x, y)$ as above. 

We then consider the `1-cusp Fourier transform' $\Fic u$ defined by 
\begin{equation}
\Fic u (\xic, \etac) = \int e^{-i(x^{-2} \xic + x^{-1} y \cdot \etac)} u(x, y) \frac{dx  \, dy}{x^{n+2}}.
\end{equation}

An analogous proof to that for the standard Fourier transform shows that $\Fic$ is an isometry with inverse 
\begin{equation}
\Fic^{-1} G(x, y) := (2\pi)^{-n} \int \int e^{i(x^{-2} \xic + x^{-1} y \cdot \etac)} G(\xic, \etac) \, d\xic \, d\etac .
\end{equation}
Moreover, $\Fic$ is an isometry from $L^2(x^{-(n+2)}dx dy)$ to $L^2(d\xic d\etac)$. 

We compute
\begin{equation}\begin{aligned}
\Fic \Big( x D_{y_j} u \Big)(\xic, \etac) &= \eta_{\mathrm{1c}, j} \Fic u(\xic, \etac), \\
\Fic \Big( (x^3 D_x - x^2 y \cdot D_y - i x^2) u \Big) (\xic, \etac) &= -2 \xic \Fic u(\xic, \etac).
\end{aligned}\label{eq:Fic-derivs}\end{equation}
It follows from \eqref{eq:Fic-derivs} that the norm $||u||_{H^{k}_{\oc}}$, for $u$ supported in an $(x, y)$-coordinate patch in a small collar neighbourhood of the boundary,  is equivalent to a weighted norm of $\Fic u$:
\begin{align*}
||u||_{H^{k}_{\oc}} \approx ||\la (\xi_{\oc},\eta_{\oc}) \ra^k \hat{u}||_{L^2},
\end{align*}
where $\approx$ means they can bound each other up to a constant. Then the proof, via Fourier transform, of the algebra property of standard Sobolev spaces can be used verbatim to obtain the result. 
\end{proof}


\section{Module regularity spaces} \label{sec: module regularity spaces}
In this section we discuss the module regularity spaces we need for the contraction mapping argument later.

\subsection{Module regularity spaces and their properties} 
We begin with the notion of a test module of pseudodifferential operators \cite{HMV2004}. 
\begin{defn} 
A test module is a vector subspace of $\Psisc^{1,1}(\R^{n})$, the scattering pseudodifferential operators over $\R^n$, that is a module over $\Psisc^{0,0}(\R^n)$, closed under commutators, and finitely generated over $\Psi^{0,0}(\R^{n})$. 

A time-foliated test module is a family of test modules ( $\subset  \Psisc^{1,1}(\R^{n})$) parametrized by $t$, where the Lie bracket is the commutator, and for each fixed time $t$, it is equipped with a generator (depending on $t$)  set over $\Psisc^{0,0}(\R^{n})$.
\end{defn}

To alleviate the burden of notations, we will denote the module and its generator set by the same notation.
We will mainly use spaces with regularity measured by following two families of generator sets:
\begin{equation} \label{eq: generator 01}
\begin{aligned}
 \mathcal{M}_t &: \{\Id, z_jD_{z_i}-z_iD_{z_j}, 2tD_{z_j}-z_j, D_{z_j}\}\\
 \mathcal{M}_{t,0} &: \{\Id, z_jD_{z_i}-z_iD_{z_j}, 2tD_{z_j}, D_{z_j}+\frac{z_j}{2t}\}
\end{aligned} \ ,  \quad |t| \geq \frac1{2}. 
\end{equation}

More generally, let $O$ be a generator set of a (time-foliated) testing module, next we define $\modulesp^k_{O}$, the module regularity space with respect to the module generated by $O$. 
Concretely, define $O^{(k)}$ to be the set of products of generators in $O$, of degree at most $k$.
Then for non-negative integer $k$, we define $\modulesp^k_{O}$ to be the subspace of $L^2(\R^n)$ such that $Au \in L^2(\R^n)$ for all $A \in O^{(k)}$, equipped with the norm
\begin{align}
||u||_{\modulesp^k_O}^2:= \sum_{A \in O^{(k)}} ||Au||^2_{L^2(\R^n)}.
\end{align}
For the testing module case, this gives a complete normed function space; while for the time-foliated testing module case, this gives a family of complete normed function spaces, parametrized by $t$. We will use 
\begin{align}
L^p_t\modulesp^k_O(\R^{n+1}),
\end{align}
where $p = \infty$ is allowed, to denote the space of functions on $\R^{n+1}$ such that $||u(t)||_{\modulesp^k_O} \in L^p(\R)$ as a function of $t$, with norm given by the $L^p$ norm of $||u(t)||_{\modulesp^k_O}$. The case for some time intervals $I$ instead of the entire $\R$ is defined in the same manner.

The $\modulesp^k_{\mathcal{M}_{t}}$-norm and the $\modulesp^k_{\mathcal{M}_{t,0}}$-norm are equivalent, after removing the correct oscillatory factor:

\begin{prop} \label{prop: pulling out the oscillatory factor}
Suppose 
\begin{align} \label{eq: tilde u definition}
\tilde u = e^{-i\frac{|z|^2}{4t}} u,
\end{align}
then 
\begin{align}
\| u(\cdot , t) \|_{\modulesp^k_{\mathcal{M}_{t}}} = \| \tilde{u}(\cdot , t) \|_{\modulesp^k_{\mathcal{M}_{t,0}}},
\end{align}
when $|t|>1/2$.
\end{prop}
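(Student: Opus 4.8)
The plan is to show that conjugation by the oscillatory factor $e^{i|z|^2/4t}$ maps each generator of $\SM_{t}$ to the corresponding generator of $\SM_{t,0}$, and then to propagate this through products of generators. Concretely, write $\phi = \phi_t = |z|^2/(4t)$, so that $\tilde u = e^{-i\phi} u$ and $u = e^{i\phi}\tilde u$. For any (differential) operator $A$ we have $A(e^{i\phi} \tilde u) = e^{i\phi}\,(e^{-i\phi} A e^{i\phi})\tilde u$, so the claim reduces to computing the conjugated operators $A_0 := e^{-i\phi} A e^{i\phi}$ for $A$ ranging over the generator set $\SM_t$, and checking that $\{A_0 : A \in \SM_t\}$ equals $\SM_{t,0}$ as stated in \eqref{eq: generator 01}.

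The key computation is that conjugation by $e^{i\phi}$ sends $D_{z_j}$ to $D_{z_j} + (\partial_{z_j}\phi) = D_{z_j} + z_j/(2t)$, using $e^{-i\phi} D_{z_j} e^{i\phi} = D_{z_j} + z_j/(2t)$ (since $D_{z_j} = -i\partial_{z_j}$ and $-i\partial_{z_j}(e^{i\phi}) = (\partial_{z_j}\phi)e^{i\phi}$). It fixes multiplication operators, so $z_j \mapsto z_j$ and $\Id \mapsto \Id$. For the rotation generators $z_j D_{z_i} - z_i D_{z_j}$, conjugation adds $z_j (\partial_{z_i}\phi) - z_i(\partial_{z_j}\phi) = z_j z_i/(2t) - z_i z_j /(2t) = 0$, since $\phi$ is radial; hence these are fixed. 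Finally $2tD_{z_j} - z_j \mapsto 2t(D_{z_j} + z_j/(2t)) - z_j = 2tD_{z_j}$. Reading this off, the generator set $\SM_t$ is mapped exactly onto $\SM_{t,0}$, element by element; note this is a bijection of generator sets, so it also respects the enumeration used in the sums \eqref{eq:WMt}, \eqref{eq:WMt0}.

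To conclude the norm identity, observe that for any word $A_1 \cdots A_k$ with $A_\ell \in \SM_t$ we have
\begin{equation}
A_1 \cdots A_k (e^{i\phi}\tilde u) = e^{i\phi}\,(e^{-i\phi}A_1 e^{i\phi})\cdots (e^{-i\phi}A_k e^{i\phi})\,\tilde u = e^{i\phi}\,(A_1)_0 \cdots (A_k)_0\,\tilde u,
\end{equation}
where each $(A_\ell)_0 \in \SM_{t,0}$ by the computation above, and conversely every word in the generators of $\SM_{t,0}$ arises this way. Since $|e^{i\phi}| = 1$ pointwise, $\|A_1\cdots A_k u\|_{L^2} = \|(A_1)_0\cdots(A_k)_0 \tilde u\|_{L^2}$, and summing the squares over all words of length $\leq k$ (equivalently, over $O^{(k)}$ for $O = \SM_t$ versus $O = \SM_{t,0}$) gives $\|u\|_{\modulesp^k_{\SM_t}} = \|\tilde u\|_{\modulesp^k_{\SM_{t,0}}}$. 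The hypothesis $|t| > 1/2$ is exactly what makes $\phi$ and the generators in \eqref{eq: generator 01} well-defined (no singularity at $t=0$); no further smallness is needed.

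There is no real obstacle here — the only point requiring a moment's care is that conjugation must map the generator \emph{set} bijectively onto the target generator set (so that the index sets of the two sums match up), rather than merely mapping the generated module to the generated module; the radial symmetry of $\phi$, which kills the correction term for the rotation generators, is what makes this clean.
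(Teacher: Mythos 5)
Your proof is correct and is essentially the paper's own argument: the paper verifies the same three intertwining identities (conjugation by $e^{-i|z|^2/4t}$ sends $D_{z_j}$ to $D_{z_j}+z_j/(2t)$, fixes the rotations, and sends $2tD_{z_j}-z_j$ to $2tD_{z_j}$) and concludes by the same unimodularity observation. Your extra care about the bijection of generator sets and the propagation through words of length $k$ is just a more explicit write-up of the step the paper leaves implicit.
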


\begin{proof}
The result follows from the following intertwining identities:
\begin{align}  \label{eq: u,tilde u, intertwining}
\begin{split}
& (D_{z_j}+\frac{z_j}{2t})\Big( e^{-i\frac{|z|^2}{4t}}u \Big) =  e^{-i\frac{|z|^2}{4t}} D_{z_j}u, \\
& (z_iD_{z_j}-z_jD_{z_i})\Big(e^{-i\frac{|z|^2}{4t}} u\Big) = e^{-i\frac{|z|^2}{4t}}(z_iD_{z_j}-z_jD_{z_i})u,\\
& 2tD_{z_j}\Big(e^{-i\frac{|z|^2}{4t}}u\Big) = e^{-i\frac{|z|^2}{4t}}(2tD_{z_j}-z_j)(u).
\end{split}
\end{align}
\end{proof}

Notice that the condition that the $\Psi^{0,0}-$span of $O$ is a Lie subalgebra means commutators of generators are $\Psi^{0,0}-$coefficient combinations of generators, hence the commuting two generators in a monomial changes it by a lower order monomial. Hence we may reduce the set $O^{(k)}$ to monomials of the form $A_1^{\alpha_1}A_2^{\alpha_2}..$, where $A_1,A_2,...$ are distinct generators.

The generator set $\mathcal{M}_{t,0}$ is equivalent, for $|t| \geq 1/2$, to 
\begin{align} \label{eq: generators of the M0 module}
\hat{\mathcal{M}}_{t,0}:=\{\Id, z_jD_{z_i}-z_iD_{z_j}, 2tD_{z_j}, \frac{z_j}{2t}\}, \quad |t| \geq 1/2,
\end{align}
using 
\begin{align*}
\frac{z_j}{2t} = D_{z_j}+\frac{z_j}{2t} - \frac{1}{2t}(2tD_{z_j}),
\end{align*}
and this equivalence gives the norm equivalence between their corresponding module regularity spaces, which is uniform for $|t| \geq 1/2$. To avoid the singularity for small $t$, we define for $|t| < 1/2$ 
\begin{equation} \label{eq: generator 01 t small}
\begin{aligned}
 \mathcal{M}_t &: \{\Id, z_jD_{z_i}-z_iD_{z_j}, 2(t+1)D_{z_j}-z_j, D_{z_j}\}\\
 \mathcal{M}_{t,0} &: \{\Id, z_jD_{z_i}-z_iD_{z_j}, 2(t+1)D_{z_j}, D_{z_j}+\frac{z_j}{2(t+1)}\}
\end{aligned} \ ,  \quad |t| \leq \frac1{2}. 
\end{equation}
Here, we implicitly make use of the time-translation invariance of the operator $D_t + \Delta$. Notice that we now have two definitions of $\mathcal{M}_{\pm 1/2}$ and $\mathcal{M}_{\pm 1/2, 0}$, but this does not concern us as they lead to the same module regularity space, with equivalent norms, so we can freely move between the two different definitions. 

Now define, for $|t| \geq 1/2$, 
\begin{align*}
{\zeta}_i = \frac{z_i}{2t},
\end{align*}
then the generators in \eqref{eq: generators of the M0 module} can be expressed
\begin{align}\label{eq:zetamodule}
z_jD_{z_i}-z_iD_{z_j} = {\zeta}_j 
D_{{\zeta}_i} - {\zeta}_i D_{\zeta_j}, 2tD_{z_j}=D_{{\zeta}_j}, \frac{z_j}{2t} = {\zeta}_j.
\end{align}

We show that the module regularity norm defined using these generators is effectively an 1-cusp Sobolev norm of the radially compactified $\R^n$ in ${\zeta}-$variables:
\begin{prop}   \label{prop: module0-1cusp equivalence}
Let $\mathcal{N}$ be the generator set:
\begin{align} \label{eq: zeta generator}
\Id, {\zeta}_jD_{{\zeta}_i}-{\zeta}_iD_{
{\zeta}_j},D_{{\zeta}_j},{\zeta}_j,
\end{align}
then the norm $||g||_{\modulesp^k_{\mathcal{N}}}$
is equivalent to $||g||_{H^{k,k}_{\oc}}$, with $\R^n_{{\zeta}}$ radially compactified with boundary defining function $x=\la {\zeta} \ra^{-1}$.
\end{prop}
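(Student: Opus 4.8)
The plan is to compare the two norms by a density/interpolation reduction to integer $k$, and then a direct identification of the generating vector fields. First I would observe that it suffices to treat nonnegative integer $k$ (the case of general $k$ then follows by interpolation and duality, exactly as in the definition of $H^{k,k}_{\oc}$), and to work in the two complementary regions: away from the boundary of $\overline{\R^n_\zeta}$, and in a collar neighbourhood of the boundary, using a partition of unity. Away from the boundary, both norms are equivalent to the standard Sobolev norm $H^k$ on a compact piece of the interior (for $\modulesp^k_{\SN}$ this is because the generators $D_{\zeta_j}$, $\zeta_j$, $\zeta_j D_{\zeta_i} - \zeta_i D_{\zeta_j}$ together span, over $C^\infty$, all first-order differential operators on any compact subset of $\R^n_\zeta$ and differ from a full set of coordinate derivatives only by smooth bounded coefficients; for $H^{k,k}_{\oc}$ it is the last sentence of the previous section), so there is nothing to prove there.

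The main content is in the collar neighbourhood. There I would pass to the boundary coordinates $(x,y)$ with $x = \ang{\zeta}^{-1}$, $y \in \partial \overline{\R^n}$, i.e.\ essentially polar coordinates $\zeta = r\omega$ with $x = \ang{r}^{-1} \sim r^{-1}$. The key computation is to express each generator in $\SN$ in terms of the $1$-cusp vector fields $x^3 D_x$ and $x D_{y_j}$ and the weight $x^{-1}$. Schematically: $\zeta_j \sim r \,(\text{smooth in }\omega) \sim x^{-1} \cdot (\text{smooth})$; the rotation fields $\zeta_j D_{\zeta_i} - \zeta_i D_{\zeta_j}$ are exactly the angular derivatives $\partial_{\omega}$, which are $x^{-1} \cdot (x D_y)$, hence $x^{-1}$ times a $1$-cusp vector field; and $D_{\zeta_j} = \omega_j D_r + r^{-1}(\text{angular})$, where $D_r \sim x^2 D_x \cdot x^{-1} = x^{-1}(x^3 D_x) \cdot x^{-2}$... more carefully, $r \sim x^{-1}$ gives $D_r \sim -x^2 D_x$, so $D_r = x^{-1}(x^3 D_x)$ up to lower-order, and $r^{-1}\partial_\omega = x \cdot x^{-1}(xD_y) = x^{-1}(xD_y) \cdot$, hmm — the upshot I want is that each generator in $\SN$ equals $x^{-1}$ times a $1$-cusp vector field plus lower-order terms, matching the weight in $H^{k,k}_{\oc}$ (recall the $1$-cusp norm at order $k$ carries weight $x^{-k}$). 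This is the natural origin of the matching orders $(k,k)$. I would then argue that products of $k$ such generators, applied to $u$, span (modulo lower-order module terms, which are absorbed by induction on $k$ using that $\SN$ generates a Lie algebra so commutators are lower-order) the same space as $x^{-k}$ times products of $k$ one-cusp vector fields, giving the two-sided norm comparison $\|g\|_{\modulesp^k_\SN} \approx \|g\|_{H^{k,k}_{\oc}}$.

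To make the collar comparison fully rigorous I expect to use the $1$-cusp Fourier transform $\Fic$ introduced in the previous section, exactly as in the proof of the $1$-cusp multiplication proposition: one shows that $\|g\|_{\modulesp^k_\SN}$ restricted to a coordinate patch is equivalent to $\|\ang{(\xic,\etac)}^k \Fic g\|_{L^2}$, by checking that each generator in $\SN$ is intertwined by $\Fic$ into multiplication by a function comparable to $\ang{(\xic,\etac)}$ (this is where the identities \eqref{eq:Fic-derivs} and the relation between $(\zeta_j, D_{\zeta_j})$ and $(x, x^3D_x, xD_y)$ are combined), and this is already known to be equivalent to $\|g\|_{H^{k,k}_{\oc}}$.

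The step I expect to be the main obstacle is the bookkeeping of lower-order terms when converting between "products of generators of $\SN$" and "$x^{-k}$ times products of $1$-cusp vector fields": passing from $D_{\zeta_j}$ and $\zeta_j$ to the $(x,y)$ frame produces, at each differentiation, extra terms of the form $x^{-j}$ times a product of fewer than $k$ one-cusp operators, and one must check these are controlled by the lower-order pieces of the same norm, so that an induction on $k$ closes. The Lie-algebra property of $\SN$ (commutators of generators are $\Psi^{0,0}$-combinations of generators, as noted after Proposition~\ref{prop: pulling out the oscillatory factor}) is what makes this induction go through, since it lets us reduce to monomials $A_1^{\alpha_1}\cdots$ in distinct generators and freely reorder at the cost of genuinely lower order.
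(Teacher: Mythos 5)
Your proposal is correct and follows essentially the same route as the paper's (much terser) proof: identify the generators of $\SN$ under radial compactification as $\Id,\ \partial_y,\ x^2\partial_x,\ x^{-1}$, i.e.\ each nontrivial generator is $x^{-1}$ times a $1$-cusp vector field, so $k$-fold products match the weight-$x^{-k}$, order-$k$ norm $H^{k,k}_{\oc}$. The extra scaffolding you add (partition of unity, induction on $k$ via the Lie-algebra property to absorb lower-order terms, the optional $\Fic$ reduction) is a legitimate fleshing-out of details the paper leaves implicit rather than a different argument.
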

\begin{proof}
Under the radial compactification of $\R^n$, generators in (\ref{eq: zeta generator}) are equivalent to:
\begin{align} \label{eq: weighted 1-cusp generator}
\Id, \partial_y, x^2\partial_x, x^{-1},
\end{align}
where $y=(y_1,...,y_{n-1})$ parametrizes $\partial \bar{\R}^n$. 
Since the generators other than the identity in (\ref{eq: weighted 1-cusp generator}) multiplied by $x$ are generators defining the 1-cusp norm, the $k-$th order module regularity norm defined using (\ref{eq: weighted 1-cusp generator}) is equivalent to the $||\cdot||_{H_{\oc}^{k,k}}-$norm.
\end{proof}

Thus the $\hat{\mathcal{M}}_{t,0}-$module regularity space  is the weighted 1-cusp Sobolev space with respect to ${\zeta}$ up to a $(2t)^{n/2}$ factor in the sense that letting
\begin{align} \label{eq: definition mk u,v}
\mk{u}(t,{\zeta}) = \tilde u(t,z=2t{\zeta}),
\end{align}
then 
\begin{align} \label{eq: module-1c norm relationship}
||\tilde u(t,\cdot)||_{ \modulesp^k_{\hat{\mathcal{M}}_{t,0}}} = ||(2t)^{n/2}\mk{u}(t,\cdot)||_{H^{k,k}_{\oc}(\R^n_{{\zeta}})},
\end{align}
where the $(2t)^{n/2}$ factor is introduced by the change of volume form when we change the variables.

For $|t|$ small (for example, when $|t| \leq 1/2$), we define 
\begin{align*}
\tilde{u} = e^{-i \frac{|z|^2}{4(t+1)} }u, \quad {\zeta}_i = \frac{z_i}{2(t+1)}, 
\end{align*}
and the equivalence to the 1-cusp Sobolev spaces is then valid for $|t| \leq 1/2$. Moreover, the result of Proposition~\ref{prop: pulling out the oscillatory factor} is valid for $|t| \leq 1/2$ with this definition of $\tilde u$. 

Using the multiplication result for the 1-cusp space, we have:
\begin{prop} \label{prop: module regularity, multiplication rule}
For $k>n/2$, we have
\begin{align*}
||u(t,\cdot)v(t,\cdot)||_{ \modulesp^k_{\mathcal{M}_{t,0}} }
\lesssim \ang{t}^{-n/2} ||u(t,\cdot)||_{ \modulesp^k_{\mathcal{M}_{t,0}} } ||v(t,\cdot)||_{ \modulesp^k_{\mathcal{M}_{t,0}} }.
\end{align*}
\end{prop}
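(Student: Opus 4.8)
The plan is to reduce this multiplication inequality for the $\modulesp^k_{\SM_{t,0}}$ spaces to the 1-cusp multiplication result \eqref{eq: 1-cusp multiplication}, with the decaying factor $\ang{t}^{-n/2}$ emerging purely from the change of volume form in passing to the pseudoconformal variable $\zeta = z/(2t)$. The key point is that the $\modulesp^k_{\SM_{t,0}}$-norm and the $\modulesp^k_{\hat\SM_{t,0}}$-norm are equivalent, uniformly for $|t| \geq 1/2$ (and the small-$|t|$ case is handled by the alternative definition \eqref{eq: generator 01 t small}, where the relevant factor is $\ang{t+1}^{-n/2} \approx \ang{t}^{-n/2}$), so it suffices to prove the estimate with $\hat\SM_{t,0}$ in place of $\SM_{t,0}$.

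First I would invoke \eqref{eq: module-1c norm relationship}: writing $\mk u(t,\zeta) = \tilde u(t, 2t\zeta)$ and $\mk v(t,\zeta) = \tilde v(t, 2t\zeta)$ (and correspondingly $\mk{(uv)}$), we have $\|u(t,\cdot)v(t,\cdot)\|_{\modulesp^k_{\hat\SM_{t,0}}} = \|(2t)^{n/2} \mk{(uv)}(t,\cdot)\|_{H^{k,k}_{\oc}(\R^n_\zeta)}$, and similarly for the individual factors, each carrying its own $(2t)^{n/2}$. Note that $\mk{(uv)} = \mk u \, \mk v$ pointwise in $\zeta$, since the change of variable $z = 2t\zeta$ is the same for all three functions and the oscillatory prefactor $e^{-i|z|^2/4t}$ satisfies $e^{-i|z|^2/4t} = \big(e^{-i|z|^2/4t}\big)$ appearing once — here one must be slightly careful: $\widetilde{uv} = e^{-i|z|^2/4t} uv$ while $\tilde u \tilde v = e^{-i|z|^2/2t} uv$, so in fact $\mk{(uv)} = e^{+i|z|^2/4t}\big|_{z=2t\zeta}\, \mk u \, \mk v = e^{it|\zeta|^2 \cdot(\text{const})} \mk u \mk v$; since this extra factor is a unimodular function it does not affect $L^2$ norms, but one should check it is harmless at the level of the 1-cusp $H^{k,k}_{\oc}$ norm. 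Actually the cleanest route is to work directly with the module generators: one checks that $\modulesp^k_{\SM_{t,0}}$ is an algebra-type situation by noting that each generator in $\SM_{t,0}$ acts on a product $uv$ by the Leibniz rule (the generators are first-order operators plus multiplication operators), so $\|uv\|_{\modulesp^k_{\SM_{t,0}}}$ is controlled by $\sum_{j+\ell \le k} \|(\text{gen})^j u\|_{L^\infty\text{-type}} \|(\text{gen})^\ell v\|$, and then one passes to the 1-cusp picture only to extract Sobolev embedding and the volume-form factor.

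Concretely, the steps are: (1) reduce to $|t| \geq 1/2$ and to the generator set $\hat\SM_{t,0}$, absorbing the small-$t$ case; (2) apply \eqref{eq: module-1c norm relationship} to rewrite all three module norms as weighted 1-cusp Sobolev norms of $\mk u, \mk v, \mk{(uv)}$, tracking the $(2t)^{n/2}$ factors — the product side has one factor $(2t)^{n/2}$, each of the two factors on the right also has one, so two of the three $(2t)^{n/2}$'s cancel and we are left with a net $(2t)^{-n/2} \approx \ang{t}^{-n/2}$; (3) apply the 1-cusp multiplication estimate \eqref{eq: 1-cusp multiplication} with $r_1 = r_2 = k$ to $\mk u$ and $\mk v$, valid since $k > n/2$, to conclude $\|\mk u \mk v\|_{H^{k,2k}_{\oc}} \lesssim \|\mk u\|_{H^{k,k}_{\oc}} \|\mk v\|_{H^{k,k}_{\oc}}$ — and here one must check that $H^{k,2k}_{\oc} \hookrightarrow H^{k,k}_{\oc}$ is the wrong direction, so in fact one should distribute the weights as $r_1 = k, r_2 = 0$ or balance differently, which is where the argument needs care.

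The main obstacle I expect is precisely the bookkeeping of weights in step (3): the relation \eqref{eq: module-1c norm relationship} says the $\modulesp^k$-norm equals the $H^{k,k}_{\oc}$-norm with \emph{matched} Sobolev and decay orders, whereas the 1-cusp multiplication rule \eqref{eq: 1-cusp multiplication} adds the decay orders, giving $H^{k, r_1+r_2}_{\oc}$. To land back in $H^{k,k}_{\oc}$ one cannot simply take $r_1 = r_2 = k$; instead one should observe that the factor $e^{-i|z|^2/4t}$, which in the $\zeta$ variable has the form of a Gaussian-type symbol, effectively interchanges the roles of the $D_\zeta$ and $\zeta$ generators (this is the content of the intertwining identities \eqref{eq: u,tilde u, intertwining} pushed through the change of variables), so that the "missing" decay can be traded against extra smoothness already built into the module. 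Making this trade precise — i.e. showing that $\mk u \in H^{k,k}_{\oc}$ actually gives enough to run \eqref{eq: 1-cusp multiplication} and return to $H^{k,k}_{\oc}$ for the product — is the crux, and I would handle it by splitting each generator monomial in $\hat\SM_{t,0}$ applied to $uv$ via Leibniz, bounding the lower-order factor in $L^\infty$ using the 1-cusp Sobolev embedding $H^{k,0}_{\oc} \hookrightarrow L^\infty$ (valid for $k > n/2$, with the $x^{-k}$ weight providing decay to spare), and bounding the remaining factor in $L^2$, the $\ang{t}^{-n/2}$ again coming from the Jacobian of $z = 2t\zeta$ in the $L^\infty$-estimated factor.
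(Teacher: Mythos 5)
Your steps (1) and (2) are exactly the paper's proof: reduce to $|t|\ge 1/2$ and the generators $\hat{\mathcal M}_{t,0}$, change variables to $\zeta = z/(2t)$ so that each of the three module norms becomes a $(2t)^{n/2}$-weighted $H^{k,k}_{\oc}$ norm via \eqref{eq: module-1c norm relationship}, and observe that the three Jacobian factors net out to $(2t)^{-n/2}$. You even write down the correct application of \eqref{eq: 1-cusp multiplication} with $r_1=r_2=k$. But then you talk yourself out of the argument at the last step, and this is where your proposal goes wrong: the embedding $H^{k,2k}_{\oc}\hookrightarrow H^{k,k}_{\oc}$ is \emph{not} the wrong direction. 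With the paper's convention $\|w\|^2_{H^{s,r}_{\oc}} = \|x^{-r}w\|^2_{L^2}+\cdots$ and $x=\ang{\zeta}^{-1}\le 1$, a larger decay order $r$ gives a \emph{larger} norm, so $\|w\|_{H^{k,k}_{\oc}}\le\|w\|_{H^{k,2k}_{\oc}}$. The product $\mk u\,\mk v$ therefore lands in the \emph{stronger} space $H^{k,2k}_{\oc}$ and one simply discards the surplus decay:
\begin{equation*}
\|\mk u\,\mk v\|_{H^{k,k}_{\oc}} \le \|\mk u\,\mk v\|_{H^{k,2k}_{\oc}} \lesssim \|\mk u\|_{H^{k,k}_{\oc}}\|\mk v\|_{H^{k,k}_{\oc}},
\end{equation*}
which closes the proof exactly as in the paper. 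There is no weight to redistribute, no trade of decay against smoothness, and no need for the Leibniz-plus-Sobolev-embedding fallback you sketch (which, as written, would essentially be re-deriving the separate Gagliardo--Nirenberg machinery of Section 3.2 that serves a different purpose, namely Proposition~\ref{prop: N[u] bound}).

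One smaller point: your worry about $\mk{(uv)}$ versus $\mk u\,\mk v$ is legitimate if one insists on defining $\mk w$ with the oscillatory prefactor, since multiplication by $e^{it|\zeta|^2}$ is \emph{not} uniformly bounded on $H^{k,k}_{\oc}$ in $t$ (each $D_\zeta$ produces a factor $t\zeta$). But the resolution is simply that \eqref{eq: module-1c norm relationship}, read as a statement about the $\hat{\mathcal M}_{t,0}$ generators, is a pure change of variables: the generators of $\hat{\mathcal M}_{t,0}$ become exactly those of $\mathcal N$ under $z=2t\zeta$ by \eqref{eq:zetamodule}, so one applies it with $\mk w(\zeta)=w(2t\zeta)$ and no oscillatory factor ever enters; then $\mk{(uv)}=\mk u\,\mk v$ identically. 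The statement being proved concerns the $\mathcal M_{t,0}$-norms of $u$, $v$, $uv$ themselves, so Proposition~\ref{prop: pulling out the oscillatory factor} is not needed here at all.
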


\begin{proof} For $|t| \geq 1/2$, 
this follows from (\ref{eq: module-1c norm relationship}) and (\ref{eq: 1-cusp multiplication}):
\begin{align*}
||u(t,\cdot)v(t,\cdot)||_{ \modulesp^k_{\mathcal{M}_{t,0}} }
= & (2t)^{n/2}||\mk{u}(t,\cdot)\mk{v}(t,\cdot)||_{H^{k,k}_{\oc}(\R^n_{{\zeta}})}
\\ & \leq (2t)^{n/2}||\mk{u}(t,\cdot)\mk{v}(t,\cdot)||_{H^{k,2k}_{\oc}(\R^n_{{\zeta}})}
\\ & \lesssim (2t)^{n/2}||\mk{u}(t,\cdot)||_{H^{k,k}_{\oc}(\R^n_{{\zeta}})}||\mk{v}(t,\cdot)||_{H^{k,k}_{\oc}(\R^n_{{\zeta}})}
\\ & = (2t)^{-n/2} ||u(t,\cdot)||_{ \modulesp^k_{\mathcal{M}_{t,0}} } ||v(t,\cdot)||_{ \modulesp^k_{\mathcal{M}_{t,0}} }.
\end{align*}
When $|t| \leq 1/2$, the inequalities above hold with $t$ replaced by $t+1$. 
\end{proof}

\subsection{A Gagliardo-Nirenberg type inequality for Module regularity spaces}
\label{sec: module Gagliardo-Nirenberg}
 In this section we prove the following bound for the nonlinear term:
\begin{prop}  \label{prop: N[u] bound}
For $u \in \modulesp^k_{\mathcal{M}_{t}}$, we have
\begin{align} \label{eq: N[u] bound}
|| |u|^{p-1} u ||_{\modulesp^k_{\mathcal{M}_{t}}} 
\leq C ||u||_{L^\infty(\R^n)}^{p-1}||u||_{\modulesp^k_{\mathcal{M}_{t} }},
\end{align}
where the constant $C$ is independent of $t$.
\end{prop}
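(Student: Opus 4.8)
The plan is to reduce the estimate to the multiplicative structure of the module $\SM_t$ via the Leibniz rule, and then absorb all but one of the resulting factors of $u$ into the $L^\infty$ norm using an interpolation (Gagliardo--Nirenberg) argument. First I would observe that, as noted after Proposition~\ref{prop: pulling out the oscillatory factor}, the set $O^{(k)}$ of monomials of degree $\le k$ in the generators of $\SM_t$ can be reduced to monomials $A_1^{\alpha_1}\cdots A_m^{\alpha_m}$ in distinct generators, modulo lower-order monomials, since $\SM_t$ is closed under commutators with $\Psi^{0,0}$-coefficients. So it suffices to bound $\|A(|u|^{p-1}u)\|_{L^2}$ for $A$ a product of at most $k$ generators from \eqref{eq: generator 01}. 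Each generator in $\SM_t$ (namely $z_jD_{z_i}-z_iD_{z_j}$, $2tD_{z_j}-z_j$, $D_{z_j}$) is a first-order differential operator with smooth coefficients and acts as a derivation on products; note $|u|^{p-1}u = u^{(p+1)/2}\ubar^{(p-1)/2}$ is a genuine monomial in $u,\ubar$ because $p$ is an odd integer, which is where that hypothesis enters. Applying $A$ and the Leibniz rule distributes the (at most $k$) generators among the $p$ factors of $u$ (or $\ubar$), producing a sum of terms of the form $(B_1 u^{(\cdot)})\cdots(B_p u^{(\cdot)})$ where each $B_\ell$ is a product of $k_\ell$ generators and $\sum_\ell k_\ell \le k$. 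A complex conjugate $\ubar$ is handled identically since $\SM_t$ has real coefficients (up to the obvious sign convention), so $\|B\ubar\|_{L^2} = \|Bu\|_{L^2}$.

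Next, for each such term I would estimate it in $L^2$ by putting the single factor carrying the most derivatives (say $B_1 u$ with $k_1$ maximal) in $L^2$ and the remaining $p-1$ factors in $L^\infty$:
\begin{align*}
\|(B_1 u)\cdots(B_p u)\|_{L^2} \le \|B_1 u\|_{L^2}\prod_{\ell=2}^{p} \|B_\ell u\|_{L^\infty}.
\end{align*}
The factor $\|B_1 u\|_{L^2}$ is bounded by $\|u\|_{\modulesp^k_{\SM_t}}$ directly. For the $L^\infty$ factors, each $B_\ell u$ has $k_\ell \le k - k_1$ derivatives; since $k_1$ is the maximum, $k_\ell \le \lfloor k/p \rfloor \le k/2$ (as $p\ge 3$, indeed $p\ge 5$ when $n=1$), so these are low-order terms, and I would interpolate: a Gagliardo--Nirenberg-type inequality gives $\|B_\ell u\|_{L^\infty} \lesssim \|u\|_{L^\infty}^{1-\theta_\ell}\|u\|_{\modulesp^k_{\SM_t}}^{\theta_\ell}$ for a suitable $\theta_\ell \in [0,1)$, with the sum of the exponents $\theta_\ell$ over $\ell = 2,\dots,p$ bounded by $1$ (this is the balance: $k_1 + \sum_{\ell\ge 2} k_\ell \le k$, and the "leftover" regularity budget is at most the order of $B_1$ which is already accounted for). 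Collecting, each term is $\lesssim \|u\|_{L^\infty}^{p-1-\sum\theta_\ell}\|u\|_{\modulesp^k_{\SM_t}}^{1+\sum\theta_\ell}$; but one must be slightly more careful to land exactly on the claimed homogeneity $\|u\|_{L^\infty}^{p-1}\|u\|_{\modulesp^k_{\SM_t}}$ --- the right bookkeeping is that the total "$\modulesp^k$-weight" distributed among all $p$ factors is exactly $1$ (one factor's worth of top regularity), the rest being $L^\infty$; interpolating each factor and summing exponents yields precisely this. Crucially, all generators of $\SM_t$ and all interpolation constants can be taken uniform in $t$: though the coefficient $2t$ in $2tD_{z_j}-z_j$ grows, it is one of the \emph{generators} itself, so $\|(2tD_{z_j}-z_j)u\|$ is literally part of the $\modulesp^k_{\SM_t}$-norm and no $t$-dependent constant is incurred; the rotation and translation generators have $t$-independent coefficients. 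This is what gives the $t$-independence of $C$.

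The main obstacle is the interpolation step: proving the Gagliardo--Nirenberg estimate $\|Bu\|_{L^\infty} \lesssim \|u\|_{L^\infty}^{1-\theta}\|u\|_{\modulesp^k_{\SM_t}}^{\theta}$ for intermediate-order $B$, uniformly in $t$, and verifying that the exponents add up correctly so that the final homogeneity is exactly $(p-1, 1)$ rather than something weaker. The cleanest route is to transfer, via Proposition~\ref{prop: pulling out the oscillatory factor} and the change of variable $\zeta = z/(2t)$ (as in \eqref{eq: module-1c norm relationship}), to the 1-cusp Sobolev spaces $H^{k,k}_{\oc}$ on $\R^n_\zeta$, where the module structure becomes the standard differential structure and classical Gagliardo--Nirenberg interpolation inequalities apply on the compactified manifold; one checks the $(2t)^{n/2}$ volume factors and the oscillatory conjugation are harmless because both sides of \eqref{eq: N[u] bound} transform the same way (indeed this is essentially how Proposition~\ref{prop: module regularity, multiplication rule} was proved). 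Alternatively, one can argue directly: the operators $B_\ell$ together with $\Id$ span, after the $\zeta$-change of variables, products of the vector fields $\partial_{y}, x^2\partial_x, x^{-1}$ in \eqref{eq: weighted 1-cusp generator}, for which $L^\infty$--$L^2$ interpolation against the full $k$-th order norm is standard on a manifold with boundary. I would present the 1-cusp transfer version, since the machinery of Section~3 is already in place and makes the $t$-independence transparent.
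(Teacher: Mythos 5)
Your overall strategy (Leibniz expansion, H\"older, Gagliardo--Nirenberg interpolation, with the whole computation transferred to the $\zeta$-variable vector fields so that $t$-independence is automatic) is the same family of argument as the paper's, but your specific H\"older split contains a genuine gap. You propose to put the single highest-order factor in $L^2$ and every other factor $B_\ell u$, carrying $k_\ell$ generators, in $L^\infty$, bounding $\|B_\ell u\|_{L^\infty}\lesssim \|u\|_{L^\infty}^{1-\theta_\ell}\|u\|_{\modulesp^k_{\SM_t}}^{\theta_\ell}$. That interpolation inequality is \emph{not} available for intermediate orders unless $k_\ell < k-n/2$: to control an $L^\infty$ norm of $k_\ell$ derivatives from $L^2$ control of $k$ derivatives you must pay a Sobolev embedding cost of $n/2$ derivatives. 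The theorem only assumes $k\ge 2$, and already for $k=2$, $p=3$, $n=2$ or $3$ the Leibniz expansion of $V^2(u^2\bar u)$ contains terms such as $(Vu)(V\bar u)\,u$; after placing one factor $Vu$ in $L^2$ you would need $\|V\bar u\|_{L^\infty}\lesssim \|u\|_{L^\infty}^{1-\theta}\|V^2u\|_{L^2}^{\theta}$, which forces $\theta=1$ in $n=2$ (false, by the failure of $H^1(\R^2)\hookrightarrow L^\infty$) and is impossible in $n=3$. So the scheme breaks exactly at the minimal regularity the main theorem requires.

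The paper avoids this by never putting intermediate-order factors in $L^\infty$: it distributes the Lebesgue exponents as $L^{2k/k_1}\times\cdots\times L^{2k/k_p}$, so that H\"older closes exactly ($\sum k_\ell/(2k)=1/2$), and proves the dimension-free interpolation inequality $\|V^l\tb{u}\|_{L^{2k/l}}\le C\|\tb{u}\|_{L^\infty}^{1-l/k}\|V^k\tb{u}\|_{L^2}^{l/k}$ (Corollary~\ref{coro- GN- Vl-Linfinity-Vk}) by an integration-by-parts induction (Lemma~\ref{Lemma: GN-base case} and Proposition~3.8), taking care that the weight $\la\zeta\ra$ and the rotation fields are handled separately since they are not translation-invariant derivatives. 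This is packaged as a Moser-type two-factor product estimate (Proposition~\ref{prop: bound, Vk product}) which is then iterated to peel off one factor of $\tilde u$ or $\bar{\tilde u}$ at a time, giving exactly the homogeneity $\|u\|_{L^\infty}^{p-1}\|u\|_{\modulesp^k_{\SM_t}}$ without any exponent bookkeeping issues. A smaller point: the generator $2tD_{z_j}-z_j$ is not a derivation, so "Leibniz" must be interpreted as the weaker statement that $V^k(u_1u_2)$ is still a sum of terms $(V^\beta u_1)(V^\gamma u_2)$ with $|\beta|+|\gamma|=k$; the paper notes this explicitly and it does not cause trouble, but your appeal to the generators "acting as derivations" is not literally correct.
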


Recalling  the norm equivalence obtained from (\ref{eq: u,tilde u, intertwining}), this is equivalent to
\begin{align} \label{eq: N[tilde u] bound}
|| |u|^{p-1} \tilde u ||_{\modulesp^k_{\mathcal{M}_{t,0} }} 
\leq C ||\tilde{u}||_{L^\infty(\R^n)}^{p-1}||\tilde{u}||_{\modulesp^k_{\mathcal{M}_{t,0} }},
\end{align}
when $u,\tilde{u}$ are related by (\ref{eq: tilde u definition}).
The proof of Proposition \ref{prop: N[u] bound} is postponed to the end of this section, after proving several Gaglieardo-Nirenberg type inequalities of our module regularity norm. For treatment of classical version of those results, see for example \cite[Section~13.3]{taylorPDEvol3}.

For a normed function space $\mathcal{X},k \in \Z_+$, we will use $||V^k\tb{u}||_{\mathcal{X}}$ to denote the sum of the $\mathcal{X}-$norm of all $k-$th degree monomial in (components of) 
\begin{align*}
V=(D_{{\zeta}_j},{\zeta}_iD_{{\zeta}_j}-{\zeta}_jD_{{\zeta}_i},\la {\zeta} \ra),
\end{align*} 
applied to $\tb{u}$:
\begin{align}
\| V^k\tb{u} \|_{\mathcal{X}} = \sum_{|\alpha| \leq k} \|V^\alpha \tb{u}\|_{\mathcal{X}}.
\end{align}

\begin{lmm}
\label{Lemma: GN-base case}
For $1 \leq k \leq p$, we have
\begin{align}  \label{eq: GN-base case}
||V\tb{u}||^2_{L^{2k/p}} \leq C||\tb{u}||_{L^{2k/(p-1)}}||V^2\tb{u}||_{L^{2k/(p+1)}}.
\end{align}
\end{lmm}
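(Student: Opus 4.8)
The plan is to reduce \eqref{eq: GN-base case} to a standard Gagliardo--Nirenberg--type interpolation inequality applied componentwise to the vector field $V$, combined with H\"older's inequality to handle the product structure of the three exponents. First I would note that the three Lebesgue exponents in play are linked by the harmonic-mean relation
\begin{align*}
\frac{p}{2k} = \frac12\left(\frac{p-1}{2k} + \frac{p+1}{2k}\right),
\end{align*}
so that, writing $q_0 = 2k/(p-1)$, $q_1 = 2k/(p+1)$ and $q = 2k/p$, we have $\tfrac1q = \tfrac12(\tfrac1{q_0} + \tfrac1{q_1})$; this is exactly the scaling relation that makes the classical first-order Gagliardo--Nirenberg inequality $\|\nabla w\|_{L^q}^2 \lesssim \|w\|_{L^{q_0}} \|\nabla^2 w\|_{L^{q_1}}$ dimensionally consistent on $\R^n$ for any $n$ (the homogeneity in $w$ and in the derivative count both balance, independently of $n$). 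The constraint $1 \le k \le p$ guarantees $q_0, q_1, q \in [1,\infty)$ (in fact $q_1 \ge 1$ needs $2k \ge p+1$, which may fail at $k=1$ when $p\ge 3$, so more carefully the inequality should be read with the convention that for small $k$ one uses the borderline BMO/$L^\infty$ version, or one restricts attention to the range actually used later — I would state it for the $k$ that occur in the application and invoke the appropriate endpoint form of GN when an exponent hits $1$).

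The key steps, in order, would be: (1) Expand $\|V\mathbf{u}\|_{L^{q}}^2$ as a finite sum over the components of $V$ — the flat derivatives $D_{\zeta_j}$, the rotation generators $\zeta_i D_{\zeta_j} - \zeta_j D_{\zeta_i}$, and the weight $\langle\zeta\rangle$ — and treat each component separately. (2) For the flat derivative $D_{\zeta_j}$, apply the classical first-order Gagliardo--Nirenberg inequality on $\R^n$: $\|D_{\zeta_j}\mathbf{u}\|_{L^q}^2 \lesssim \|\mathbf{u}\|_{L^{q_0}} \|\nabla^2 \mathbf{u}\|_{L^{q_1}}$, and bound $\|\nabla^2\mathbf{u}\|_{L^{q_1}}$ by $\|V^2\mathbf{u}\|_{L^{q_1}}$ since two flat derivatives form a second-degree monomial in $V$. (3) For the weight component, observe that $\|\langle\zeta\rangle\mathbf{u}\|_{L^q}^2 \le \|\langle\zeta\rangle\mathbf{u}\|_{L^{q_0}}\|\langle\zeta\rangle\mathbf{u}\|_{L^{q_1}}\cdot(\text{H\"older with the harmonic-mean relation})$; more precisely, by H\"older $\|\langle\zeta\rangle \mathbf u\|_{L^q}^2 \le \|\mathbf u\|_{L^{q_0}}\,\|\langle\zeta\rangle^2\mathbf u\|_{L^{q_1}}$ using $\tfrac2q = \tfrac1{q_0}+\tfrac1{q_1}$ and factoring $\langle\zeta\rangle^2\mathbf{u}\cdot \mathbf{u}$ appropriately — again $\langle\zeta\rangle^2\mathbf{u}$ is a second-degree $V$-monomial applied to $\mathbf u$. (4) For the rotation generators, which are first order in derivatives but carry a $\zeta$ weight (hence a priori first-order in $V$ counted with weight), combine a weighted GN inequality — essentially step (2) applied after multiplying by $\langle\zeta\rangle$ — with the fact that a rotation generator squared, or a rotation composed with $\langle\zeta\rangle$ or a derivative, again lands among the second-degree $V$-monomials; the commutator $[\langle\zeta\rangle, D_{\zeta_j}]$ is bounded, so reordering costs only lower-order terms already controlled by $\|\mathbf u\|_{L^{q_0}}$. (5) Sum the finitely many component estimates, using $ab \le \tfrac12(a^2+b^2)$ or simply absorbing constants, to obtain \eqref{eq: GN-base case}.

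The main obstacle I anticipate is the bookkeeping for the rotation generators and the weight: unlike the pure flat-derivative case, these are not translation-invariant, so the classical GN inequality does not apply verbatim, and one must either (a) prove a weighted version of first-order GN — something like $\|\langle\zeta\rangle^a \nabla w\|_{L^q}^2 \lesssim \|\langle\zeta\rangle^b w\|_{L^{q_0}}\|\langle\zeta\rangle^c \nabla^2 w\|_{L^{q_1}}$ with the weights matching — or (b) carefully commute weights past derivatives, tracking that every commutator term is of strictly lower order in $V$ and so is dominated by the right-hand side. I would favor approach (b): run the standard GN proof (integrate by parts, Cauchy--Schwarz/H\"older) but with $\langle\zeta\rangle\mathbf u$ in place of $\mathbf u$ where needed, noting that $D_{\zeta_j}(\langle\zeta\rangle\mathbf u) = \langle\zeta\rangle D_{\zeta_j}\mathbf u + (D_{\zeta_j}\langle\zeta\rangle)\mathbf u$ and $D_{\zeta_j}\langle\zeta\rangle$ is bounded, so the error is an $L^{q_0}$-controlled term. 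The only genuine subtlety is the boundary behaviour of the exponents ($q_1 = 1$ at the low end of the $k$-range), where the $L^1$-endpoint of GN requires either a BMO substitute or the observation that in the eventual application $k \ge 2$ and the relevant exponents stay safely in $(1,\infty)$; I would flag this and proceed under the exponents actually needed downstream.
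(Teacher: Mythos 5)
Your decomposition into the three types of generators, and your treatment of the multiplication operator $\la\zeta\ra$ by H\"older using the exponent relation $\tfrac{2}{q}=\tfrac{1}{q_0}+\tfrac{1}{q_1}$, match the paper. The genuine gap is in your step (4), the rotation generators $R=\zeta_iD_{\zeta_j}-\zeta_jD_{\zeta_i}$. Your proposed reduction --- view $R\tb{u}$ as controlled by $\la\zeta\ra\nabla\tb{u}$ and apply a ``weighted GN'', i.e.\ the classical inequality to $w=\la\zeta\ra\tb{u}$ --- produces the factors $\|\la\zeta\ra\tb{u}\|_{L^{q_0}}$ and $\|\nabla^2(\la\zeta\ra\tb{u})\|_{L^{q_1}}$, whose main contribution is $\|\la\zeta\ra\nabla^2\tb{u}\|_{L^{q_1}}$. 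Neither is controlled by the right-hand side of \eqref{eq: GN-base case}: the lemma has the bare $\|\tb{u}\|_{L^{q_0}}$ with no weight, and $\la\zeta\ra D_{\zeta_i}D_{\zeta_j}$ is a degree-\emph{three} monomial in $V$, not degree two, so it is not dominated by $\|V^2\tb{u}\|_{L^{2k/(p+1)}}$. Commutator bookkeeping does not rescue this: the obstruction is not lower-order error terms but that the main terms carry weights the right-hand side does not permit.

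The paper avoids this entirely by never decomposing the generators. For every non-multiplication generator $V$ (flat derivative and rotation alike) it uses only that $\int Vf\,d\zeta=0$ for $f\in C_c^\infty$ --- clear for $D_{\zeta_j}$, and for $R$ seen by passing to polar coordinates in the $(\zeta_i,\zeta_j)$-plane, where $R$ becomes the angular derivative --- and then integrates the pointwise identity
\begin{equation*}
V\bigl(\tb{u}\,(V\tb{u})\,|V\tb{u}|^{q-2}\bigr)=|V\tb{u}|^q+(q-1)\,\tb{u}\,(V^2\tb{u})\,|V\tb{u}|^{q-2},
\end{equation*}
applies H\"older to the second term on the right, and divides by $\|V\tb{u}\|_{L^q}^{q-2}$. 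The crucial point is that the only second-order object appearing is $V^2\tb{u}$, the \emph{same} generator applied twice, which is a degree-two $V$-monomial and hence is dominated by the right-hand side in the paper's notational convention. So the fix for your step (4) is to run this Nirenberg integration-by-parts argument directly with $R$ itself rather than reducing to flat derivatives; the same argument then also covers $D_{\zeta_j}$, so the appeal to the classical translation-invariant GN inequality in your step (2) becomes unnecessary. Your caveat about the exponents $q$ and $q_1$ possibly dropping below $1$ at the low end of the range $1\le k\le p$ is a fair observation, but it is one the paper's own proof shares and is orthogonal to the gap above.
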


\begin{proof}
For $V= \la {\zeta} \ra$, it follows from H\"older's inequality. For the other two vector fields, for $f \in C_0^\infty(\R^n)$, we have
\begin{align*}
\int Vf dz = 0.
\end{align*}
This is clear for $V= 2tD_{z_j}$. For $V={\zeta}_iD_{{\zeta}_j}-{\zeta}_jD_{{\zeta}_i}$, we write the integral in ${\zeta}_i,{\zeta}_j$ in polar coordinates and this vector field becomes (the radial length times) the angular derivative, and the integral vanishes.

Notice that:
\begin{align*}
V(\tb{u}(V\tb{u})(|V\tb{u}|^{q-2})) = |V\tb{u}|^q + (q-1)\tb{u}(V^2\tb{u})|V\tb{u}|^{q-2}. 
\end{align*}
Integrating both sides, and then apply H\"older's inequality to the right hand side, we have
\begin{align*}
||V\tb{u}||_{L^q}^q \leq C||\tb{u}||_{L^{2k/(p-1)}}||V^2\tb{u}||_{L^{2k/(p+1)}}||V\tb{u}||_{L^q}^{q-2}, 
\end{align*}
where $q=\frac{2k}{p}$. Dividing both sides by 
$||V\tb{u}||_{L^q}^{q-2}$ finishes the proof.
\end{proof}
\begin{rmk}
When $p=1$, $||\tb{u}||_{L^{2k/(p-1)}}$ is interpreted as $||\tb{u}||_{L^\infty}$, and the result is unchanged; while the only change to the argument is that the step applying H\"older's inequality is made simpler: we factor $||\tb{u}||_{L^\infty}$ out first and then the remaining steps are the same.
Also, for results and proofs below in this section, whenever a zero denominator appears in the exponent of $L^{\cdot}-$space, we interpret it as the $L^\infty-$norm.
\end{rmk}

Applying (\ref{Lemma: GN-base case}) to $V^{l-1}\tb{u}$, we have
\begin{align} \label{eq: GN V l j1=1 j2=1}
||V^l\tb{u}||^2_{L^{2k/p}} \leq C||V^{l-1}\tb{u}||_{L^{2k/(p-1)}}||V^{l+1}\tb{u}||_{L^{2k/(p+1)}},
\end{align}
for $k \geq 1, 1 \leq p \leq k$. 

Next we use induction to prove:
\begin{prop}
For $j_1 \leq p \leq k+1-j_2, j_1 \leq l$, we have:
\begin{align} \label{eq: GN-l-j1,l+j2}
||V^{l}\tb{u}||_{ L^{2k/p} } \leq C ||V^{l-j_1}\tb{u}||_{L^{2k/(p-j_1)}}^{ \frac{j_2}{j_1+j_2} }
||V^{l+j_2}\tb{u}||_{L^{2k(p+j_2)} }^{ \frac{j_1}{j_1+j_2}  }.
\end{align}

\end{prop}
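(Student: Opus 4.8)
The plan is to prove \eqref{eq: GN-l-j1,l+j2} by a double induction: an outer induction on $j_1 + j_2$ (the "spread" between the two derivative orders on the right), with the base case $j_1 = j_2 = 1$ already established in \eqref{eq: GN V l j1=1 j2=1}, and within each level an inductive use of the inequalities already proved for smaller spread. The key algebraic device is interpolation by iterated application of the base estimate combined with Hölder. Concretely, to go from spread $m$ to spread $m+1$, one writes $V^l \tb u$ in terms of a mid-point estimate: apply \eqref{eq: GN V l j1=1 j2=1} (or an already-proved case of \eqref{eq: GN-l-j1,l+j2}) to bound $\|V^l \tb u\|_{L^{2k/p}}$ by a geometric mean of $\|V^{l-1}\tb u\|_{L^{2k/(p-1)}}$ and $\|V^{l+1}\tb u\|_{L^{2k/(p+1)}}$, then apply the inductive hypothesis (valid for spread $m$) to each of these two factors to pull the orders out to $l - j_1$ and $l + j_2$, and finally recombine the exponents. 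The exponents $\frac{j_2}{j_1+j_2}$ and $\frac{j_1}{j_1+j_2}$ are exactly what one gets from the telescoping Young/Hölder bookkeeping — the same combinatorics as in the classical Gagliardo–Nirenberg interpolation, e.g.\ \cite[Section~13.3]{taylorPDEvol3}.

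First I would fix notation: set $q_\alpha = 2k/(p - j_1 + \alpha)$ for the relevant shifted exponents, and check at the outset that all the $L^{q}$ spaces appearing have $q \geq 1$ (indeed $q \geq 2$) under the hypotheses $j_1 \le p \le k + 1 - j_2$; this is where the constraint $j_1 \le p$ and $p \le k+1-j_2$ enters, guaranteeing that the Lebesgue exponents stay in the admissible range and that the $L^\infty$ interpretation of a zero denominator is only ever invoked at an endpoint. Next I would set up the induction on $s := j_1 + j_2$. For $s = 2$ the statement is \eqref{eq: GN V l j1=1 j2=1}. For the inductive step from $s$ to $s+1$, given $(j_1, j_2)$ with $j_1 + j_2 = s + 1$, I would split off one unit of derivative: use the base inequality to get $\|V^l\tb u\|_{L^{2k/p}} \le C \|V^{l-1}\tb u\|_{L^{2k/(p-1)}}^{1/2}\|V^{l+1}\tb u\|_{L^{2k/(p+1)}}^{1/2}$, then apply the inductive hypothesis at spread $s$ to the two factors (with $(j_1 - 1, j_2)$ and $(j_1, j_2 - 1)$ respectively, shifting $l$ and $p$ accordingly — one of these may have a zero component, in which case the factor is already in the desired form and no induction is needed), and finally multiply the resulting powers of $\|V^{l-j_1}\tb u\|$ and $\|V^{l+j_2}\tb u\|$, verifying that the exponents sum correctly to $\frac{j_2}{j_1+j_2}$ and $\frac{j_1}{j_1+j_2}$. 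A short computation with the recursion $\tfrac12\cdot\tfrac{j_2}{j_1+j_2-1} + \tfrac12 \cdot(\text{something}) = \tfrac{j_2}{j_1+j_2}$ closes this.

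The main obstacle I anticipate is purely bookkeeping rather than conceptual: making the exponent arithmetic in the inductive step come out exactly right, and simultaneously tracking the constraints on $p$ so that every intermediate Lebesgue exponent is $\ge 1$ and every intermediate invocation of \eqref{eq: GN-l-j1,l+j2} is legitimate (in particular that the shifted parameters $(j_1 - 1, j_2)$, $(j_1, j_2 - 1)$ together with the shifted $p$ still satisfy $j_1' \le p' \le k + 1 - j_2'$). A secondary subtlety is the degenerate cases where $j_1 = l$ (so $V^{l-j_1}\tb u = \tb u$ and the exponent $2k/(p - j_1)$ may be $\infty$) or where one of the induction parameters drops to $0$; these should be handled as explicit base cases where one simply does not recurse on that factor. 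Once the exponent identities are pinned down, the proof is a routine induction, and I would present it compactly, emphasizing the base case, the splitting identity, and the one-line exponent verification.
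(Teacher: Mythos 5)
Your overall strategy---induct on the spread $j_1+j_2$, starting from the midpoint inequality \eqref{eq: GN V l j1=1 j2=1}---is the same as the paper's, but the inductive step as you describe it does not close, and the missing ingredient is precisely the crux of the paper's argument. Write $F(m)=\log\|V^{l+m}\tb{u}\|_{L^{2k/(p+m)}}$, so the goal is $F(0)\leq \frac{j_2}{j_1+j_2}F(-j_1)+\frac{j_1}{j_1+j_2}F(j_2)+O(1)$ and the base case is midpoint convexity $F(0)\leq \tfrac12 F(-1)+\tfrac12 F(1)+O(1)$. You propose to split at the midpoint and then apply the inductive hypothesis to the two factors ``with $(j_1-1,j_2)$ and $(j_1,j_2-1)$ respectively'' to pull the orders out to $l-j_1$ and $l+j_2$. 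But applied at the centre $l-1$, the case $(j_1-1,j_2)$ produces the endpoints $l-j_1$ and $l+j_2-1$, not $l+j_2$; symmetrically, $(j_1,j_2-1)$ at the centre $l+1$ produces $l+1-j_1$ and $l+j_2$. Carrying this out gives
\begin{equation*}
F(0)\leq \tfrac{j_2}{2(j_1+j_2-1)}F(-j_1)+\tfrac{j_1}{2(j_1+j_2-1)}F(j_2)+\tfrac{j_1-1}{2(j_1+j_2-1)}F(j_2-1)+\tfrac{j_2-1}{2(j_1+j_2-1)}F(1-j_1)+O(1),
\end{equation*}
with stray interior terms of total weight $\frac{j_1+j_2-2}{2(j_1+j_2-1)}$ that are not the desired endpoints; equivalently, your anticipated exponent recursion $\tfrac12\cdot\tfrac{j_2}{j_1+j_2-1}=\tfrac{j_2}{j_1+j_2}$ fails except when $j_1+j_2=2$. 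The only case of the hypothesis that would pull both endpoints out directly is $(j_1-1,j_2+1)$, which has spread $j_1+j_2$ and is therefore not available at that stage of the induction.

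The repair---and what the paper actually does---is to interpolate the neighbours back against the centre: e.g.\ bound $F(-1)$ by a convex combination of $F(-j_1)$ and $F(0)$ (parameters $(j_1-1,1)$, spread $j_1\leq j_1+j_2-1$, so legitimately covered), and likewise $F(1)$ by $F(0)$ and $F(j_2)$. This reintroduces $\|V^{l}\tb{u}\|_{L^{2k/p}}$ on the \emph{right-hand side} with total exponent $1-\frac{1}{2j_1}-\frac{1}{2j_2}<1$, and one must then divide through by that power (the self-absorption step visible in the paper's display \eqref{eq: GN proof   04} and the two lines after it); only after this division do the exponents normalize to $\frac{j_2}{j_1+j_2}$ and $\frac{j_1}{j_1+j_2}$. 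Your proposal never mentions moving a power of $\|V^{l}\tb{u}\|_{L^{2k/p}}$ from right to left, and without it no choice of admissible (smaller-spread) parameters makes the bookkeeping ``sum correctly.'' Note also that the absorption tacitly requires $\|V^{l}\tb{u}\|_{L^{2k/p}}$ to be finite and nonzero, which should be recorded (it is standard, handled by working first with nice $\tb{u}$). Your remarks on the admissible range of the Lebesgue exponents and on the degenerate cases $j_1=l$, $p=j_1$ are correct and worth keeping.
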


\begin{proof}

The case $j_1=j_2=1$ is proven in Lemma \ref{Lemma: GN-base case}.
We prove the case $(j_1,j_2) =(2,1),(1,2),(2,2)$ first.
Applying the case $j_1=j_2=1$ again, we have
\begin{align*}
||V^{l-1}\tb{u}||_{L^{2k/(p-1)}} \leq C||V^{l-2}\tb{u}||_{L^{2k/(p-2)}}^{1/2}
||V^l\tb{u}||_{L^{2k/p}}^{1/2}.
\end{align*}
Substituting in (\ref{eq: GN V l j1=1 j2=1}), we have
\begin{align*}
||V^{l}\tb{u}||_{L^{2k/p}} \leq C ||V^{l-2}\tb{u}||_{L^{2k/(p-2)}}^{1/4}
||V^l\tb{u}||_{L^{2k/p}}^{1/4} ||V^{l+1}y||_{L^{2k/(p+1)} }^{1/2},
\end{align*}
which is equivalent to 
\begin{align*}
||V^{l}\tb{u}||_{L^{2k/p}} \leq C ||V^{l-2}\tb{u}||_{L^{2k/(p-2)}}^{1/3} ||V^{l+1}y||_{L^{2k/(p+1)} }^{2/3}.
\end{align*}
The case $j_1=1,j_2=2$ is proved similarly. And the case $j_1=2,j_2=2$ is proved in the same manner as well, but substitute both 
$||V^{l-1}\tb{u}||_{L^{2k/(p-1)}}$ and $||V^{l+1}y||_{L^{2k/(p+1)} }$ using (\ref{eq: GN V l j1=1 j2=1}).

Suppose (\ref{eq: GN-l-j1,l+j2}) holds for $(j_1,j_2)$, next we prove it for $(j_1+1,j_2)$ and $(j_1,j_2+1)$, provided that they still satisfy restrictions in the proposition.

By the case $j_1=j_2=1$, we have
\begin{align*}
||V^{l-j_1}\tb{u}||_{L^{2k/(p-j_1)}} \leq C||V^{l-j_1-1}\tb{u}||^{1/2}_{L^{2k/(p-j_1-1)}}||V^{l-j_1+1}\tb{u}||^{1/2}_{L^{2k/(p-j_1+1)}}
\end{align*}
Substituting this to the induction hypothesis, we have
\begin{align} \label{eq: GN proof   03}
||V^{l}\tb{u}||_{L^{2k/p}} \leq C (||V^{l-j_1-1}\tb{u}||^{1/2}_{L^{2k/(p-j_1-1)}}||V^{l-j_1+1}\tb{u}||^{1/2}_{L^{2k/(p-j_1+1)}})^{\frac{j_2}{j_1+j_2}}
||V^{l+j_2}\tb{u}||_{ L^{2k/(p+j_2)} }^{\frac{j_1}{j_1+j_2}}.
\end{align}

Now applying the induction hypothesis again, we have
\begin{align*}
||V^{l-j_1+1}\tb{u}||_{L^{2k/(p-j_1+1)}}
\leq ||V^{l-j_1-1}\tb{u}||_{L^{2k/(p-j_1-1)}}^{ \frac{j_1-1}{j_1+1} }
||V^l\tb{u}||_{L^{2k/p}}^{\frac{2}{j_1+1}},
\end{align*}
which holds since $l-j_1+1-(l-j_1-1)=2,l-(l-j_1+1)=j_1-1<j_1$. Substituting this inequality to (\ref{eq: GN proof   03}), we have
\begin{align} \label{eq: GN proof   04}
\begin{split}
||V^{l}\tb{u}||_{L^{2k/p}} 
\leq & C ||V^{l-j_1-1}\tb{u}||^{\frac{1}{2}\times \frac{j_2}{j_1+j_2}}_{L^{2k/(p-j_1-1)}}
||V^{l-j_1-1}\tb{u}||_{L^{2k/(p-j_1-1)}}^{ \frac{1}{2} \times \frac{j_1-1}{j_1+1} \times \frac{j_2}{j_1+j_2} } 
\\&
||V^l\tb{u}||_{L^{2k/p}}^{\frac{1}{2} \times \frac{2}{j_1+1} \times \frac{j_2}{j_1+j_2}}
||V^{l+j_2}\tb{u}||_{ L^{2k/(p+j_2)} }^{\frac{j_1}{j_1+j_2}},
\end{split}
\end{align}
which is equivalent to
\begin{align*}
||V^{l}\tb{u}||_{L^{2k/p}}^{ \frac{j_1(j_1+1+j_2)}{(j_1+1)(j_1+j_2)} }
\leq C ||V^{l-j_1-1}\tb{u}||^{ \frac{j_1 j_2}{(j_1+1)(j_1+j_2)}}_{L^{2k/(p-j_1-1)}}
||V^{l+j_2}\tb{u}||_{ L^{2k/(p+j_2)} }^{\frac{j_1}{j_1+j_2}}.
\end{align*}
And this is equivalent to 
\begin{align*}
||V^{l}\tb{u}||_{L^{2k/p}}
\leq C ||V^{l-j_1-1}\tb{u}||^{ \frac{j_2}{j_1+1+j_2}}_{L^{2k/(p-j_1-1)}}
||V^{l+j_2}\tb{u}||_{ L^{2k/(p+j_2)} }^{\frac{j_1+1}{j_1+1+j_2}},
\end{align*}
which is the desired inequality with $(j_1,j_2)$ replaced by $(j_1+1,j_2)$.
The case with $(j_1,j_2)$ replaced by $(j_1,j_2+1)$ is proved in the same manner.
\end{proof}
In particular, taking $p=j_1=l$ and set $k=l+j_2$, we have the following important special case:
\begin{coro}  \label{coro- GN- Vl-Linfinity-Vk}
For $k \geq l$, we have
\begin{align*}
||V^l\tb{u}||_{L^{2k/p}} \leq C ||\tb{u}||_{L^\infty}^{1-\frac{l}{k}}
||V^k\tb{u}||_{L^2}^{l/k}.
\end{align*}
\end{coro}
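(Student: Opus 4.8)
The corollary is a direct specialization of the inequality~\eqref{eq: GN-l-j1,l+j2}; no new analytic input is needed, so the plan is simply to choose the parameters and check the hypotheses. First I would dispose of the degenerate cases. If $l=0$ then $V^0\tb u=\tb u$, the left side is $\|\tb u\|_{L^\infty}$ (reading the exponent $2k/p=2k/0$ as $\infty$ per the Remark following Lemma~\ref{Lemma: GN-base case}), the right side is $\|\tb u\|_{L^\infty}^{1}\|V^k\tb u\|_{L^2}^{0}=\|\tb u\|_{L^\infty}$, and the claim holds with $C=1$; similarly, if $l=k$ the claim reduces to $\|V^k\tb u\|_{L^2}\le C\|V^k\tb u\|_{L^2}$. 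So it remains to treat $1\le l\le k-1$.

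In that range I would apply~\eqref{eq: GN-l-j1,l+j2} with
$$ j_1=p=l,\qquad j_2=k-l, $$
so that $j_1,j_2\ge 1$. The standing hypotheses there, $j_1\le p$, $p\le k+1-j_2$, and $j_1\le l$, become $l\le l$, $l\le l+1$, and $l\le l$, hence all hold. Substituting these values, the exponent on the left is $2k/p=2k/l$ (the exponent written in the statement); the first factor on the right is $\|V^{l-j_1}\tb u\|_{L^{2k/(p-j_1)}}=\|V^0\tb u\|_{L^{2k/0}}=\|\tb u\|_{L^\infty}$, raised to the power $\tfrac{j_2}{j_1+j_2}=\tfrac{k-l}{k}=1-\tfrac lk$; and the second factor is $\|V^{l+j_2}\tb u\|_{L^{2k/(p+j_2)}}=\|V^k\tb u\|_{L^{2k/k}}=\|V^k\tb u\|_{L^2}$, raised to the power $\tfrac{j_1}{j_1+j_2}=\tfrac lk$. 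This is exactly the asserted inequality.

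The only points needing care are bookkeeping conventions already in force in this section: $V^0\tb u$ is read as $\tb u$ (the identity being the sole degree-zero monomial), and an $L^q$ exponent with vanishing denominator is read as $L^\infty$, as stipulated in the Remark after Lemma~\ref{Lemma: GN-base case}. I do not expect any genuine obstacle: all the substantive work — the vanishing $\int V f=0$ for $V$ a coordinate derivative or a rotation generator, the Hölder step, and the double induction on $(j_1,j_2)$ — has already been carried out in proving Lemma~\ref{Lemma: GN-base case} and~\eqref{eq: GN-l-j1,l+j2}.
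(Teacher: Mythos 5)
Your proposal is correct and matches the paper's own (one-line) derivation: the paper obtains the corollary precisely by taking $p=j_1=l$ and $k=l+j_2$ in \eqref{eq: GN-l-j1,l+j2}, which is exactly your choice of parameters. Your extra handling of the degenerate cases $l=0$ and $l=k$ and the $L^\infty$ convention is sound bookkeeping but adds nothing beyond what the paper implicitly assumes.
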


Next we estimate products.
\begin{lmm}  \label{lemma: GN-product of derivatives}
For $|\beta|+|\gamma|=k$, we have
\begin{align} \label{eq: GN- product of derivatives}
||(V^\beta \tb{u}_1) (V^\gamma \tb{u}_2)||_{L^2} 
\lesssim ||\tb{u}_1||_{L^\infty}||V^k \tb{u}_2||_{L^2} + 
||V^k \tb{u}_1||_{L^2}||\tb{u}_2||_{L^\infty}.
\end{align}
\end{lmm}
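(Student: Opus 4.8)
The plan is to prove Lemma~\ref{lemma: GN-product of derivatives} by the standard Gagliardo--Nirenberg interpolation device, using Corollary~\ref{coro- GN- Vl-Linfinity-Vk} as the engine. First I would note that the $L^2$ norm on the left is a product norm, so by H\"older's inequality with exponents $q = k/|\beta|$ and $q' = k/|\gamma|$ (noting $1/q + 1/q' = (|\beta|+|\gamma|)/k = 1$, and interpreting a zero exponent in an $L^\cdot$-space as $L^\infty$ per the remark following Lemma~\ref{Lemma: GN-base case}), I would bound
\begin{align*}
\|(V^\beta \tb{u}_1)(V^\gamma \tb{u}_2)\|_{L^2} \leq \|V^\beta \tb{u}_1\|_{L^{2k/|\beta|}} \, \|V^\gamma \tb{u}_2\|_{L^{2k/|\gamma|}}.
\end{align*}
Then I would apply Corollary~\ref{coro- GN- Vl-Linfinity-Vk} to each factor separately: with $l = |\beta|$ this gives $\|V^\beta \tb{u}_1\|_{L^{2k/|\beta|}} \lesssim \|\tb{u}_1\|_{L^\infty}^{1-|\beta|/k}\|V^k\tb{u}_1\|_{L^2}^{|\beta|/k}$, and with $l = |\gamma|$ it gives $\|V^\gamma \tb{u}_2\|_{L^{2k/|\gamma|}} \lesssim \|\tb{u}_2\|_{L^\infty}^{1-|\gamma|/k}\|V^k\tb{u}_2\|_{L^2}^{|\gamma|/k} = \|\tb{u}_2\|_{L^\infty}^{|\beta|/k}\|V^k\tb{u}_2\|_{L^2}^{|\gamma|/k}$, since $|\gamma| = k - |\beta|$.

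Multiplying these two estimates yields
\begin{align*}
\|(V^\beta \tb{u}_1)(V^\gamma \tb{u}_2)\|_{L^2} \lesssim \Big(\|\tb{u}_1\|_{L^\infty}\|V^k\tb{u}_2\|_{L^2}\Big)^{|\beta|/k} \Big(\|V^k\tb{u}_1\|_{L^2}\|\tb{u}_2\|_{L^\infty}\Big)^{|\gamma|/k}.
\end{align*}
Since $|\beta|/k + |\gamma|/k = 1$, the right-hand side is a weighted geometric mean of the two quantities $A := \|\tb{u}_1\|_{L^\infty}\|V^k\tb{u}_2\|_{L^2}$ and $B := \|V^k\tb{u}_1\|_{L^2}\|\tb{u}_2\|_{L^\infty}$, so by Young's inequality (or simply $A^\theta B^{1-\theta} \leq A + B$ for $\theta \in [0,1]$) it is bounded by $A + B$, which is exactly the claimed right-hand side of \eqref{eq: GN- product of derivatives}. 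One should also handle the degenerate endpoint cases $|\beta| = 0$ or $|\gamma| = 0$ directly: e.g. if $|\beta| = 0$ then the left side is $\|\tb{u}_1 \, V^\gamma\tb{u}_2\|_{L^2} \leq \|\tb{u}_1\|_{L^\infty}\|V^k\tb{u}_2\|_{L^2}$, which is even simpler.

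I do not anticipate a serious obstacle here; the only points requiring a little care are (i) making sure the H\"older exponents are the admissible ones for Corollary~\ref{coro- GN- Vl-Linfinity-Vk} (which requires $k \geq l$, satisfied since $l = |\beta| \leq k$ and $l = |\gamma| \leq k$), and (ii) the bookkeeping that the $L^\infty$ factors pair up correctly, i.e. that $\tb{u}_1$ appears to the power $1-|\beta|/k = |\gamma|/k$ in its $L^\infty$ norm while $\tb{u}_2$ appears to the power $1-|\gamma|/k = |\beta|/k$ — exactly complementary, which is what makes the geometric-mean-to-sum step work. The reduction to the interpolation inequality already proved makes this essentially mechanical.
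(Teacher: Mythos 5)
Your proof is correct and is essentially identical to the paper's: H\"older with exponents $2k/|\beta|$ and $2k/|\gamma|$, then Corollary~\ref{coro- GN- Vl-Linfinity-Vk} on each factor, then bounding the weighted geometric mean by the sum. (Minor bookkeeping note: the exponents on $A$ and $B$ in your displayed product should be $|\gamma|/k$ and $|\beta|/k$ respectively, but since they still sum to $1$ the final step $A^\theta B^{1-\theta}\leq A+B$ is unaffected.)
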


\begin{proof}
Denoting $|\beta|=l,|\gamma|=m,l+m=k$, by H\"older's inequality and Corollary \ref{coro- GN- Vl-Linfinity-Vk}, we have
\begin{align*}
& ||(V^\beta \tb{u}_1) (V^\gamma \tb{u}_2)||_{L^2} 
\\ \leq & ||V^\beta \tb{u}_1||_{ L^{2k/l} } ||V^\gamma \tb{u}_2||_{ L^{2k/m} }
\\ \lesssim & ||\tb{u}_1||_{L^\infty}^{1-l/k}
||V^k \tb{u}_1||_{L^2}^{l/k} ||\tb{u}_2||_{L^\infty}^{1-m/k}||V^k\tb{u}_2||_{L^2}^{m/k}.
\end{align*}
Since $1-l/k=m/k,1-m/k=l/k$, the right hand side is 
\begin{align*}
(||\tb{u}_1||_{L^\infty}||V^k\tb{u}_2||_{L^2})^{m/k}
(||\tb{u}_2||_{L^\infty}||V^k \tb{u}_1||_{L^2})^{l/k},
\end{align*}
which is controlled by the right hand side of (\ref{eq: GN- product of derivatives}).
\end{proof}

\begin{prop} \label{prop: bound, Vk product}
For $k$ as above, we have
\begin{align*}
||V^k(\tb{u}_1\tb{u}_2)|| \lesssim ||\tb{u}_1||_{L^\infty}||V^k \tb{u}_2||_{L^2} + ||V^k\tb{u}_1||_{L^2}||\tb{u}_2||_{L^\infty}.
\end{align*}
\end{prop}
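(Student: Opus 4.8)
The plan is to reduce to Lemma~\ref{lemma: GN-product of derivatives} via a Leibniz-type expansion of $V^k(\tb{u}_1 \tb{u}_2)$. Recalling the convention $\|V^k \tb{w}\|_{\mathcal{X}} = \sum_{|\alpha| \le k}\|V^\alpha \tb{w}\|_{\mathcal{X}}$, it suffices to bound $\|V^\alpha(\tb{u}_1 \tb{u}_2)\|_{L^2}$ for each monomial $V^\alpha$ of degree at most $k$. Among the generators making up $V$, the two families $D_{\zeta_j}$ and $\zeta_i D_{\zeta_j} - \zeta_j D_{\zeta_i}$ are first-order differential operators and so obey the exact Leibniz rule $W(\tb{u}_1\tb{u}_2) = (W\tb{u}_1)\tb{u}_2 + \tb{u}_1(W\tb{u}_2)$; the remaining generator $\la \zeta \ra$ acts by multiplication and simply passes to either factor, $\la \zeta \ra(\tb{u}_1\tb{u}_2) = (\la \zeta \ra \tb{u}_1)\tb{u}_2 = \tb{u}_1(\la \zeta \ra \tb{u}_2)$.

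First I would expand $V^\alpha(\tb{u}_1\tb{u}_2)$ by applying these two rules one generator at a time, starting from the one acting first. After distributing, each resulting summand has the form $(B_1 \tb{u}_1)(B_2 \tb{u}_2)$, where $B_1, B_2$ are ordered products of generators whose total degree equals $|\alpha| \le k$. The Leibniz rule for a derivation places that derivation to the left of whatever has already accumulated on a given factor, so the $B_i$ are certain specific monomials rather than arbitrary ones; but since the $\Psi^{0,0}$-span of $V$ is a Lie algebra, reordering the factors of each $B_i$ into a chosen normal form costs only a sum of strictly lower-degree monomials with bounded coefficients, which is harmless. Hence
\begin{align*}
V^\alpha(\tb{u}_1\tb{u}_2) = \sum_{|\beta| + |\gamma| \le |\alpha|} c_{\beta\gamma}\,(V^\beta \tb{u}_1)(V^\gamma \tb{u}_2),
\end{align*}
a finite sum with uniformly bounded coefficients $c_{\beta\gamma}$.

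Then I would apply Lemma~\ref{lemma: GN-product of derivatives} termwise: for a summand with $|\beta| + |\gamma| = k' \le k$, the lemma (used with $k'$ in place of $k$, which is legitimate since $k' \ge |\beta|$ and $k' \ge |\gamma|$) gives
\begin{align*}
\|(V^\beta \tb{u}_1)(V^\gamma \tb{u}_2)\|_{L^2} \lesssim \|\tb{u}_1\|_{L^\infty}\|V^{k'}\tb{u}_2\|_{L^2} + \|V^{k'}\tb{u}_1\|_{L^2}\|\tb{u}_2\|_{L^\infty},
\end{align*}
and since $\|V^{k'}\tb{w}\|_{L^2} \le \|V^{k}\tb{w}\|_{L^2}$ for $k' \le k$ (the left-hand side being a sub-sum of the right), every summand is bounded by $\|\tb{u}_1\|_{L^\infty}\|V^k\tb{u}_2\|_{L^2} + \|V^k\tb{u}_1\|_{L^2}\|\tb{u}_2\|_{L^\infty}$. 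Summing over the finitely many $\alpha$ with $|\alpha| \le k$, and over the finitely many $(\beta,\gamma)$ in each expansion, yields the assertion.

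I do not anticipate a serious obstacle, as this is a Moser/Kato--Ponce-type product inequality; the step demanding the most care is bookkeeping the Leibniz expansion when both the differential generators and the multiplication generator $\la \zeta \ra$ are present, and checking that putting the resulting operator strings into the normal form of Lemma~\ref{lemma: GN-product of derivatives} produces only controllable lower-order remainders, which is exactly what closure of the module under commutators provides.
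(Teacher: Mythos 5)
Your proof is correct and follows the same route as the paper: expand $V^k(\tb{u}_1\tb{u}_2)$ into terms $(V^\beta\tb{u}_1)(V^\gamma\tb{u}_2)$ (noting that $\la\zeta\ra$ is not a derivation but simply attaches to one factor) and apply Lemma~\ref{lemma: GN-product of derivatives}. The paper's proof is just a terser version of this; your extra bookkeeping about normal-forming the operator strings via closure under commutators is a valid elaboration of the same argument.
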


\begin{proof}
Although multiplying by $\la {\zeta} \ra$ does not satisfy the Leibniz's rule, but this does not affect the fact that $V^k(\tb{u}_1\tb{u}_2)$ is still a sum of terms of the form 
\begin{align*}
(V^\beta \tb{u}_1) (V^\gamma \tb{u}_2), |\beta|+|\gamma|=k.
\end{align*}
Applying Lemma \ref{lemma: GN-product of derivatives} finishes the proof.
\end{proof}

Now we return to the proof of Proposition \ref{prop: N[u] bound}.
The desired inequality is
\begin{align*}
||V^k ( \tilde{u}^{ \frac{p+1}{2} } \bar{\tilde{u}}^{ \frac{p-1}{2} }   ) ||_{L^2}
\lesssim ||\tilde{u}||_{L^\infty}^{p-1} ||V^k\tilde{u}||_{L^2}.
\end{align*}
This can be proved by a repeated application of  Proposition \ref{prop: bound, Vk product}. Whenever in the $||V^k(\cdot)||_{L^2}-$norm,
the power of $\tilde{u},\bar{\tilde{u}}$ is greater than one, we apply Proposition \ref{prop: bound, Vk product} to factor out one of $\tilde{u},\bar{\tilde{u}}$, and notice that 
\begin{align*}
||\tilde{u}||_{L^\infty}=||\bar{\tilde{u}}||_{L^\infty}, ||V^k\tilde{u}||_{L^2}=||V^k\bar{\tilde{u}}||_{L^2},
\end{align*}
we have the desired estimate.

\subsection{$L^p$ estimates}
Module regularity spaces have good decay properties in $L^r_z$ for $r > 2$, due to the linear growth of vector fields in the modules. We have 

\begin{lmm}\label{lem:decay}
Suppose that $W \in L^\infty_t \modulesp^1_{\SM_{t,0}}$. Then $W$ decays in suitable $L^r$ norms as $|t| \to \infty$ for suitable $r > 2$. More precisely,  for $2 < r < \infty$ when $n=1, 2$ and $2 < r \leq 6$ when $n=3$, we have 
\begin{equation}\label{eq:Lrest}
\| W(t, \cdot) \|_{L^r_z}  \leq C \ang{t}^{-n(1/2 - 1/r)}\| W \|_{L^\infty_t \modulesp^1_{\SM_{t,0}}}. 
\end{equation} 
Moreover, when $n=3$ and $|t| \geq 1$ we have 
\begin{equation}\label{eq:3/2}
W \in \ang{t}^{2/3} L^\infty_t L^{3/2}_z. 
\end{equation}
\end{lmm}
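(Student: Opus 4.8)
The plan is to exploit the Gagliardo--Nirenberg-type inequalities already established in Section~\ref{sec: module Gagliardo-Nirenberg}, in particular Corollary~\ref{coro- GN- Vl-Linfinity-Vk}, together with the relation \eqref{eq: module-1c norm relationship} between the $\modulesp^k_{\SM_{t,0}}$-norm and the weighted 1-cusp Sobolev norm of the pseudoconformally rescaled function $\mk{W}(t,\zeta) = \tilde W(t, 2t\zeta)$. First I would pass to $\mk W$, so that the linear growth of the generators $2tD_{z_j}$ and $z_j/(2t)$ becomes the (bounded) action of the generators $D_{\zeta_j}$, $\zeta_j$ on $\R^n_\zeta$. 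The key gain is the volume-form Jacobian: $\|W(t,\cdot)\|_{L^r_z} = (2|t|)^{n/r}\,\|\mk W(t,\cdot)\|_{L^r_\zeta}$, while on the $L^2$ side the same change of variables gives the $(2|t|)^{n/2}$ in \eqref{eq: module-1c norm relationship}. So it suffices to prove the \emph{$t$-independent} Sobolev embedding $\|\mk W\|_{L^r_\zeta} \lesssim \|\mk W\|_{\modulesp^1_{\SN}}$ for the stated range of $r$, and then reassemble the powers of $t$: the $L^\infty_t\modulesp^1_{\SM_{t,0}}$ bound controls $(2|t|)^{n/2}\|\mk W(t,\cdot)\|_{\modulesp^1_\SN}$, and dividing by $(2|t|)^{n/2}$ while multiplying by $(2|t|)^{n/r}$ from the $L^r_z$ Jacobian produces exactly the factor $\ang{t}^{-n(1/2-1/r)}$ claimed in \eqref{eq:Lrest} (with the usual harmless modification for $|t|\le 1/2$, where one uses $t+1$ in place of $t$).

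For the $t$-independent embedding itself, I would use Corollary~\ref{coro- GN- Vl-Linfinity-Vk} in the borderline form: taking $l=1$ and $k=1$ there does not directly give an $L^r$ bound, so instead I would argue that $\modulesp^1_{\SN}$, being equivalent to $H^{1,1}_{\oc}(\R^n_\zeta)$ by Proposition~\ref{prop: module0-1cusp equivalence}, embeds into the Euclidean Sobolev space $H^1(\R^n_\zeta)$ (the weight $x^{1}=\ang{\zeta}^{-1}$ only helps, since near infinity the 1-cusp vector fields $x^2\partial_x$, $x\partial_y$, $x^{-1}$ together dominate $\partial_\zeta$ and the constant function after multiplication by $x$). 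Sobolev embedding for $H^1(\R^n)$ then gives $L^r$ for $2\le r<\infty$ when $n=1,2$ and $2\le r\le 6=2n/(n-2)$ when $n=3$, which is precisely the stated range. The estimate \eqref{eq:3/2} for $n=3$ is the endpoint case $r=3/2<2$, which is \emph{not} covered by this embedding; there I would instead interpolate, or more simply use Hölder on $\R^3_\zeta$ against the fact that $\mk W \in L^2_\zeta$ and $\ang{\zeta}\mk W \in L^2_\zeta$ (both consequences of $\mk W \in \modulesp^1_\SN$, using the generator $\zeta_j$), noting $\ang{\zeta}^{-1}\in L^6(\R^3)$ so that $\|\mk W\|_{L^{3/2}_\zeta}\le \|\ang{\zeta}\mk W\|_{L^2_\zeta}\|\ang{\zeta}^{-1}\|_{L^6_\zeta}$; tracking the $(2|t|)^{n/2}=(2|t|)^{3/2}$ from \eqref{eq: module-1c norm relationship} against the $(2|t|)^{n\cdot 2/3}=(2|t|)^2$ Jacobian for $L^{3/2}_z$ yields the $\ang{t}^{2/3}$ weight in \eqref{eq:3/2}.

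The main obstacle I anticipate is purely bookkeeping: making sure the weight in the 1-cusp space is handled in the right direction. The space $\modulesp^1_\SN = H^{1,1}_{\oc}$ carries a \emph{growing} weight $x^{-1}=\ang\zeta$ in its definition (equivalently, it controls $\ang\zeta \mk W$ in $L^2$), so one must check that this makes the function space \emph{smaller}, hence the embedding into unweighted $H^1$ (and thus into $L^r_\zeta$) is legitimate — this is where Proposition~\ref{prop: module0-1cusp equivalence} and the explicit generator list \eqref{eq: weighted 1-cusp generator} are used, since multiplying each 1-cusp generator by $x$ recovers the generators defining the $H^{1,1}_{\oc}$ norm, and in particular $x\cdot x^{-1}=1$ and $x\cdot\partial_y$, $x\cdot x^2\partial_x$ are all bounded by Euclidean $\partial_\zeta$ near infinity. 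Once the direction of the weight is pinned down, the rest is Sobolev embedding and a change of variables, and the only remaining care is the treatment of small $|t|$, which the paper has already set up via the $t\mapsto t+1$ shift.
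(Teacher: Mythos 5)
Your proposal is correct and follows the same basic strategy as the paper: pass to the pseudoconformal variable $\zeta=z/(2t)$, observe that the $\modulesp^1_{\SM_{t,0}}$ norm controls $\ang{\zeta}W$, $D_\zeta W$ and $RW$ in $t^{-n/2}L^2(d\zeta)$, apply a Sobolev-type embedding in $\zeta$, and reassemble the Jacobian factors $t^{n/r}$ and $t^{-n/2}$; the $L^{3/2}$ statement is in both cases a H\"older argument against the weight $\ang{\zeta}$. The only genuine difference is the intermediate embedding: you go through the unweighted Euclidean $H^1(\R^n_\zeta)$ via the identification of $\modulesp^1_\SN$ with $H^{1,1}_{\oc}$ (Proposition~\ref{prop: module0-1cusp equivalence}), correctly checking that the growing weight and the radial/angular generators together dominate the full Euclidean gradient, whereas the paper performs the Sobolev embedding with respect to a polar-coordinate-adapted measure $d\mu$ and the radial/angular vector fields directly, inserting the compensating weight $\ang{\zeta}^{(n-1)/2}$ (which is where the restriction $n\le 3$ enters); the two routes are equivalent here and yield the same range of $r$. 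One small bookkeeping point: your H\"older split $\|\mk W\|_{L^{3/2}_\zeta}\le\|\ang\zeta\mk W\|_{L^2_\zeta}\|\ang\zeta^{-1}\|_{L^6_\zeta}$ combined with the Jacobians actually produces $t^{2}\cdot t^{-3/2}=t^{1/2}$, not $t^{2/3}$ as you assert; since $t^{1/2}\le t^{2/3}$ for $|t|\ge 1$ this still gives \eqref{eq:3/2} (indeed slightly more), so the conclusion stands, but the claimed exponent in your last step should be corrected.
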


\begin{proof}
We use the pseudoconformal variable $\zeta = z/(2t)$ and express the derivatives of $W$ in terms of $\zeta$, as in \eqref{eq:zetamodule} and Proposition~\ref{prop: module0-1cusp equivalence}. We see that, written in terms of $\zeta$, we have 
\begin{equation}\label{eq:West}\begin{aligned}
\ang{\zeta} W &\in t^{-n/2} L^2(d\zeta) \\
D_{\zeta} W, R W &\in t^{-n/2} L^2(d\zeta)
\end{aligned}\end{equation}
where $R$ is any of the rotation vector fields $\zeta_j D_{\zeta_k} - \zeta_k D_{\zeta_j}$. Let $\mu$ be a measure on $\R^n_\zeta$ that is equal to $d\zeta$ for $|\zeta| \leq 1$ and $d|\zeta| d\hat \zeta$ for $|\zeta| \geq 1$. In terms of this measure, we have (discarding unnecessary powers of $\zeta$)
\begin{equation}\begin{aligned}
 W &\in t^{-n/2} L^2(d\mu) \\
D_{\zeta} (\ang{\zeta}^{(n-1)/2}W), R (\ang{\zeta}^{(n-1)/2}W) &\in t^{-n/2} L^2(d\mu),
\end{aligned}\end{equation}
where the power $\ang{\zeta}^{(n-1)/2}$ adjusts for the difference of the $L^2$ spaces with respect to $d\zeta$ and $d\mu$, and we use the fact that $n \leq 3$. 

We now apply the Sobolev embedding theorem, using the vector fields $D_{\hat \zeta}$ and $D_{|\zeta|}$ (with slight abuse of notation) and the corresponding measure $d\mu$, to obtain 
\begin{equation}
(\ang{\zeta}^{(n-1)/2}W) \in t^{-n/2} L^r(d\mu)
\end{equation}
Since $r > 2$ this implies that we can return to the Euclidean measure $d\zeta$:
\begin{equation}
W \in t^{-n/2} L^r(d\zeta).
\end{equation}
Then changing back to the measure $dz$ we pick up a factor $t^{n/r}$ and we arrive at \eqref{eq:Lrest}. 

To prove \eqref{eq:3/2} we return to the first line of \eqref{eq:West} and weaken the estimate for $n=3$ to 
\begin{equation}\label{eq:L2decay}
\ang{\zeta}^{2/3} W \in t^{-3/2} L^2(d\zeta).
\end{equation}
We then apply H\"older's inequality as follows. For $|t| \geq 1$, 
\begin{equation}\begin{gathered}
\| W(t, \cdot) \|_{L^{3/2}}^{3/2} = \int |W(t, z)|^{3/2} dz \leq \int \Big( \ang{z}^{4/3} |W(t, z)|^2  \, dz \Big)^{3/4} \Big( \int \ang{z}^{-4} \, dz \Big)^{1/4} \\
\leq C  |t|^{9/4}  \Big( t^{4/3} \int \ang{\zeta}^{4/3} |W(t, \zeta)|^2  \, d\zeta \Big)^{3/4} \\
\leq C |t|, \quad \text{ using } \eqref{eq:L2decay},
\end{gathered}\end{equation}
which shows \eqref{eq:3/2}. 
\end{proof}


\section{The contraction mapping scheme}  \label{sec: contraction mapping}
\subsection{The integral equation formulation and Strichartz-type estimate}
We prove a module regularity version of the `easy endpoint' inhomogeneous Strichartz estimate, $\| u \|_{L^\infty_t L^2z} \leq C \| F \|_{L^1_t L^2_z}$, where $u$ is the advanced or retarded solution to $(D_t + \Delta_0) u = F$. 

\begin{lmm}\label{lem:symm}
(i) Let $u(t, z)$ be the solution to the equation 
\begin{equation}\begin{aligned}
(D_t + \Delta_0) u(t, z) &= F(t, z), \quad t \geq T.  \\
u(T, z) &= g(z), \quad g \in L^2(\R^n). 
\end{aligned}\end{equation}
Assume that $F \in L^1_t L^2_z$.  Then the solution $u$ is unique, and has the integral representation (Duhamel's formula)
\begin{equation}\label{eq:Duhamel}
u(t, \cdot) = e^{-i(t-T) \Delta_0} g + i \int_T^t e^{i(t-s)\Delta_0} F(s, \cdot) \, ds. 
\end{equation}

(ii) Assume in addition that, for some $k \geq 1$, $F \in L^1_t \modulesp^k_{\SM_t}$. Then $u \in L^\infty_t \modulesp^k_{\SM_t}$, and if $A_j = A_j(t)$ is a generator of $\SM_t$ for $1 \leq j \leq k$ and the integral in \eqref{eq:Duhamel} is contained in $|t| \geq 1/2$ or $|t| \leq 1/2$, then 
\begin{multline}\label{eq:Duhamelmod}
A_1(t) \dots A_k(t) u(t, \cdot) = e^{-i(t-T) \Delta_0} A_1(T) \dots A_k(T) g \\ + i \int_T^t e^{i(t-T-s)\Delta_0} A_1(s) \dots A_k(s) F(s, \cdot) \, ds. 
\end{multline}
As a consequence, we obtain the isometry property 
\begin{equation}\label{eq:isometry}
 \| u(t) \|_{\modulesp^k_{\SM_t}} = \| g \|_{\modulesp^k_{\SM_T}} 
\end{equation}
and the  `Strichartz-type estimate' 
\begin{equation}
 \| u \|_{L^\infty_t \modulesp^k_{\SM_t}} \leq  \| g \|_{\modulesp^k_{\SM_t}} + \| F \|_{L^1_t \modulesp^k_{\SM_t}}  .
 \label{eq:Duhamelbound}\end{equation}
\end{lmm}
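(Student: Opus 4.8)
The plan is to treat the two parts separately, with part (i) being essentially classical and part (ii) being a matter of commuting the vector fields of $\SM_t$ through the Duhamel formula. For part (i), I would note that uniqueness is the usual energy argument: if $u_1, u_2$ are two solutions with the same data and source, then $w = u_1 - u_2$ solves $(D_t + \Delta_0)w = 0$ with $w(T) = 0$, and since $\Delta_0$ is self-adjoint on $L^2(\R^n)$ the quantity $\|w(t)\|_{L^2}^2$ has vanishing time derivative (this is just $\partial_t \|w\|^2 = 2\Re\langle \partial_t w, w\rangle = 2\Re\langle i\Delta_0 w, w\rangle = 0$), so $w \equiv 0$; some care is needed to justify the differentiation, but with $F \in L^1_t L^2_z$ one can run this at the level of the integral equation directly. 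The representation formula \eqref{eq:Duhamel} is then verified by checking that the right-hand side solves the equation: $e^{-i(t-T)\Delta_0}g$ is the solution of the homogeneous problem, and differentiating the Duhamel integral under the integral sign produces $iF(t,\cdot)$ from the boundary term plus $\int_T^t e^{i(t-s)\Delta_0}(-i\Delta_0)(iF(s,\cdot))\,ds$, which cancels $\Delta_0$ applied to the integral; the $L^1_t L^2_z$ hypothesis makes all of this licit and gives $u \in C(\R_t; L^2)$, hence $u \in L^\infty_t L^2_z$.

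For part (ii), the key structural input is that each generator $A$ of $\SM_t$ (for $t$ in a region avoiding $|t| = 1/2$, so the generators are smooth in $t$ there) commutes with the free Schr\"odinger operator $D_t + \Delta_0$. Indeed the vector fields in \eqref{eq: generator Mt} — rotations $z_jD_{z_i} - z_iD_{z_j}$, the Galilean/pseudoconformal generators $2tD_{z_j} - z_j$, and translations $D_{z_j}$ — are precisely (part of) the Lie algebra of symmetries of the free equation, so $[D_t + \Delta_0, A(t)] = 0$ as operators acting on functions of $(t,z)$ (this uses the explicit $t$-dependence of $A(t)$: $\partial_t A(t)$ is itself a lower-order element, and the commutator $[\Delta_0, A(t)]$ exactly cancels it). Granting this, I would apply $A_1(t)\cdots A_k(t)$ to both sides of \eqref{eq:Duhamel}. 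On the homogeneous term, $A_1(t)\cdots A_k(t) e^{-i(t-T)\Delta_0} g = e^{-i(t-T)\Delta_0} A_1(T)\cdots A_k(T)g$ follows by commuting each $A_j(t)$ past the propagator; the precise statement is that $A(t)e^{-i(t-T)\Delta_0} = e^{-i(t-T)\Delta_0}A(T)$, which is exactly the symmetry property integrated in time. On the Duhamel integral, one commutes $A_1(t)\cdots A_k(t)$ inside the integral and past $e^{i(t-s)\Delta_0}$, landing on $A_1(s)\cdots A_s(s)F(s,\cdot)$ — here one must be slightly careful that the generators are evaluated at the running variable $s$ after passing the propagator $e^{i(t-s)\Delta_0}$, which again is the symmetry identity. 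This yields \eqref{eq:Duhamelmod}. Taking $F = 0$ in \eqref{eq:Duhamelmod}, squaring, and summing over all $A_1, \ldots, A_k \in \SM_t$, the $L^2$-unitarity of $e^{-i(t-T)\Delta_0}$ gives \eqref{eq:isometry}; and for general $F$, applying the $L^2 \to L^\infty_t L^2$ bound of part (i) to each $A_1(t)\cdots A_k(t)u$ term by term, together with Minkowski's inequality to pull the $L^1_t$ norm outside, gives \eqref{eq:Duhamelbound}.

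The main obstacle I anticipate is the bookkeeping around the $t$-dependence of the generators and the splitting at $|t| = 1/2$. The symmetry identity $A(t)e^{-i(t-T)\Delta_0} = e^{-i(t-T)\Delta_0}A(T)$ is clean when $T, t$ lie in the same region ($|t| \geq 1/2$ or $|t| \leq 1/2$), which is why the hypothesis restricts the integral in \eqref{eq:Duhamel} to one such region; crossing $|t| = 1/2$ would require re-expressing the generators, but since the two definitions at $t = \pm 1/2$ give equivalent norms one can patch the argument if needed, though the lemma as stated sidesteps this. A secondary technical point is justifying that the commutation of unbounded operators past the integral and propagator is rigorous rather than formal: the cleanest route is to first establish everything for $g, F$ Schwartz (or in the intersection of all relevant module spaces), where all manipulations are manifestly valid and $u$ is smooth, and then pass to the general case by density in $\modulesp^k_{\SM_t}$, using the a priori bound \eqref{eq:Duhamelbound} itself to control the limit. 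The remaining steps — verifying each of the three listed intertwining-type identities for the specific vector fields, and the Minkowski inequality argument — are routine.
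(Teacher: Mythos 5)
Your proposal is correct and follows essentially the same route as the paper: establish the commutation $[D_t+\Delta_0, A(t)]=0$ for the generators of $\SM_t$, deduce \eqref{eq:Duhamelmod} (the paper does this by noting $A_1\cdots A_k u$ solves the inhomogeneous problem with source $A_1\cdots A_k F$ and data $A_1(T)\cdots A_k(T)g$ and invoking part (i), which is the same content as your direct commutation through the Duhamel formula), work first with Schwartz data and pass to the general case by density, and handle the $|t|=1/2$ issue by splitting into the three regions with equivalent norms at the overlap. The only blemish is the typo $A_s(s)$ for $A_k(s)$.
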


\begin{proof} 
(i) The formula is standard. Uniqueness follows from noting that the difference of two solutions is a global solution to $(D_t + \Delta_0) u = 0$ that vanishes for $t < T$. Mass conservation implies that $u$ vanishes identically.

(ii) For $k=0$, formula \eqref{eq:Duhamel} shows that $u \in L^\infty_t L^2_z$. For $k \geq 1$, first assume that $F$ is an $L^1$ function of $t$ with values in Schwartz functions of $z$, and $g$ is Schwartz. Then $u$ is $L^\infty$ in time with values in Schwartz functions, and we can compute 
$$
 (D_t + \Delta_0) A_1(t) \dots A_k(t) u(t, \cdot) = A_1(t) \dots A_k(t) (D_t + \Delta_0) u(t, \cdot) = A_1(t) \dots A_k(t) F(t, \cdot)
 $$
 since $D_t + \Delta_0$ commutes with the generators of $\SM_t$. By assumption, $A_1(t) \dots A_k(t) F(t, \cdot)$ is in $L^1_t L^2_z$ so applying part (i) we have $A_1(t) \dots A_k(t) u(t, \cdot) \in L^\infty_t L^2_z$. We also have an initial condition, namely $A_1(t) \dots A_k(t) u(t, \cdot)$ restricted to $t=T$ is $A_1(T) \dots A_k(T) g$ so we have \eqref{eq:Duhamelmod}. This shows that, under the Schwartz assumption as well as the restriction on the range of $t$-integration, we have \eqref{eq:Duhamelbound}. 
A standard density argument allows one to extend the result to the whole space $(g, F) \in \modulesp^k_{\SM_t} \oplus L^1_t \modulesp^k_{\SM_t}$. 

Finally to remove the restriction on the interval of $t$-integration, we apply the argument to the intervals $t \geq 1/2$, $|t| \leq 1/2$ and $t \leq -1/2$ separately, recalling that the two different sets of generators at $t = \pm 1/2$ give rise to equivalent norms. 
 \end{proof}

We next claim that the integral formulation of the final-state problem for NLS \eqref{eq:NLS} is as follows:
given final state data $f \in \modulesp^k_{\SN}$, find $u \in L^\infty_t \modulesp^k_{\SM_t}$ such that $Vu \mp |u|^{p-1} u  \in L^1_t \modulesp^k_{\SM_t}$, and 
\begin{align} \label{eq: NLS, integral version}
u(t,\cdot) = (\mathcal{P}_0f)(t,\cdot) + i \int_t^\infty  e^{-i (t-s) \Delta_0}\big(Vu(s, \cdot) \mp |u|^{p-1}u(s, \cdot)\big) \, ds
\end{align}
where the sign depends on the sign in \eqref{eq:NLS}, and we recall that the Poisson operator $\mathcal{P}_0$ is defined in \eqref{eq: Poisson definition}. This is seen as follows: first, the solution to this integral equation formally satisfies \eqref{eq:NLS}, and satisfies it classically if $f$ is sufficiently regular (for example, Schwartz). Second, the $L^1_t$ condition on $Vu \mp |u|^{p-1} u$ means that for large $t$, the $L^1_t$-norm over the  interval of integration, $[t, \infty)$, tends to zero. Therefore, the norm in $L^\infty_t \modulesp^k_{\SM_t}$ of the contribution of the second term on the RHS in \eqref{eq: NLS, integral version} tends to zero as $t \to \infty$, using \eqref{eq:Duhamelbound}. Multiplying by $e^{-i|z|^2/4t}$ this contribution tends to zero in $\modulesp^k_{\SM_{t,0}}$, and changing to the variable $\zeta = z/(2t)$ shows that the contribution to the final state data is also zero. It follows that the final state data arises only from the Poisson term, and is therefore precisely $f$. 

We define the nonlinear map 
\begin{align} \label{defn: Phi definition}
\Phi(u)(t, \cdot) = (\mathcal{P}_0f)(t,\cdot) + i \int_t^\infty  e^{-i (t-s) \Delta_0} \big(Vu(s) \mp |u|^{p-1}u(\cdot,s) \big) ds. 
\end{align}
In the next subsection, we show, for either choice of sign, that $\Phi$ is a contraction map on a certain function space on $[S,\infty) \times \R^n$ we define below, provided that $S$ is sufficiently large (depending on $f$). This will give us a solution $u$ to (\ref{eq:NLS}) on $[S,\infty) \times \R^n$.

\subsection{The contraction mapping argument}\label{subsec:contraction}
As mentioned in the introduction, we will solve the equation for the large time first.
For $S>0$ to be determined later, first we define the space on which we run the contraction mapping argument. Let
\begin{align*}
K: = ||\mathcal{P}_0f||_{L^\infty_t\modulesp^k_{\mathcal{M}_t}}([S,\infty)\times \R^n).
\end{align*}
Then we work on a ball $X$ centred at  the origin in  $L^\infty_t \modulesp^k_{\mathcal{M}_t}([S,\infty)\times \R^n)$:
\begin{align*}
X:= \{ u  \mid \; \|u\|_{L^\infty_t\modulesp^k_{\mathcal{M}_t}([S,\infty)\times \R^n)} \leq 2K \}.
\end{align*}

The space $X$ is a complete metric space with respect to the distance
\begin{align*}
{d}(u,v) =  ||u-v||_{L^\infty_t \modulesp_{\mathcal{M}_t}^k}.
\end{align*}

Using Proposition \ref{prop: pulling out the oscillatory factor}, the definition of $X$ can be rephrased as (with $\tilde{u},\tilde{v}$ as in \eqref{eq: tilde u definition}, i.e.\ with the oscillatory factor removed):
\begin{align*}
X:= \{ u  \mid   \|\tilde{u}\|_{L^\infty_t\modulesp^k_{\mathcal{M}_{t,0}}([S,\infty)\times \R^n)} \leq 2K\},
\end{align*}
with distance
\begin{align*}
{d}(u,v) =  ||\tilde{u}-\tilde{v}||_{L^\infty_t \modulesp_{\mathcal{M}_{t,0}}^k}. 
\end{align*}
Changing variable to $\zeta = z/(2t)$, that is, setting 
\begin{align*}
\tilde{ \mk{u} }(t,{\zeta}) = \tilde{u}(t,2t{\zeta}),  \quad 
\tilde{ \mk{v} }(t,{\zeta}) = \tilde{v}(t,2t{\zeta}),
\end{align*}
and using (\ref{eq: module-1c norm relationship}), we have
\begin{align*}
||u(t,z)||_{\modulesp^k_{\mathcal{M}_t}} = ||(2t)^{n/2} \tilde{\mk{u}}(t,{\zeta})||_{H_{\oc}^{k,k}(\R^n_{{\zeta}})}
=  (2t)^{n/2}|| \tilde{\mk{u}}(t,{\zeta})||_{H_{\oc}^{k,k}(\R^n_{{\zeta}})},
\end{align*}
which means we can also express the distance using the weighted 1-cusp Sobolev norm: 
\begin{align*}
{d}(u,v) =  \sup_{t \geq S} (2t)^{n/2}||\tilde{\mk{u}}-\tilde{\mk{v}}||_{H_{\oc}^{k,k}(\R^n_{{\zeta}})}.
\end{align*}
Next we prove the contraction property of $\Phi$ in (\ref{defn: Phi definition}).

\begin{prop} \label{prop: contraction property} 
For $S$ large enough, $\Phi$ maps $X$ to itself, and is a contraction:
\begin{align*}
||\Phi(u)-\Phi(v)||_{L^\infty_t \modulesp_{\mathcal{M}_t}^k} \leq \frac1{2}
||u-v||_{L^\infty_t \modulesp_{\mathcal{M}_t}^k}.
\end{align*}

\end{prop}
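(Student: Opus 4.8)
The plan is to run the contraction estimate through the ``backward'' version of the Duhamel bound. The operator $F\mapsto i\int_t^\infty e^{-i(t-s)\Delta_0}F(s,\cdot)\,ds$ obeys, by time reversal, exactly the estimate of Lemma~\ref{lem:symm}(ii): it maps $L^1_t\modulesp^k_{\SM_t}([S,\infty)\times\R^n)$ boundedly, with constant $1$, into $L^\infty_t\modulesp^k_{\SM_t}([S,\infty)\times\R^n)$; call this (D). Applying (D) to \eqref{defn: Phi definition} with $F=Vu\mp|u|^{p-1}u$, and to $\Phi(u)-\Phi(v)$ with the corresponding differences, reduces the whole proposition to bounding $\|Vu\mp|u|^{p-1}u\|_{L^1_t\modulesp^k_{\SM_t}([S,\infty))}$ by $C\varepsilon(S)(K+K^p)$ and $\|V(u-v)\mp(|u|^{p-1}u-|v|^{p-1}v)\|_{L^1_t\modulesp^k_{\SM_t}([S,\infty))}$ by $C\varepsilon(S)(1+K^{p-1})\,d(u,v)$, with $\varepsilon(S)\to0$ as $S\to\infty$. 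One point to flag at the outset: since $\mathcal{P}_0 f=U_0(t)\mathcal{F}f$ and $U_0(t)$ is an isometry on the module regularity spaces (see \eqref{eq:isometry}), the number $K$ does \emph{not} depend on $S$ --- it depends only on $f$ --- so the smallness used in the contraction is produced entirely by taking $S$ large, not by any smallness of the data.

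The pointwise-in-$t$ inputs I would assemble are three. First, an $L^\infty$-decay estimate: removing the oscillatory factor and passing to $\zeta=z/(2t)$ identifies $\modulesp^k_{\SM_{t,0}}$ with $(2t)^{n/2}H^{k,k}_{\oc}(\R^n_\zeta)$ by \eqref{eq: module-1c norm relationship}, and $H^{k,k}_{\oc}\hookrightarrow L^\infty$ for $k\ge2>n/2$ (the embedding behind the $L^\infty$-embedding of $\modulesp^k_{\SN}$), so $\|w(t,\cdot)\|_{L^\infty_z}=\|\tilde w(t,\cdot)\|_{L^\infty_z}\lesssim\ang{t}^{-n/2}\|w(t,\cdot)\|_{\modulesp^k_{\SM_t}}$ for any $w$; in particular $\|u(t,\cdot)\|_{L^\infty},\|v(t,\cdot)\|_{L^\infty}\lesssim\ang{t}^{-n/2}K$ on $X$. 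Second, the potential term: since $V$ is a multiplication operator, $\widetilde{Vu}=V\tilde u$, so Proposition~\ref{prop: pulling out the oscillatory factor} together with the dispersive multiplication estimate Proposition~\ref{prop: module regularity, multiplication rule} gives $\|Vu(t,\cdot)\|_{\modulesp^k_{\SM_t}}\lesssim\ang{t}^{-n/2}\|V(t,\cdot)\|_{\modulesp^k_{\SM_{t,0}}}\|\tilde u(t,\cdot)\|_{\modulesp^k_{\SM_{t,0}}}\lesssim\ang{t}^{-1-n/2}K$ (using $V\in\ang{t}^{-1}L^\infty_t\modulesp^k_{\SM_{t,0}}$ and $\|\tilde u(t,\cdot)\|_{\modulesp^k_{\SM_{t,0}}}\le2K$), and likewise with $u-v$ replacing $u$. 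Third, the nonlinearity: Proposition~\ref{prop: N[u] bound} directly gives $\||u|^{p-1}u(t,\cdot)\|_{\modulesp^k_{\SM_t}}\lesssim\|u(t,\cdot)\|_{L^\infty}^{p-1}\|u(t,\cdot)\|_{\modulesp^k_{\SM_t}}\lesssim\ang{t}^{-n(p-1)/2}K^p$; for the \emph{difference}, since $p$ is odd, $|u|^{p-1}u-|v|^{p-1}v=u^{(p+1)/2}\ubar^{(p-1)/2}-v^{(p+1)/2}\bar v^{(p-1)/2}$ expands into a sum of degree-$p$ monomials in $u,\ubar,v,\bar v$ each carrying exactly one factor $u-v$ or $\ubar-\bar v$, and each monomial is estimated by the same Gagliardo--Nirenberg argument that proves Proposition~\ref{prop: N[u] bound} (iterating Propositions~\ref{prop: bound, Vk product} and \ref{prop: module regularity, multiplication rule}, keeping one factor in $\modulesp^k$ and all the rest in $L^\infty$, and absorbing the term carrying $\|u-v\|_{L^\infty}$ via the first input); this yields $\||u|^{p-1}u-|v|^{p-1}v\|_{\modulesp^k_{\SM_t}}\lesssim(\|u(t,\cdot)\|_{L^\infty}+\|v(t,\cdot)\|_{L^\infty})^{p-1}\|u(t)-v(t)\|_{\modulesp^k_{\SM_t}}\lesssim\ang{t}^{-n(p-1)/2}K^{p-1}d(u,v)$.

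To finish, I would note that both weights $\ang{t}^{-1-n/2}$ and $\ang{t}^{-n(p-1)/2}$ are integrable at $t=\infty$ --- the latter because $\tfrac{n(p-1)}{2}>1$ for every $(n,p)$ allowed by \eqref{eq:np} --- so integrating the pointwise bounds over $[S,\infty)$ produces a factor $\varepsilon(S)=\int_S^\infty(\ang{s}^{-1-n/2}+\ang{s}^{-n(p-1)/2})\,ds$ with $\varepsilon(S)\to0$. Combining with (D) gives $\|\Phi(u)-\mathcal{P}_0 f\|_{L^\infty_t\modulesp^k_{\SM_t}([S,\infty))}\le C\varepsilon(S)(K+K^p)$ and $d(\Phi(u),\Phi(v))\le C\varepsilon(S)(1+K^{p-1})\,d(u,v)$; choosing $S$ so large (depending only on $K$, hence on $f$) that $C\varepsilon(S)(K+K^p)\le K$ and $C\varepsilon(S)(1+K^{p-1})\le\tfrac12$ then yields $\|\Phi(u)\|_{L^\infty_t\modulesp^k_{\SM_t}}\le\|\mathcal{P}_0 f\|+K=2K$, i.e.\ $\Phi(X)\subset X$, and the stated contraction estimate.

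The step I expect to be the main obstacle is the nonlinear difference estimate in the third input: extending Proposition~\ref{prop: N[u] bound} from $|u|^{p-1}u$ to the polynomial difference $|u|^{p-1}u-|v|^{p-1}v$. It is not deep, but requires care --- one must expand into monomials, retain exactly one factor of $u-v$ (or $\ubar-\bar v$), distribute the remaining $p-1$ factors with all-but-one in $L^\infty$, and check that the ``off-diagonal'' monomials (where the $\modulesp^k$ factor is not $u-v$) get absorbed using $\|u-v\|_{L^\infty}\lesssim\ang{t}^{-n/2}\|u-v\|_{\modulesp^k_{\SM_t}}$, so that every term acquires the same weight $\ang{t}^{-n(p-1)/2}$. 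The remaining subtlety is conceptual rather than computational: the argument deliberately does not ask the data to be small, so one must verify that the required smallness genuinely comes from the $L^1_t([S,\infty))$-tail and that $K$ is truly $S$-independent.
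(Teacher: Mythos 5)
Your proposal is correct and follows essentially the same strategy as the paper: reduce via the Duhamel bound \eqref{eq:Duhamelbound} to an $L^1_t\modulesp^k_{\SM_t}([S,\infty))$ estimate on $V(u-v)$ and on the difference of nonlinearities, obtain pointwise-in-$t$ decay $\ang{t}^{-1-n/2}$ and $\ang{t}^{-n(p-1)/2}$ respectively, and win by integrability of the tail for all $(n,p)$ in \eqref{eq:np}, with $K$ independent of $S$. The only (immaterial) variation is in the nonlinear difference: the paper factors $N[\tilde u]-N[\tilde v]$ exactly as you do but then estimates each monomial by iterating the dispersive multiplication rule of Proposition~\ref{prop: module regularity, multiplication rule}, keeping every factor in $\modulesp^k_{\SM_{t,0}}$, rather than routing through the Gagliardo--Nirenberg product bound with all but one factor in $L^\infty$; both give the same weight $\ang{t}^{-n(p-1)/2}$ and constant $O(K^{p-1})$, so the step you flagged as the main obstacle is handled in the paper without needing any extension of Proposition~\ref{prop: N[u] bound}.
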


\begin{proof} We write $N[u]$ for $\pm |u|^{p-1} u$. 
Applying \eqref{eq:Duhamelbound} to $F=N[u]-N[v]$, we have
\begin{align*}
||\Phi(u)-\Phi(v)||_{L^\infty_t \modulesp_{\mathcal{M}_t}^k}
 \leq ||N[u]-N[v]||_{L^1_t\modulesp_{\mathcal{M}_t}^k} + ||V (u - v)||_{L^1_t\modulesp_{\mathcal{M}_t}^k},
\end{align*}
so we only need to show that
\begin{align}
||N[u]-N[v]||_{L^1_t\modulesp_{\mathcal{M}_t}^k} + ||V (u - v)||_{L^1_t\modulesp_{\mathcal{M}_t}^k} \leq \frac1{2} ||u-v||_{L^\infty_t \modulesp_{\mathcal{M}_t}^k}.
\label{eq:wts}\end{align}
It is clear that 
\begin{align*}
||N[u]-N[v]||_{L^1_t\modulesp_{\mathcal{M}_t}^k}
= ||N[\tilde{u}]-N[\tilde{v}]||_{L^1_t\modulesp_{\mathcal{M}_{t,0} }^k}
\end{align*}

Notice that
\begin{align*} 
\pm(N[\tilde{u}] - N[\tilde{v}]) = & \tilde{u}^{\frac{p+1}{2}}\bar{\tilde{u}}^{\frac{p-1}{2}} - \tilde{v}^{\frac{p+1}{2}}\bar{\tilde{v}}^{\frac{p-1}{2}}
\\ = & (\tilde{u}-\tilde{v})(\sum_{k=0}^{\frac{p-1}{2}} (\tilde{u} \bar{\tilde{u}})^{\frac{p-1}{2}-k}(\tilde{v}\bar{\tilde{v}})^k)
+ (\bar{\tilde{u}}-\bar{\tilde{v}})\tilde{u}\tilde{v}(\sum_{k=0}^{\frac{p-3}{2}} (\tilde{u} \bar{\tilde{u}})^{\frac{p-3}{2}-k}(\tilde{v}\bar{\tilde{v}})^k ).
\end{align*}

Then for fixed $t$, applying Proposition \ref{prop: module regularity, multiplication rule}, we know 
\begin{align*}
||N[\tilde{u}] - N[\tilde{v}]||_{\modulesp_{\mathcal{M}_{t,0}}^k} & \lesssim
(2t)^{-n/2} 
||\tilde{u}-\tilde{v}||_{\modulesp_{\mathcal{M}_{t,0} }^k}
(||\sum_{k=0}^{\frac{p-1}{2}} (\tilde{u} \bar{\tilde{u}})^{\frac{p-1}{2}-k}(\tilde{v}\bar{\tilde{v}})^k)||_{ \modulesp_{\mathcal{M}_{t,0}}^k } 
\\ & + ||\tilde{u}\tilde{v}(\sum_{k=0}^{\frac{p-3}{2}} (\tilde{u} \bar{\tilde{u}})^{\frac{p-3}{2}-k}(\tilde{v}\bar{\tilde{v}})^k||_{\modulesp_{\mathcal{M}_{t,0}}^k})
\\ & \lesssim (2t)^{-n(p-1)/2} 
||\tilde{u}-\tilde{v}||_{\modulesp_{\mathcal{M}_{t,0} }^k}
(||\tilde{u}||_{\modulesp_{\mathcal{M}_{t,0} }^k}+||v||_{\modulesp_{\mathcal{M}_{t,0} }^k})^{p-1}
\\ & \leq (2t)^{-n(p-1)/2} 
||\tilde{u}-\tilde{v}||_{\modulesp_{\mathcal{M}_{t,0} }^k}
(4K)^{p-1}.
\end{align*}

Thus we have
\begin{align*}
||N[u] - N[v]||_{L^1_t\modulesp_{\mathcal{M}_t}^k} & \lesssim ||u-v||_{L^\infty_t\modulesp_{\mathcal{M}_t}^k} 
\int_S^\infty t^{-(p-1)n/2}dt
\\ & \lesssim ||u-v||_{L^\infty_t\modulesp_{\mathcal{M}_t}^k} S^{1-\frac{n(p-1)}{2}}.
\end{align*}
This shows that the first term in \eqref{eq:wts} is small when $S$ is large. We next consider the second term. This can be estimated, for each fixed time $t \geq 1/2$, by  
\begin{equation}\label{eq: pot term bound}
\begin{aligned} ||V(u - v)||_{\modulesp^k_{\mathcal{M}_t}} &= \| V(\tilde u - \tilde v) \|_{\modulesp^k_{\SM_{t,0}^k}} \\
&\leq t^{-1-n/2} \| tV \|_{L^\infty_t \modulesp^k_{\SM_{t,0}^k}} \| (\tilde u - \tilde v) \|_{L^\infty_t \modulesp^k_{\SM_{t,0}^k}} \\
&\leq C t^{-1-n/2} \| (\tilde u - \tilde v) \|_{L^\infty_t \modulesp^k_{\SM_{t,0}^k}}.
\end{aligned}\end{equation}
This gives a bound on $\| V(u - v) \|_{L^1_t \modulesp^k_{\SM_t}}$ on the interval $[S, \infty)$ of $C S^{-n/2} \| (\tilde u - \tilde v) \|_{L^\infty_t \SM_{t,0}^k}$. Thus choosing $S$ sufficiently large, we achieve the bound \eqref{eq:wts}.

Next we verify that $\Phi$ maps $X$ to itself. We need to show:
\begin{align*}
\||\Phi(u)||_{L^\infty_t\tilde{\modulesp_z^k}} \leq 2K.
\end{align*}
Using similar bounds as above, we obtain 
\begin{equation}\begin{aligned}
||N[u]||_{L^1_t\modulesp_{\mathcal{M}_t}^k}  &\lesssim  
||u||_{L^\infty_t\modulesp_{\mathcal{M}_t}^k} S^{1-\frac{n(p-1)}{2}}, \\
||Vu ||_{L^1_t\modulesp_{\mathcal{M}_t}^k}  &\lesssim  
||u||_{L^\infty_t\modulesp_{\mathcal{M}_t}^k} S^{-\frac{n}{2}}.
\end{aligned}\end{equation}
Therefore, for sufficiently large $S$, we have 
\begin{equation}\label{eq:Phi estimate}\begin{aligned}
||\Phi(u)||_{L^\infty_t\modulesp_{\mathcal{M}_t}^k} 
& \leq ||\mathcal{P}_0f||_{L^\infty_t\modulesp_{\mathcal{M}_t}^k} + 
||\int_t^\infty e^{-i(t-s) \Delta} (N[u] + Vu)(\cdot,s) ds ||_{L^\infty_t\modulesp_{\mathcal{M}_t}^k}
\\ & \leq ||\mathcal{P}_0f||_{L^\infty_t\modulesp_{\mathcal{M}_t}^k} + ||N[u] + Vu||_{L^1_t\modulesp_{\mathcal{M}_t}^k}  
\\  &\leq K + C K (S^{1-\frac{n(p-1)}{2}} + S^{-\frac{n}{2}}) \\
& \leq 2K,
\end{aligned}\end{equation}
where we used the fact that under the restriction \eqref{eq:np}, the exponents of $S$ in the second last line are strictly negative. (This is one reason why we are not able to treat $n=1, p=3$.) 
\end{proof}

\subsection{Convergence}
We next address the convergence of the solution to its final state. To do so we pass to the pseudoconformal transform $U$ of $u$, defined by 
\begin{equation}
U(\tb{t}, \zeta) = (4\pi i t)^{n/2} e^{-it |\zeta|^2} u(t, 2t\zeta) = (4\pi i t)^{n/2} \mk u(\zeta, t), \quad \zeta = \frac{z}{2t}, \quad \tb{t} = \frac1{4t}. 
\end{equation}
We write $\tb{t} = 1/(4t)$ for the pseudoconformal time variable. Thus we are investigating the convergence of $U(\tb{t}, \cdot)$ to $f$ in the topology of $\modulesp^k_{\SN}$ as $\tb{t} \to 0$. 

We prove the following result, which addresses the convergence statement in Theorem~\ref{thm:main1}, and provides further details in case we have extra regularity of the final state data $f$. 

\begin{prop} \label{prop: exist solution for large time}
For $S$ large enough, there exists a unique $u \in X$ solving (\ref{eq: NLS, integral version}) on the time interval $[S, \infty)$. In particular, we have
\begin{align}\label{eq:u in module space}
||u(t)||_{\modulesp_{\mathcal{M}_t}^k} \leq 2K,
\end{align}
for any $t \in [S,\infty)$. And we have the convergence to its final state:
\begin{align} \label{eq: convergence of final state}
||(4\pi i t)^{n/2} e^{-i\frac{|z|^2}{4t}} u - f(\frac{z}{2t})||_{\modulesp_{\mathcal{M}_t}^{k} } \rightarrow 0, \; t \rightarrow \infty,
\end{align}
which is equivalent (see \eqref{eq: module-1c norm relationship}) to the statement 
\begin{align} \label{eq: convergence of final state 2}
\| U(\tb{t}, \zeta) - f(\zeta) \|_{\modulesp^k_{\SN}} = \| U(\tb{t}, \zeta) - f(\zeta) \|_{H^{k,k}_{\oc}} \to 0, \text{ as } \tb{t} \to 0. 
\end{align}

Moreover, we have a \emph{rate} of convergence of the final state as $t \to +\infty$ in the weaker topology of $\modulesp^{k-2}_{\SM_t}$:
\begin{align} \label{eq: convergence of final state, with rate}
||(4\pi i t)^{n/2} e^{-i\frac{|z|^2}{4t}} u - f(\frac{z}{2t})||_{\modulesp_{\mathcal{M}_t}^{k-2} } \leq Ct^{-\gamma}, t \in [S,\infty)
\end{align}
where $\gamma$ is as in \eqref{eq:rateofconv}, 
which in terms of weighted 1-cusp Sobolev spaces is equivalent to 
\begin{align} \label{eq: convergence of final state 2}
||(4\pi i t)^{n/2}e^{-it|\zeta|^2}u(t,2t\zeta) - f(\zeta)||_{H^{k-2,k-2}_{\oc}(\R_\zeta^n)} \leq Ct^{-\gamma}, t \in [S,\infty).
\end{align}
Moreover, in the 1-cusp formulation, we can interpolate through non-integral orders: for every $\epsilon \in [0, 2]$ we have \begin{align} \label{eq: convergence of final state, epsilon}
||(4\pi i t)^{n/2}e^{-it|\zeta|^2}u(t,2t\zeta) - f(\zeta)||_{H^{k-\epsilon,k-\epsilon}_{\oc}(\R_\zeta^n)} \leq Ct^{-\gamma\frac{\epsilon}{2}}.
\end{align}
\end{prop}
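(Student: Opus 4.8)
The plan has three main parts. First, the existence and uniqueness of $u \in X$ solving \eqref{eq: NLS, integral version} on $[S,\infty)$, together with \eqref{eq:u in module space}, is immediate: Proposition~\ref{prop: contraction property} exhibits $\Phi$ as a contraction of the complete metric space $X$ into itself, so the Banach fixed point theorem provides a unique fixed point $u \in X$, and \eqref{eq:u in module space} is just the defining bound of $X$. For the convergence statements I would write $u = \mathcal{P}_0 f + w$ with
\[
w(t,\cdot) = i\int_t^\infty e^{-i(t-s)\Delta_0}\big(Vu \mp |u|^{p-1}u\big)(s,\cdot)\,ds ,
\]
and, correspondingly, decompose the pseudoconformal transform as $U(\tb{t},\cdot) = U_{\mathrm{lin}}(\tb{t},\cdot) + U_{\mathrm{nl}}(\tb{t},\cdot)$, where $U_{\mathrm{lin}}$ and $U_{\mathrm{nl}}$ are the pseudoconformal transforms of $\mathcal{P}_0 f$ and of $w$ respectively; the two pieces are handled separately.

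For $U_{\mathrm{lin}}$, I would carry out the $\zeta'$-integral in the definition \eqref{eq: Poisson definition} of $\mathcal{P}_0 f$ explicitly: completing the square in the phase, pulling out $e^{-it|\zeta|^2}$ and the prefactor $(4\pi i t)^{n/2}$, the surviving Gaussian integral cancels all constants and identifies $U_{\mathrm{lin}}(\tb{t},\cdot)$ with the free Schr\"odinger flow $e^{i\tb{t}\Delta_\zeta}f$ in the pseudoconformal variable $\zeta$, evaluated at pseudoconformal time $\tb{t} = 1/(4t)$ (for Schwartz $f$, then by density). The key structural point is that $\SN$ is closed under the commutator action of $\Delta_\zeta$: the generators $\Id$, $D_{\zeta_j}$ and $\zeta_jD_{\zeta_i}-\zeta_iD_{\zeta_j}$ commute with $\Delta_\zeta$, while $[\Delta_\zeta,\zeta_j] = -2iD_{\zeta_j} \in \SN$. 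It follows that $e^{-i\sigma\Delta_\zeta}\zeta_j e^{i\sigma\Delta_\zeta} = \zeta_j - 2\sigma D_{\zeta_j}$, with the other generators fixed under this conjugation, so $e^{i\sigma\Delta_\zeta}$ is bounded on each $\modulesp^j_{\SN}$ with operator norm controlled for $\sigma$ in a bounded interval, and a standard density argument gives strong continuity at $\sigma = 0$. Hence $U_{\mathrm{lin}}(\tb{t},\cdot) - f = e^{i\tb{t}\Delta_\zeta}f - f \to 0$ in $\modulesp^k_{\SN} = H^{k,k}_{\oc}$ as $t \to \infty$; moreover, since $k \geq 2$ and $\Delta_\zeta = \sum_j D_{\zeta_j}^2$ maps $\modulesp^k_{\SN}$ into $\modulesp^{k-2}_{\SN}$, the fundamental theorem of calculus gives
\[
\|e^{i\tb{t}\Delta_\zeta}f - f\|_{\modulesp^{k-2}_{\SN}} \leq \tb{t}\, \sup_{\sigma \in [0,\tb{t}]} \|e^{i\sigma\Delta_\zeta}\Delta_\zeta f\|_{\modulesp^{k-2}_{\SN}} \lesssim \tb{t}\, \|f\|_{\modulesp^k_{\SN}} = O(t^{-1}) .
\]

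For $U_{\mathrm{nl}}$, chasing the norm identities of Proposition~\ref{prop: pulling out the oscillatory factor} and \eqref{eq: module-1c norm relationship} gives $\|U_{\mathrm{nl}}(\tb{t},\cdot)\|_{\modulesp^j_{\SN}} \approx \|w(t,\cdot)\|_{\modulesp^j_{\SM_t}}$, with an absolute constant, for any order $j$; and by \eqref{eq:Duhamelbound} (and the isometry property of $e^{-i(t-s)\Delta_0}$ between $\modulesp^k_{\SM_t}$ and $\modulesp^k_{\SM_s}$) one has $\|w(t,\cdot)\|_{\modulesp^k_{\SM_t}} \leq \int_t^\infty \|(Vu \mp |u|^{p-1}u)(s,\cdot)\|_{\modulesp^k_{\SM_s}}\, ds$. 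The pointwise bounds already obtained in the proof of Proposition~\ref{prop: contraction property}, namely $\|(Vu \mp |u|^{p-1}u)(s,\cdot)\|_{\modulesp^k_{\SM_s}} \lesssim s^{-n(p-1)/2} + s^{-1-n/2}$, are integrable near $+\infty$ under \eqref{eq:np}, so the tail tends to $0$ and $\|U_{\mathrm{nl}}(\tb{t},\cdot)\|_{\modulesp^k_{\SN}} \to 0$; together with the linear piece this proves \eqref{eq: convergence of final state}. Integrating the same bound yields
\[
\|U_{\mathrm{nl}}(\tb{t},\cdot)\|_{\modulesp^{k-2}_{\SN}} \leq \|U_{\mathrm{nl}}(\tb{t},\cdot)\|_{\modulesp^{k}_{\SN}} \lesssim t^{1-n(p-1)/2} + t^{-n/2} ,
\]
and for the admissible pairs $(n,p)$ the first term is $O(t^{-1})$ while the second is $O(t^{-1})$ for $n=2,3$ and $O(t^{-1/2})$ for $n=1$; adding the $O(t^{-1})$ contribution of the linear piece gives \eqref{eq: convergence of final state, with rate} with $\gamma$ as in \eqref{eq:rateofconv}.

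Finally, \eqref{eq: convergence of final state, epsilon} I would obtain by complex interpolation in the weighted $1$-cusp scale, using $H^{k-\epsilon,k-\epsilon}_{\oc} = [H^{k,k}_{\oc}, H^{k-2,k-2}_{\oc}]_{\epsilon/2}$ for $\epsilon \in [0,2]$ together with the uniform bound $\|U(\tb{t},\cdot) - f\|_{H^{k,k}_{\oc}} \lesssim 1$ (from $u \in X$) and the $O(t^{-\gamma})$ rate in $H^{k-2,k-2}_{\oc}$. I expect the main obstacle to be the $U_{\mathrm{lin}}$ step: precisely identifying its pseudoconformal transform with the free Schr\"odinger flow in $\zeta$ (a careful constant- and sign-chasing computation), and then establishing the uniform-in-$\sigma$ boundedness and strong continuity of $e^{i\sigma\Delta_\zeta}$ on $\modulesp^j_{\SN}$, which hinges on $\SN$ being closed under the commutator action of $\Delta_\zeta$; the remaining estimates are bookkeeping with the norm equivalences and re-use of the bounds from the contraction argument.
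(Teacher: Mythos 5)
Your proposal is correct and follows essentially the same route as the paper: contraction mapping for existence, the splitting $u = \mathcal{P}_0 f + w$, identification of the pseudoconformal transform of the Poisson term with the free flow $e^{i\tb{t}\Delta_\zeta}f$ together with strong continuity of that flow on $\modulesp^k_{\SN}$ coming from the module symmetries, reuse of the contraction-argument bounds for the Duhamel tail, and a fundamental-theorem-of-calculus argument for the $O(\tb{t})$ rate of $e^{i\tb{t}\Delta_\zeta}f - f$ in $\modulesp^{k-2}_{\SN}$. The only (harmless) deviation is in the last step: you interpolate the element $U(\tb{t},\cdot)-f$ directly between $H^{k,k}_{\oc}$ and $H^{k-2,k-2}_{\oc}$, whereas the paper interpolates the operator family $\mathrm{Op}(x^{z-2}\la(\xi_{\oc},\eta_{\oc})\ra^{2-z})\circ(e^{i\tb{t}\Delta}-\Id)$ between $H^{2,2}_{\oc}\to L^2$ and $H^{2,2}_{\oc}\to H^{2,2}_{\oc}$; both rest on the same identification of intermediate $1$-cusp spaces and yield the identical bound.
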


\begin{proof}
The existence and uniqueness of a solution to (\ref{eq: NLS, integral version}) satisfying \eqref{eq:u in module space}  are a direct consequence of Proposition~\ref{prop: contraction property} and the contraction mapping theorem.  

To prove the convergence statement, we consider the equality (\ref{eq: NLS, integral version}). 
Estimate \eqref{eq:Phi estimate} shows that the second term of \eqref{eq: NLS, integral version}, integrated in time from $t$ to $\infty$, is $O(t^{1-\frac{n(p-1)}{2}} + t^{-\frac{n}{2}})$ in $\modulesp^k_{\SM_t}$, which in terms of $U$ shows that the contribution of this term to $U$ is $O(t^{1-\frac{n(p-1)}{2}} + t^{-\frac{n}{2}})$ in $\modulesp^k_{\SN}$. This is a convergence rate $O(t^{-1})$ in dimensions $n=2,3$ and $O(t^{-1/2})$ in dimension 1, that is, a rate $O(t^{-\gamma})$ where $\gamma$ is as in \eqref{eq:rateofconv}. 

So now consider the Poisson term in \eqref{eq: NLS, integral version}. We change variables from $z$ to to $\zeta = z/(2t)$ and from $t$ to $\tb{t} = 1/(4t)$ and write 
\begin{multline}
(4\pi i t)^{n/2} e^{-it |\zeta|^2} \SP_0 f(t, z)  = (\pi i /\tb{t})^{n/2} e^{-i |\zeta|^2/(4\tb{t})} (2\pi)^{-n}  \int e^{-i|\zeta'|^2/(4\tb{t})} e^{i\zeta \cdot \zeta'/(2 \tb{t})} f(\zeta') \, d\zeta'  \\
= c (4\pi i \tb{t})^{-n/2} \int e^{i|\zeta - \zeta'|^2/(4\tb{t})} f(\zeta') \, d\zeta'.
\end{multline}
This merely expresses the well-known property that the pseudoconformal transformation of a solution to the free Schr\"odinger equation is itself a solutions to the free Schr\"odinger equation, up to some normalization factors \cite[Exercise 2.28]{tao2006nonlinear}. That is, 
$$
U(\tau, \cdot) = e^{i\tau \Delta} f.
$$
The strong continuity of the Schr\"odinger semigroup on $L^2$ then shows that $U(\tau, \cdot) \to f$ in $L^2$. Moreover,  the symmetries of the Schr\"odinger equation, now in $(\tau, \zeta)$-coordinates, as in Lemma~\ref{lem:symm} imply strong continuity of the Schr\"odinger semigroup on $\modulesp^k_{\SN}(\R^n_{\zeta})$. 
Thus, the convergence also happens in $\modulesp^k_{\SN}(\R^n_{\zeta})$.

Next we show (\ref{eq: convergence of final state 2}),(\ref{eq: convergence of final state, epsilon}). 
We have already seen that the second, time-integrated term of \eqref{eq: NLS, integral version} already has decay rate $O(t^{-\gamma})$ even in the stronger norm $\modulesp^k_{\SM_t}$, so we only need to treat the Poisson term. Clearly it is sufficient  to show 
\begin{align*}  
||(4\pi i t)^{n/2}e^{-i|z|^2/4t}\mathcal{P}_0f(t,2t\zeta)-f(\zeta)||_{H^{k-\epsilon,k-\epsilon}_{\oc}(\R_\zeta^n)} \leq Ct^{-\epsilon/2} ||f||_{H^{k,k}_{\oc}(\R_\zeta^n)}, \quad 0 \leq \epsilon \leq 2, 
\end{align*}
when $||f||_{H^{k,k}_{\oc}(\R_\zeta^n)}$ is finite. And then this is reduced to the case $k=2$ by the intertwining properties of the module $\modulesp^k_{\SM_t}$ with Schr\"odinger solutions. 
Again using the pseudoconformal transformation as above, this is equivalent to
\begin{align} \label{eq: estimate U, epsilon order}
||U(\tb{t})f -f||_{H^{2-\epsilon,2-\epsilon}_{\oc}(\R^n_{\zeta'})} \leq C\tb{t}^{\epsilon/2} ||f||_{H^{k,k}_{\oc}(\R_\zeta^n)}.
\end{align}
For $\epsilon = 2$, this follows from the fact that $U(\tb{t})-f=(e^{i\tb{t}\Delta}-\Id)f$, and
\begin{equation}\begin{aligned}
\frac{1}{\tb{t}}(e^{i\tb{t}\Delta}-\Id) = & \frac{1}{\tb{t}} \int_0^\tb{t} \frac{d}{ds}e^{is\Delta}ds
\\ = & i \Big( \frac{1}{\tb{t}} \int_0^t e^{is\Delta}ds \Big) \Delta.
\end{aligned}\label{eq:semigroup}\end{equation}
Since $e^{is\Delta}$ is unitary on $L^2$, and $\Delta \in \Psi^{2,2}_{\oc}(\overline{\R^n})$ is bounded  $H^{2,2}_{\oc}(\R^n) \to L^2$ (see \eqref{eq:Delta1c}), we see from  \eqref{eq:semigroup} that 
\begin{align}
||e^{i\tb{t}\Delta}-\Id||_{H^{2,2}_{\oc}(\R^n) \rightarrow L^2} \leq C \tb{t}.
\end{align}
And since 
\begin{align}
||e^{i\tb{t}\Delta}-\Id||_{H^{2,2}_{\oc}(\R^n) \rightarrow H^{2,2}_{\oc}(\R^n)} \leq C, \quad 0 \leq \tb{t} \leq S^{-1}, 
\end{align}
applying the complex interpolation method to the following family of operators $H^{2,2}_{\oc}(\R^n) \to L^2(\R^n)$
$$
\mathrm{Op}(x^{z-2} \la (\xi_{\oc},\eta_{\oc})\ra^{2-z})  \circ (e^{i\tb{t}\Delta}-\Id) 
$$ 
for each fixed $\tb{t}$, where $\mathrm{Op}(\cdot)$ is the quantization defined in \eqref{eq: 1-cusp quantization definition}, with $z$ ranging over the band in the complex plane with real part in $[0,2]$, 
we have, since $x^{z-2}\mathrm{Op}(\la (\xi_{\oc},\eta_{\oc}) \ra^{2-z}) \in \Psi^{2-z, 2-z}_{\oc}(\R^n)$ is elliptic for $z \in [0, 2]$ real, 
\begin{align}
||e^{i\tb{t}\Delta}-\Id||_{H^{2,2}_{\oc}(\R^n) \rightarrow H^{2-z,2-z}_{\oc}(\R^n)} \leq C \tb{t}^{z/2},
\end{align}
for $0 \leq z \leq 2$, which proves (\ref{eq: estimate U, epsilon order}).
\end{proof}


\section{Global properties} \label{sec: global properties}
In this section, we extend the solution we obtained in Section \ref{sec: contraction mapping} from a neighbourhood of infinity in time to the whole real line.
We assume throughout this section that we are working with the defocusing equation, that is, the minus sign in \eqref{eq:NLS}. 

\subsection{A priori estimates} 
We prove certain uniform-in-time estimates on the solution that hold for as long as the solution exists. We first record the elementary and well-known observation that:

\begin{lmm}\label{lem:masscons}
Assume that $u$ satisfies \eqref{eq:NLS}, with $V \in \ang{t}^{-1} L^\infty_t \modulesp^2_{\SM_{t, 0}}$ real-valued. Then the $L^2$ norm of $u(t, \cdot)$ is constant in time. 
\end{lmm}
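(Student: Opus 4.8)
The plan is to establish conservation of the $L^2$ norm by the standard energy-type computation, paying attention only to the point that might require justification, namely that $u(t,\cdot) \in L^2$ with enough regularity to differentiate $\|u(t,\cdot)\|_{L^2}^2$ in time. First I would note that since $u \in C(\R_t; \modulesp^k_{\SM_t}) \cap C^{0,1}(\R_t; \modulesp^{k-2}_{\SM_t})$ (part of the conclusion of Theorem~\ref{thm:main1}, or directly from the integral equation \eqref{eq: NLS, integral version} combined with Lemma~\ref{lem:symm}), the function $t \mapsto u(t,\cdot)$ is Lipschitz with values in $L^2(\R^n)$, and $D_t u = -(\Delta_{g(t)} + V) u + |u|^{p-1} u$ holds in $\modulesp^{k-2}_{\SM_t} \subset L^2$ for a.e.\ $t$. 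Since $k \geq 2$, all terms on the right-hand side lie in $L^2$ for each $t$: $\Delta_{g(t)} u \in \modulesp^{k-2}_{\SM_t}$, $Vu \in \modulesp^{k-2}_{\SM_{t,0}} \subset L^2$ using the multiplication result Proposition~\ref{prop: module regularity, multiplication rule} (or simply that $V \in \ang{t}^{-1} L^\infty_t \modulesp^2_{\SM_{t,0}}$ and $u \in \modulesp^k_{\SM_t}$ with $k \geq 2$), and $|u|^{p-1} u \in \modulesp^k_{\SM_t} \subset L^2$ by Proposition~\ref{prop: N[u] bound}.

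Next I would compute, for a.e.\ $t$,
\begin{equation}
\frac{d}{dt} \|u(t,\cdot)\|_{L^2}^2 = 2 \Re \langle \partial_t u, u \rangle = 2 \Re \langle i D_t u, u \rangle \cdot (-1) = 2 \Im \langle D_t u, u \rangle,
\end{equation}
using $\partial_t = i D_t$. Substituting the equation, $D_t u = -(\Delta + V) u - (-|u|^{p-1} u)$ in the defocusing case, we get
\begin{equation}
\frac{d}{dt} \|u(t,\cdot)\|_{L^2}^2 = -2 \Im \langle \Delta u, u \rangle - 2 \Im \langle V u, u \rangle + 2 \Im \langle |u|^{p-1} u, u \rangle.
\end{equation}
Each term vanishes: $\langle \Delta u, u \rangle = \|\nabla u\|_{L^2}^2$ is real (valid since $u(t,\cdot) \in \modulesp^k_{\SM_t} \subset H^2$ makes the integration by parts legitimate); $\langle V u, u \rangle = \int V |u|^2$ is real because $V$ is real-valued; and $\langle |u|^{p-1} u, u \rangle = \int |u|^{p+1}$ is real. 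For the variable-metric case \eqref{eq:NLSvarg} one replaces $\Delta$ by $\Delta_{g(t)}$, which is self-adjoint with respect to the Riemannian density $dV_{g(t)}$ but not a priori with respect to $dz$; however since $g(t) = \delta$ outside a compact set and the computation is only needed for Lemma~\ref{lem:masscons} as stated (which concerns \eqref{eq:NLS}, i.e.\ $g = \delta$), I would just remark that the analogous statement for \eqref{eq:NLSvarg} requires using $dV_{g(t)}$ or the symmetrized operator, deferring that to Section~\ref{sec:metric}. Hence $\frac{d}{dt}\|u(t,\cdot)\|_{L^2}^2 = 0$ for a.e.\ $t$, and since $t \mapsto \|u(t,\cdot)\|_{L^2}^2$ is absolutely continuous (being Lipschitz), it is constant.

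The main obstacle, such as it is, is purely one of rigor rather than substance: justifying that the formal energy computation is legitimate, i.e.\ that $t \mapsto \|u(t,\cdot)\|_{L^2}^2$ is genuinely differentiable (a.e., with the stated derivative) and that the integration by parts $\Im\langle \Delta u, u\rangle = 0$ is valid. Both follow from the regularity $u \in C^{0,1}(\R_t; \modulesp^{k-2}_{\SM_t})$ together with $u(t,\cdot) \in H^2$ for each $t$ (from $k \geq 2$), so there is no real difficulty — the lemma is, as the authors say, elementary and well-known, and the proof is a one-paragraph verification. If one wanted to avoid invoking \eqref{eq:strongsoln} (whose proof may come later), one could instead work directly from Duhamel's formula \eqref{eq:Duhamel}/\eqref{eq: NLS, integral version}, using that $e^{-it\Delta_0}$ is unitary on $L^2$ and a density/approximation argument with Schwartz data, exactly as in the proof of Lemma~\ref{lem:symm}(i); this is the most self-contained route and is what I would actually write.
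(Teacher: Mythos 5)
Your proof is correct and is exactly the standard computation the paper has in mind: the authors state Lemma~\ref{lem:masscons} without proof as ``elementary and well-known,'' and your argument (differentiate $\|u(t,\cdot)\|_{L^2}^2$, use the equation, and observe that $\langle \Delta u,u\rangle$, $\int V|u|^2$ and $\int |u|^{p+1}$ are all real), together with your closing remark that the rigorous justification is best run through Duhamel's formula and a density argument as in Lemma~\ref{lem:symm}, is precisely the intended one. Two harmless sign slips: since $D_t=-i\partial_t$ we have $\partial_t u = iD_t u$ with no extra factor of $-1$, and in the defocusing case $D_t u = -(\Delta+V)u - |u|^{p-1}u$; neither affects the conclusion because every resulting pairing is real, so its imaginary part vanishes regardless of sign.
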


The second concerns the energy, which for us is the quantity
\begin{equation}\label{eq:energy}
E(t) = \frac1{2} \int |\nabla u(t, z)|^2 \, dz + \frac1{p+1} \int |u(t, z)^{p+1} \, dz. 
\end{equation}
Notice that we do \emph{not} include a potential term $1/2 \int V |u|^2 \, dz$ in the energy. When the potential is time independent, including this term leads to a conserved quantity. However, in our case, the potential depends on time and the energy is not conserved, with or without the potential term. Another more crucial reason is that we have made no assumption on differentiability of the potential $V(t, z)$ in $t$. Thus, we would not be able to differentiate the energy with respect to time, if such a term were included.   

Although the energy $E(t)$ is not conserved, under our assumptions on $V$ it is uniformly bounded in time, as long as the solution exists:

\begin{lmm}\label{lem:energybdd}
Assume that $u$ satisfies \eqref{eq:NLS}, with $V \in \ang{t}^{-1} L^\infty_t \modulesp^2_{\SM_{t, 0}}$ real-valued. 
 Choose a positive $\SE > 0$ and  let $\tilde E(t) = \max(\SE, E(t))$. Then, $\tilde E(t)$ satisfies 
\begin{equation}
\frac{d}{dt} \tilde E(t) \leq b(t) \tilde E(t),
\end{equation}
where $b(t) = \| V(t, \cdot) \|_{\modulesp^2_{\SM_{t,0}}} \in L^1_t$. Consequently, $\tilde E(t)$, and therefore also $E(t)$, are bounded uniformly in time.
\end{lmm}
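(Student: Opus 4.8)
The plan is to establish the differential inequality for $\tilde E(t)$ directly by differentiating the energy $E(t)$ along the flow, estimating the resulting terms, and then concluding by Gr\"onwall's inequality. The key point is that the only obstruction to conservation comes from the potential term, and the potential's contribution can be controlled by the energy itself (up to an $L^1_t$ factor), which is exactly the structure needed for Gr\"onwall.

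\textbf{Step 1: Differentiate the energy.} First I would compute $\frac{d}{dt} E(t)$ using the equation \eqref{eq:NLS} (defocusing sign). A standard computation — multiply the equation by $\partial_t \ubar$, take real parts, integrate in $z$ — gives, for sufficiently regular solutions,
\begin{equation*}
\frac{d}{dt} E(t) = \Re \int V(t,z)\, \partial_t u(t,z)\, \ubar(t,z)\, dz = \frac12 \int V(t,z)\, \partial_t |u(t,z)|^2 \, dz.
\end{equation*}
More precisely one finds $\frac{d}{dt}E(t) = -\Im \int (\partial_t V)\dots$ type expressions are \emph{not} available since $V$ is not differentiable in $t$; instead the clean route is to write $\frac{d}{dt}\big(E(t) + \frac12\int V|u|^2\big) = \frac12 \int (\partial_t V)|u|^2$, which again needs $\partial_t V$. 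So the correct manipulation is to keep $E(t)$ without the potential term and obtain $\frac{d}{dt}E(t) = \Re\langle V u, \partial_t u\rangle$; substituting $\partial_t u = i(\Delta u + Vu - |u|^{p-1}u)$ from the equation and using that $V$ is real, the terms $\Re\, i\langle Vu, Vu\rangle$ and $\Re\, i \langle Vu, |u|^{p-1}u\rangle$ vanish, leaving $\frac{d}{dt}E(t) = \Re\, i \langle Vu, \Delta u\rangle = -\Im\langle V u, \Delta u\rangle$. Integrating by parts, $-\Im\langle Vu,\Delta u\rangle = \Im \int \nabla(Vu)\cdot\overline{\nabla u} = \Im\int (\nabla V) u \cdot \overline{\nabla u}$ (the $V|\nabla u|^2$ term is real), so
\begin{equation*}
\Big|\frac{d}{dt} E(t)\Big| \leq \int |\nabla V(t,z)|\,|u(t,z)|\,|\nabla u(t,z)|\, dz.
\end{equation*}

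\textbf{Step 2: Estimate the right-hand side by the energy.} Next I would bound this integral. By Cauchy--Schwarz it is at most $\|\nabla V(t,\cdot)\, u(t,\cdot)\|_{L^2}\, \|\nabla u(t,\cdot)\|_{L^2}$, and $\|\nabla u(t,\cdot)\|_{L^2}^2 \leq 2E(t) \leq 2\tilde E(t)$. For the first factor, I would use the multiplication property of module regularity spaces: since $V(t,\cdot) \in \modulesp^2_{\SM_{t,0}}$ and $k=2 > n/2$ for $n\le 3$, $\modulesp^2_{\SM_{t,0}}$ is an algebra (Proposition~\ref{prop: module regularity, multiplication rule}) and embeds in $L^\infty$; moreover $\nabla V$, being $D_{z}$ applied to $V$, together with a weight, is controlled — one needs the generator $2tD_{z_j} - z_j$, equivalently (after removing the oscillatory factor) $2tD_{z_j}$, so $\|\nabla V(t,\cdot)\,\phi\|_{L^2} \lesssim t^{-1}\|V(t,\cdot)\|_{\modulesp^2_{\SM_{t,0}}}\|\phi\|_{L^\infty}$ for any $\phi$, hence with $\phi = u$ and Lemma~\ref{lem:decay}/Sobolev embedding $\|\nabla V\, u\|_{L^2} \lesssim t^{-1}\|V\|_{\modulesp^2_{\SM_{t,0}}}\|u\|_{L^2}$, and $\|u\|_{L^2}$ is constant by Lemma~\ref{lem:masscons} (mass conservation), hence bounded by $\|u\|_{L^2}\le C\tilde E(t)^{1/2}$ if $\SE$ is chosen $\geq$ a fixed constant, or simply absorbed. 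Collecting, $|\frac{d}{dt}E(t)| \leq C\, \langle t\rangle^{-1}\|V(t,\cdot)\|_{\modulesp^2_{\SM_{t,0}}}\, \tilde E(t)$, and since $V \in \langle t\rangle^{-1} L^\infty_t \modulesp^2_{\SM_{t,0}}$ the function $b(t) := C\langle t\rangle^{-1}\|V(t,\cdot)\|_{\modulesp^2_{\SM_{t,0}}}$ lies in $L^1_t$. On the set where $E(t) \geq \SE$ we have $\tilde E = E$ so $\frac{d}{dt}\tilde E(t) \leq b(t)\tilde E(t)$; on the (open) set where $E(t) < \SE$, $\tilde E$ is locally constant so the inequality is trivial; at the boundary one argues with one-sided derivatives or works with the absolutely continuous representative. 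This gives the claimed differential inequality.

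\textbf{Step 3: Conclude via Gr\"onwall.} Finally, Gr\"onwall's inequality gives $\tilde E(t) \leq \tilde E(S)\exp\big(\int_S^t b(s)\,ds\big) \leq \tilde E(S)\exp\big(\|b\|_{L^1_t}\big)$, a bound uniform in $t$ (in either time direction, integrating from the reference time $S$ where the solution is known to exist by Proposition~\ref{prop: exist solution for large time}), and since $0 \le E(t) \le \tilde E(t)$ the energy is uniformly bounded as well. \textbf{The main obstacle} is Step 1/Step 2: because $V$ is not assumed differentiable in $t$, one cannot use the naive "energy with potential term is almost conserved" computation, and must instead carry out the integration-by-parts that produces the $\nabla V$ term and then verify that $\nabla V \cdot u$ is controlled in $L^2$ with the right $\langle t\rangle^{-1}$ decay using the module structure and mass conservation — this is where the precise hypothesis $V \in \langle t\rangle^{-1}L^\infty_t\modulesp^2_{\SM_{t,0}}$ (rather than a time-independent potential) is essential. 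One should also double-check the justification of the differentiation under the integral sign for the strong solution regularity class \eqref{eq:strongsoln}, which can be handled by a density/approximation argument as in the proof of Lemma~\ref{lem:symm}.
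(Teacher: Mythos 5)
Your overall strategy coincides with the paper's: differentiate $E(t)$, obtain $\frac{d}{dt}E(t)=\Im\int(\nabla V)\,u\cdot\overline{\nabla u}\,dz$ (your Step 1 matches the paper's \eqref{eq:Ederiv}), and close with a Gr\"onwall argument on $\tilde E$. Step 1 and Step 3 are fine.

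The genuine gap is in Step 2, where you estimate the trilinear term. You apply Cauchy--Schwarz to get $\|\nabla V\cdot u\|_{L^2}\|\nabla u\|_{L^2}$ and then assert $\|\nabla V\,\phi\|_{L^2}\lesssim t^{-1}\|V\|_{\modulesp^2_{\SM_{t,0}}}\|\phi\|_{L^\infty}$ with $\phi=u$, finally replacing $\|u\|_{L^\infty}$ by $\|u\|_{L^2}$ via ``Lemma~\ref{lem:decay}/Sobolev embedding.'' Neither replacement is available. The a priori information at this stage is only mass conservation and $E(t)<\infty$, i.e.\ $u\in H^1\cap L^{p+1}$; this does not control $\|u\|_{L^\infty}$ for $n=2,3$, and invoking module regularity of $u$ here would be circular, since the uniform module bounds are precisely what this section is building toward (Corollary~\ref{cor: global Wk bound}). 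The alternative split $\|\nabla V\|_{L^\infty}\|u\|_{L^2}$ also fails: one module derivative of $V$ leaves only $\nabla V\in\ang{t}^{-1}L^\infty_t\modulesp^1_{\SM_{t,0}}$, and $\modulesp^1$ does not embed in $L^\infty$ for $n\geq 2$ (one needs $k>n/2$). The paper resolves this with a three-factor H\"older estimate
\begin{equation*}
\Big|\frac{d}{dt}E(t)\Big|\leq \|\nabla V\|_{L^r}\,\|u\|_{L^{p+1}}\,\|\nabla u\|_{L^2},\qquad r=\frac{2(p+1)}{p-1}\in(2,4],
\end{equation*}
so that $\|u\|_{L^{p+1}}\leq CE(t)^{1/(p+1)}$ and $\|\nabla u\|_{L^2}\leq CE(t)^{1/2}$ are both controlled by the energy, while $\|\nabla V(t,\cdot)\|_{L^r}\lesssim\ang{t}^{-1-\epsilon}$ follows from Lemma~\ref{lem:decay} applied to $\nabla V\in\ang{t}^{-1}L^\infty_t\modulesp^1_{\SM_{t,0}}$ (the decay lemma is applied to $V$, not to $u$). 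With $E\geq\SE$ one then bounds $E^{1/2+1/(p+1)}\leq C\tilde E$ and Gr\"onwall applies. You should replace your Cauchy--Schwarz step by this exponent choice; the rest of your argument then goes through.
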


\begin{proof} Differentiating $\tilde E(t)$ in time, we obtain either $0$, if $E(t) < \SE$, or we follow the usual calculation and arrive at 
\begin{equation}\label{eq:Ederiv}
\frac{d}{dt} E(t) = \sum_{j=1}^n \Im \int D_j V(t, z) u(t, z) \overline{D_j u(t, z)} \, dz,
\end{equation}
so
\begin{equation}
\Big| \frac{d}{dt} \tilde E(t) \Big| \leq   \| \nabla V(t, \cdot) \|_{L^r} \| u(t, \cdot) \|_{L^{p+1}} \| \nabla u \|_{L^2} , \quad r = \frac{2(p+1)}{p-1}. 
\end{equation}
We can bound $\| u(t, \cdot) \|_{L^{p+1}} \leq C E(t)^{1/(p+1)}$ and $\| \nabla u \|_{L^2} \leq C E(t)^{1/2}$. Moreover, the exponent $r = 2(p+1)/(p-1)$ for the norm of $\nabla V$ is in the range $(2, 4]$. Since $V \in \ang{t}^{-1} L^\infty_t \modulesp^2_{\SM_{t, 0}}$, we have $\nabla V \in \ang{t}^{-1} L^\infty_t \modulesp^1_{\SM_{t, 0}}$. Applying Lemma~\ref{lem:decay}, we find that $\nabla V \in \ang{t}^{-1 - \epsilon} L^\infty_t L^r_z$ for some $\epsilon > 0$. Therefore, we have 
we can bound 
\begin{equation}
\Big| \frac{d}{dt} \tilde E(t) \Big| \leq C \ang{t}^{-1 - \epsilon}E^{1/2 + 1/(p+1)} \leq C \ang{t}^{-1 - \epsilon}  \tilde E(t) . 
\end{equation}
where we use the fact that $E(t) \geq \SE$ to replace $E^{1/2 + 1/(p+1)}$ with $E$ up to a constant. Thus the time derivative of $\log \tilde E(t)$ is integrable in time, so $\tilde E(t)$, and hence $E(t)$ itself, is bounded uniformly in time. 
\end{proof}

\begin{coro}\label{cor:globalH1}
There is a global-in-time solution $u(t, z) \in L^\infty_t H^1(\R^n)$ of \eqref{eq:NLS} extending the solution found in Proposition~\ref{prop: exist solution for large time}. 
\end{coro}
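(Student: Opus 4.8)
The plan is to combine the local existence theory in $H^1$ (which is standard for NLS with $p$ and $n$ in the range \eqref{eq:np}, since these are energy-subcritical; see e.g.\ \cite{cazenave2003semilinear}) with the uniform a priori bound on the energy $E(t)$ established in Lemma~\ref{lem:energybdd}. Concretely, Proposition~\ref{prop: exist solution for large time} already furnishes a solution $u$ on $[S, \infty)$ lying in $L^\infty_t \modulesp^k_{\SM_t}$ with $k \geq 2$; in particular, for each fixed $t_0 \geq S$, the slice $u(t_0, \cdot)$ lies in $\modulesp^2_{\SM_{t_0}} \subset H^2(\R^n) \subset H^1(\R^n)$, so we have valid $H^1$ Cauchy data at time $t_0 = S$. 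We then run the $H^1$ local well-posedness theory \emph{backwards} in time from $t = S$: the equation is $(D_t + \Delta + V)u = - |u|^{p-1} u$ with $V \in \ang{t}^{-1} L^\infty_t \modulesp^2_{\SM_{t,0}}$, and by Lemma~\ref{lem:decay} the potential $V$ lies in $\ang{t}^{-1-\epsilon} L^\infty_t L^{r}_z$ for a suitable $r$, which is an admissible class for the $H^1$ Strichartz-based contraction argument. This gives a solution on some maximal interval $(T_{\min}, S]$.

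The key step is to show $T_{\min} = -\infty$, i.e.\ that the solution does not blow up in $H^1$ in finite time. This is exactly where Lemma~\ref{lem:energybdd} enters: the blow-up alternative for the $H^1$ local theory states that if $T_{\min} > -\infty$ then $\| u(t, \cdot) \|_{H^1} \to \infty$ as $t \downarrow T_{\min}$. But mass conservation (Lemma~\ref{lem:masscons}) controls $\| u(t, \cdot) \|_{L^2}$, and the uniform-in-time bound on $E(t)$ from Lemma~\ref{lem:energybdd} controls $\| \nabla u(t, \cdot) \|_{L^2}^2 \leq 2 E(t) \leq C$ (the $L^{p+1}$ term in \eqref{eq:energy} being nonnegative in the defocusing case, which is why we restrict to the $-$ sign). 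Together these give $\| u(t, \cdot) \|_{H^1} \leq C$ uniformly on $(T_{\min}, S]$, contradicting the blow-up alternative unless $T_{\min} = -\infty$. Hence the solution extends to all $t \leq S$, and combined with the solution on $[S, \infty)$ we obtain a solution on all of $\R_t$ with $u \in L^\infty_t H^1(\R^n)$.

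The one point requiring a little care — and the main (mild) obstacle — is the matching of the two solutions at $t = S$ and the consistency of the equation as we pass from the region $t \geq S$, where $u$ was constructed in the module spaces, to the region $t < S$, where we use only the $H^1$ theory: we need to know that the module-regularity solution of the integral equation \eqref{eq: NLS, integral version} is genuinely a strong $H^1$ solution of \eqref{eq:NLS} near $t = S$, so that it agrees with the backward $H^1$ evolution by $H^1$-uniqueness. This is supplied by the strong-solution statement \eqref{eq:strongsoln} (equivalently, by Duhamel's formula \eqref{eq:Duhamel} applied with $H^1$ data and the already-known $L^{p-1}_t L^\infty_z$ bound that makes the Duhamel iteration converge), together with the standard uniqueness in $C_t H^1 \cap$ (Strichartz space). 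Everything else is a routine invocation of the finite-time $H^1$ Cauchy theory; the genuinely new input is entirely contained in Lemmas~\ref{lem:masscons} and \ref{lem:energybdd}.
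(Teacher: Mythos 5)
Your proposal is correct and follows essentially the same route as the paper: $H^1$ local well-posedness combined with the uniform mass and energy bounds of Lemmas~\ref{lem:masscons} and \ref{lem:energybdd} to rule out finite-time blow-up and extend globally (the paper phrases this as iterating local existence on intervals of uniformly bounded length rather than via the blow-up alternative, but these are the same continuation argument). Your additional remarks on the defocusing sign and on matching the module-regularity solution with the backward $H^1$ evolution at $t=S$ are sound and merely make explicit points the paper leaves implicit.
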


\begin{proof} This follows from local well-posedness of \eqref{eq:NLS} in $H^1(\R^n)$ \cite[Section 3.3]{cazenave2003semilinear}, which means that starting from an initial condition $u(T)$ in $H^1$, a solution can be found on a finite time interval $[T-c, T+c]$ where $c$ depends only on $\| u(T) \|_{H^1}$. Uniform boundedness of the energy means that this solution can be extended repeatedly with time-intervals that are uniformly bounded from below in length, and thus the solution can be extended to all $t \in \R$.
\end{proof}

The previous Corollary shows that there is a global solution, but it only provides $H^1$ regularity in space, not $\modulesp^k_{\SM_t}$ regularity as we wish to show. 
To proceed, we next assume, following the approach in \cite[Section 7]{cazenave2003semilinear}, some additional decay of the initial condition, in addition to being in $H^1$. 
The next estimate is a pointwise (in time) decay estimate on $\| u(t, \cdot) \|$, assuming that it solves \eqref{eq:NLS} with initial data $\phi \in \ang{z}^{-1} L^2(\R^n) \cap H^1(\R^n)$ given at some time $T$. Notice that this is a weaker condition than being in the module regularity space $\modulesp^1_{\SM_T}$. 

\begin{lmm}\label{lem:NLSLrdecay}
Assume that the function $u(t, z)$ solves \eqref{eq:NLS} with $n$ and $p$ satisfying \eqref{eq:np},  with real-valued potential function $V(t, z) \in \ang{t}^{-1} L^\infty_t \modulesp^2_{\SM_{t, 0}}$ and  initial data $\phi \in \ang{z}^{-1} L^2(\R^n) \cap H^1(\R^n)$ given at some time $T$. Then, for $2 \leq r \leq \infty$ in dimension $1$, $2 \leq r < \infty$ in dimension $2$ and $2 \leq r \leq 6$ in dimension 3, we have 
\begin{equation}\label{eq:NLSLrdecay}
\| u (t, \cdot) \|_{L^r_z} \leq C \ang{t}^{-n (1/2 + n/8)(1/2 - 1/r)}.
\end{equation}
\end{lmm}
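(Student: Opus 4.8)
The plan is to run a bootstrap argument on the quantity $M(t) := \sup_{T \leq s \leq t}\big( \langle s \rangle^{a}\| u(s, \cdot) \|_{L^{p+1}_z}\big)$ (for the appropriate exponent $a$ determined below) using the integral equation and the dispersive/Strichartz machinery, exactly as in \cite[Section 7.2]{cazenave2003semilinear}, but now with the potential term present. First I would reduce to the endpoints $r = 2$ and $r = p+1$: the bound at $r = 2$ is just mass conservation (Lemma~\ref{lem:masscons}), and the remaining values of $r$ in the stated range follow by interpolating \eqref{eq:NLSLrdecay} at $r = 2$ with $r=p+1$, together with the Gagliardo--Nirenberg inequality $\| u \|_{L^r} \lesssim \| u \|_{L^{p+1}}^{\theta} \| u \|_{H^1}^{1-\theta}$ and the uniform bound $\| u \|_{H^1} \leq C$ from Lemma~\ref{lem:energybdd}; so the heart of the matter is the $L^{p+1}_z$ decay with rate $\langle t \rangle^{-n(1/2+n/8)(1/2-1/(p+1))}$. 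Note $n(1/2 - 1/(p+1)) = n(p-1)/(2(p+1))$ is the standard free dispersive rate for $L^{p+1}$; the extra factor $(1/2 + n/8)$ built into the exponent is the (possibly sub-optimal, but sufficient) rate we are claiming to propagate, so set $a = n(1/2+n/8)(1/2 - 1/(p+1))$.

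Next I would write Duhamel's formula for $u$ with data $\phi$ at time $T$,
\begin{equation*}
u(t, \cdot) = e^{-i(t-T)\Delta_0}\phi + i\int_T^t e^{-i(t-s)\Delta_0}\big( V(s,\cdot)u(s,\cdot) \mp |u(s,\cdot)|^{p-1}u(s,\cdot)\big)\, ds,
\end{equation*}
and apply the free dispersive estimate $\| e^{-it\Delta_0} g \|_{L^{p+1}_z} \leq C |t|^{-n(1/2-1/(p+1))}\| g \|_{L^{p'+1}_z}$ (with $p' + 1 = (p+1)'$ the dual exponent) to each term. For the linear term, $\| e^{-i(t-T)\Delta_0}\phi \|_{L^{p+1}} \leq C\langle t \rangle^{-n(1/2-1/(p+1))}\| \phi \|_{L^{(p+1)'}}$, and since $\phi \in \langle z \rangle^{-1}L^2 \cap H^1$ one controls $\| \phi \|_{L^{(p+1)'}}$ by H\"older (in the range \eqref{eq:np} the exponent $(p+1)'$ lies between $1$ and $2$, so $\langle z \rangle^{-1}L^2$ embeds into it). For the nonlinear term, $\||u|^{p-1}u\|_{L^{(p+1)'}} = \| u \|_{L^{p+1}}^{p} \leq \langle s \rangle^{-ap} M(t)^p$, and the time integral $\int_T^t |t-s|^{-n(1/2-1/(p+1))}\langle s \rangle^{-ap}\, ds$ converges and is $O(\langle t \rangle^{-a})$ precisely because $p \geq 1 + 4/n$ (which, as flagged in the Introduction right after \eqref{eq:np}, is exactly where this hypothesis is used): that inequality guarantees $ap > 1$ and also $n(1/2 - 1/(p+1)) \leq a \cdot$(something) in the right way for the standard convolution-of-power-weights lemma (e.g.\ \cite[Lemma 7.4.2]{cazenave2003semilinear}) to apply. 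For the potential term, split into $\| e^{-i(t-s)\Delta_0}(Vu)\|_{L^{p+1}}$; one can either dispersively estimate $\| Vu \|_{L^{(p+1)'}} \leq \| V \|_{L^r} \| u \|_{L^{p+1}}$ with $r = (p+1)/(p-1) \cdot \tfrac{p+1}{2}$... more cleanly, use the unitarity of $e^{-i(t-s)\Delta_0}$ on $L^2$ together with the decay of $V$ from Lemma~\ref{lem:decay} — since $V \in \langle s\rangle^{-1}L^\infty_s\modulesp^2_{\SM_{s,0}}$, Lemma~\ref{lem:decay} gives $V \in \langle s \rangle^{-1-\epsilon}L^\infty_s L^\rho_z$ for $\rho$ slightly above $2$ (when $n\le 3$), so pairing with $\| u \|_{H^1} \le C$ via H\"older and Sobolev embedding bounds $\| Vu(s)\|_{L^2} \le C\langle s \rangle^{-1-\epsilon}$, which is more than summable and contributes a harmless $O(\langle t \rangle^{-a})$ term after interpolating the resulting $L^2$ and $L^{p+1}$ control. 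Collecting, one obtains $\langle t \rangle^{a}\| u(t) \|_{L^{p+1}} \leq C_0 + C_1 M(t)^p$ for $t \in [T, \cdot)$ with $C_0, C_1$ independent of $t$; the usual continuity/bootstrap argument (choosing $T$ large, or rather noting $M$ is finite and continuous and $M(T)$ small by the short-time theory) then closes to give $M(t) \leq 2C_0$ for all $t$, which is \eqref{eq:NLSLrdecay} at $r = p+1$.

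The main obstacle I anticipate is the potential term: unlike the pure-power case treated in \cite{cazenave2003semilinear}, here one must verify that the decay $\langle s \rangle^{-1}$ of $V$ in the module space, converted via Lemma~\ref{lem:decay} into $L^\rho_z$ decay with $\rho$ only slightly above $2$, is enough to beat the loss in the dispersive estimate when we try to put $Vu$ through $e^{-i(t-s)\Delta_0}$ into $L^{p+1}_z$ --- the dispersive estimate wants $Vu \in L^{(p+1)'}$, and $(p+1)'$ may be strictly less than the $\rho'$ naturally available, so one has to either interpolate the dispersive estimate down to the $L^2 \to L^2$ endpoint (where $V$'s $L^{2+}$ decay pairs fine against $u \in H^1 \subset L^{2+}$) and combine it with the $L^{p+1}$ bound by real interpolation, or argue that the potential term is lower-order throughout because it carries an extra $\langle s\rangle^{-1-\epsilon}$, hence never dictates the rate. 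Either way the bookkeeping of exponents in dimension $1$ (where the range of admissible $r$ is largest and the margin smallest, e.g.\ $p \geq 5$) is the delicate point; everything else is a routine transcription of the Cazenave--Weissler scheme.
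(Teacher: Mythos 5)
Your proposal takes a genuinely different route from the paper, and unfortunately it has two gaps that I do not see how to repair within your framework. The paper does not use a Duhamel--dispersive bootstrap at all: it follows the pseudoconformal conservation law argument of \cite[Sections 7.2--7.3]{cazenave2003semilinear}, introducing $h(t) = \| (z + 2i(t-T)\nabla) u \|_{L^2}^2 + \tfrac{8(t-T)^2}{p+1}\int |u|^{p+1}$, computing $h'(t)$, using that the term $-4(t-T)\tfrac{4-n(p-1)}{p+1}\int|u|^{p+1}$ has a favourable sign precisely when $p \geq 1+4/n$ (defocusing), and estimating the two extra potential terms via the module decay of $V$ and the uniform energy bound. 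This yields $h(t) = O(\ang{t})$, hence $\|\nabla \tilde u\|_{L^2} \lesssim \ang{t-T}^{-1/2}$, and Gagliardo--Nirenberg applied to $\tilde u$ gives the preliminary rate $\ang{t}^{-(n/2)(1/2-1/r)}$; one iteration of the argument with the improved $L^4$ bound then produces the stated exponent $-n(1/2+n/8)(1/2-1/r)$.

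The first gap in your argument is the closure of the bootstrap. You arrive at $M(t) \leq C_0 + C_1 M(t)^p$ with $C_0$ controlled by the size of $\phi$, which is \emph{not} assumed small: this is a large-data lemma. An inequality of this shape yields an a priori bound on $M$ only when $C_1 C_0^{p-1}$ is small, i.e.\ only in a small-data regime; the continuity argument fails otherwise because the set $\{M \leq 2C_0\}$ need not be open. Choosing $T$ large does not help, since $M(T) = \ang{T}^{a}\|\phi\|_{L^{p+1}}$ grows with $T$. This is exactly why the large-data $L^r$ decay in Cazenave--Weissler is proved through the pseudoconformal energy (whose sign structure replaces smallness) rather than through dispersive iteration. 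The second gap is your reduction to $r = p+1$. Interpolating the $L^{p+1}$ decay against mass conservation covers only $2 \leq r \leq p+1$, and the Gagliardo--Nirenberg route $\|u\|_{L^r} \lesssim \|\nabla u\|_{L^2}^{\alpha}\|u\|_{L^{p+1}}^{1-\alpha}$ with the uniform $H^1$ bound degenerates at the top of the stated range: for $n=3$, $p=3$, $r=6$ one finds $\alpha = 1$, so this yields no decay at all at $r=6$, whereas the lemma claims $\ang{t}^{-7/8}$ there (similarly for $r=\infty$ when $n=1$ and for large $r$ when $n=2$). The point of the paper's method is that it controls $\|\nabla \tilde u\|_{L^2}$ --- the gradient of the \emph{phase-corrected} solution --- which decays, and Gagliardo--Nirenberg applied to $\tilde u$ (not $u$) then reaches every $r$ in the stated range directly. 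Your exponent bookkeeping for the nonlinear convolution integral and your treatment of the potential term are broadly plausible, but without repairing these two points the proof does not go through.
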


\begin{rmk} The rate of decay in the third line of \eqref{eq:NLSLrdecay} is less than the decay $\ang{t}^{-n(1/2 - 1/r)}$ obtained  in \cite[Section 7]{cazenave2003semilinear} in the case with $V=0$. The reason for this is pointed out in the proof below. This estimate is not optimal, but it suffices for our purposes in the proof of Proposition~\ref{prop:Linftybound}. 
\end{rmk}

\begin{proof}
We follow \cite[Sections 7.2 and 7.3]{cazenave2003semilinear} in computing the time derivative of $h(t)$ (with some variations due to the presence of a potential in our setting), obtaining 
\begin{multline}\label{eq:hdefn}
h(t) = 4(t-T)^2 \| \nabla \tilde u(t, \cdot) \|_{L^2}^2 + \frac{8(t-T)^2}{p+1} \int |u|^{p+1} \, dz \\ = \| (z + 2i(t-T) \nabla) u(t, \cdot) \|_{L^2}^2 + \frac{8(t-T)^2}{p+1} \int |u|^{p+1} \, dz.
\end{multline}
Here, $\tilde u = e^{-i|z|^2/4(t-T)} u$, and the idea is that $\tilde u$ is smoother than $u$ at infinity as the principal oscillation has been removed, so that $z$-derivatives should give rise to additional decay $\sim 1/t$. We compute the time derivative of $h(t)$ as in \cite[Section 7.2]{cazenave2003semilinear}. (However, when comparing our identities to Cazenave's book, bear in mind that our energy is given by \eqref{eq:energy} and does not contain the potential term, nor is it constant in time, as it is in Cazenave; moreover, our potential $V$ corresponds to $-V$ in Cazenave.) We obtain 
\begin{multline}
h'(t) = -4(t-T) \frac{4-n(p-1)}{p+1} \int |u|^{p+1} - 4(t-T) \int (z \cdot \nabla V) |u|^2 \\ + 8(t-T)^2 \Im \int (\nabla V) u \cdot \nabla \overline{u}. 
\end{multline}
The last term is an extra term, compared to Cazenave, and corresponds to the nonzero time derivative of $E$ as in \eqref{eq:Ederiv}. 

Using the fundamental theorem of calculus, we have
\begin{multline} 
h(t) = h(T) + \int_T^t h'(t) \, dt = \| z \phi \|_{L^2}^2 + 4 \int_T^t 4(s-T) \Bigg\{ -\frac{(n(p-1)-4)}{p+1}  \int |u|^{p+1} \\ -   (z \cdot \nabla V) |u|^2 + 2(s-T) \Im (\nabla V) u \cdot \nabla \overline{u} \Bigg\} \, ds.
\end{multline}
By our assumptions \eqref{eq:np} on $n$ and $p$, the first term in the integrand has a favourable sign (nonpositive for $t > T$, nonnegative for $t < T$), which by itself would imply that $h(t)$ was uniformly bounded. We estimate the second and third terms in the integrand. The second term we estimate by
$$
(s-T) \| z \cdot \nabla V \|_{L^2} \| u \|_{L^{4}}^2,
$$
and note that $\| z \cdot \nabla V \|_{L^2} = O(\ang{t}^{-1})$ since $z_j D_{z_j} = (z/t)_j (t D_{z_j}) = \zeta_j D_{\zeta_j}$ are module elements of $\SM_{t,0}$, and the module norm of $V$ decays as $\ang{t}^{-1}$ by assumption. Moreover, $\| u \|_{L^4}$ can be interpolated between the conserved $L^2$ norm and the $L^{p+1}$ norm in the energy $E(t)$, which is already known to be uniformly bounded thanks to Lemma~\ref{lem:energybdd}. Thus, the second term in the integrand is uniformly bounded in $t$. The third term is similar: we estimate it by 
$$
(s-T) \| (s-T) \nabla V \|_{L^\infty} \| u \|_{L^2} \| \nabla u \|_{L^2},
$$
and, since $(s-T) \nabla$ is a module element, we know by Lemma~\ref{lem:decay} that $\| (s-T) \nabla V \|_{L^4}$ decays like $\ang{t}^{-1 - n/2}$ for large $t$, while the other factors are uniformly bounded by powers of $E(t)$.

Therefore, the integrand is uniformly bounded in time, which means that the integral, and thus $h(t)$ itself, is bounded by $C \ang{t}$. 
This can be compared to the estimate from Cazenave when $V=0$, where, as noted above, $h(t)$ would be uniformly bounded. Thus, compared to the trivial estimate $h(t) = O(\ang{t}^2)$, we have `half as much' decay, which leads to our decay rates in the second line of \eqref{eq:NLSLrdecay} being half what is proved in Cazenave in the $V=0$ case. 
From \eqref{eq:hdefn} we deduce that 
\begin{equation}\label{eq:tilde u decay 1}
\| \nabla \tilde u(t, \cdot) \|_{L^2} \leq C \ang{t-T}^{-1/2}.
\end{equation}

Then employing the interpolation inequality (fractional Gagliardo-Nirenberg) as in \cite[Section 7.3]{cazenave2003semilinear}, we have 
\begin{equation}\label{eq:fGN}
\| u(t, \cdot) \|_{L^r_z} = \| \tilde u(t, \cdot) \|_{L^r_z} \leq C \| \nabla \tilde u \|_{L^2}^{n(1/2 - 1/r)} \| \tilde u \|_{L^2}^{1 - n(1/2 - 1/r)},
\end{equation}
which with \eqref{eq:tilde u decay 1} and the uniform bound on $\| u \|_{L^2}$, yields the estimate
\begin{equation}\label{eq:NLSLrdecay v1}
\| u(t, \cdot) \|_{L^r_z} \leq  C \ang{t}^{-(n/2)(1/2 - 1/r)},
\end{equation}
which is weaker than what we claim in \eqref{eq:NLSLrdecay}. Now we run the argument again, except that we strengthen the $L^4$ bound on $u$ by using \eqref{eq:NLSLrdecay v1} with $r=4$. Using this, we  improve \eqref{eq:tilde u decay 1} to 
$$
\| \nabla \tilde u(t, \cdot) \|_{L^2} \leq C \ang{t-T}^{-1/2-n/8}.
$$
Feeding this improved decay into \eqref{eq:fGN} yields \eqref{eq:NLSLrdecay}. We remark that if $V$ vanishes, then $h(t)$ is uniformly bounded, and then the argument yields the optimal decay rate $\ang{t}^{-n(1/2 - 1/r)}$ as in Cazenave. 

\end{proof}

Our next estimate establishes that, provided that the initial condition is in $H^1(\R^n) \cap \ang{z}^{-1} L^2(\R^n)$, the solution is in $L^{q}_t L^\infty_t$. In preparation for the next Proposition we recall the condition for admissible Strichartz exponents $(q, r)$ in dimension $n$: 
\begin{equation}\label{eq:admissible}
\frac{2}{q} + \frac{n}{r} = \frac{n}{2}, \quad (q, r, n) \neq (\infty, 2, 2). 
\end{equation}

\begin{prop}\label{prop:Linftybound} Assume that the function $u(t, z)$ solves \eqref{eq:NLS} with real-valued potential function $V(t, z) \in \ang{t}^{-1} L^\infty_t \modulesp^2_{\SM_{t, 0}}$, $p$ and $n$ as in \eqref{eq:np}, and  initial data $\phi \in \ang{z} L^2(\R^n) \cap H^1(\R^n)$ given at some time $T$. Then,  we have  
\begin{equation}\label{eq:ubound}
u \in L^{p-1}_t L^\infty_z
\end{equation}
globally in time (i.e.\ for as long as the solution exists). 
\end{prop}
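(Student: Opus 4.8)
The plan is to adapt the Strichartz-estimate argument of \cite[Corollary~7.3.4]{cazenave2003semilinear}, incorporating the time-dependent potential $V$ as a perturbation whose smallness at late times comes from Lemma~\ref{lem:decay}, and replacing Cazenave's decay input by the (weaker, because of $V$) decay estimate of Lemma~\ref{lem:NLSLrdecay}. Throughout we use the a priori facts already available: mass conservation (Lemma~\ref{lem:masscons}), the uniform energy bound (Lemma~\ref{lem:energybdd}) — which in particular yields a uniform-in-time bound on $\| u(t,\cdot) \|_{H^1}$ — and the decay $\| u(t,\cdot) \|_{L^r_z} \leq C \ang{t}^{-\beta(r)}$ of Lemma~\ref{lem:NLSLrdecay}, valid for $r$ in the stated ranges, with $\beta(r) > 0$.

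First, fix an $H^1$-admissible Strichartz pair $(q,r)$ — the endpoint $(2,6)$ when $n=3$, and for $n=1,2$ a pair with $r$ large and finite so that $W^{1,r}_z \hookrightarrow L^\infty_z$. Applying the inhomogeneous Strichartz estimates to Duhamel's formula for $u$ on an interval $[T, T']$, at the level of both $u$ and $\nabla u$ (using that $D_t + \Delta_0$ commutes with $\nabla$), gives
\begin{equation*}
\| u \|_{L^q_t([T,T'];W^{1,r}_z)} \leq C \| u(T,\cdot) \|_{H^1} + C \big\| |u|^{p-1}u \big\|_{L^{q_1'}_t([T,T'];W^{1,r_1'}_z)} + C \| Vu \|_{L^{q_1'}_t([T,T'];W^{1,r_1'}_z)}
\end{equation*}
for a conveniently chosen admissible pair $(q_1, r_1)$. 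For the nonlinear term, the fractional Leibniz and H\"older inequalities give a bound $\| u \|_{L^\gamma_t([T,T'];L^a_z)}^{p-1} \| u \|_{L^{q_1}_t([T,T'];W^{1,r_1}_z)}$, where $\gamma, a$ are tied to $(q_1,r_1)$ by admissibility and scaling; the crucial numerical point, equivalent to the subcriticality hypothesis $\tfrac{n}{2} < \tfrac{p+1}{p-1}$, is that the pair can be chosen so that $a$ lies in the range of Lemma~\ref{lem:NLSLrdecay} and $\gamma \beta(a) > 1$ (or $\gamma = \infty$ and $\beta(a) > 0$), so that $\| u \|_{L^\gamma_t([T,\infty);L^a_z)} \to 0$ as $T \to \infty$. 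For the potential term, $\nabla(Vu) = (\nabla V) u + V \nabla u$, and since $V \in \ang{t}^{-1} L^\infty_t \modulesp^2_{\SM_{t,0}}$, Lemma~\ref{lem:decay} puts $V$ and $\nabla V$ in $\ang{t}^{-1-\epsilon} L^\infty_t L^b_z$ for suitable $b, \epsilon > 0$, so the potential contribution over $[T,\infty)$ is $\leq C \ang{T}^{-\epsilon} \| u \|_{L^{q_1}_t([T,\infty);W^{1,r_1}_z)}$.

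Combining these, for $T$ large enough the nonlinear and potential terms are absorbed into the left side, giving $\| u \|_{L^q_t([T,\infty);W^{1,r}_z)} < \infty$, hence $u \in L^q_t([T,\infty);L^\infty_z)$ by the Sobolev embedding for our chosen $r$ (or directly by $H^1 \hookrightarrow L^\infty$ when $n=1$). The same argument applies on $(-\infty,-T]$, while on the bounded interval $[-T,T]$ one uses local $H^1$-theory plus finiteness of the interval; this yields $u \in L^q_t(\R;L^\infty_z)$. Finally, to reach the specific exponent $p-1$ one either arranges $q = p-1$ outright when $(p-1,\cdot)$ is admissible and the embedding holds (e.g.\ $n=3,\ p=3$ with $q=2$), or, in the borderline cases $n=2,\ p=3$ and $n=1,\ p=5$ where the admissible pair forcing $q = p-1$ degenerates to the excluded endpoint, interpolates in time between the Strichartz bound $u \in L^q_t L^\infty_z$ for $q$ slightly above $p-1$ and the pointwise decay of $\| u(t,\cdot) \|_{L^\infty_z}$ — obtained from Gagliardo--Nirenberg together with the uniform $H^1$ bound and the $L^a_z$-decay of $u$ — summed over dyadic time intervals.

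I expect the main obstacle to be the bookkeeping of exponents so that the bootstrap closes: one must simultaneously keep $(q_1,r_1)$ admissible, keep $a$ in the range where Lemma~\ref{lem:NLSLrdecay} applies, ensure the fractional Leibniz step is licit, and verify $\gamma \beta(a) > 1$; this last inequality, together with the weakening of the decay rate caused by the potential, is precisely where the condition $\tfrac{n}{2} < \tfrac{p+1}{p-1}$ (and the exclusion of $n=1,\ p=3$) is consumed. The borderline dimensions and powers, where the naive admissible pair is the excluded endpoint, are the other delicate point and require the supplementary dyadic interpolation described above.
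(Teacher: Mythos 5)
Your overall strategy — a global-in-time Strichartz bootstrap in $W^{1,r}_z$, with the smallness needed to absorb the nonlinear and potential terms coming from the decay of $\|u\|_{L^a_z}$ (Lemma~\ref{lem:NLSLrdecay}) and of $V,\nabla V$ (Lemma~\ref{lem:decay}) on a late interval $[T^*,\infty)$, followed by Sobolev embedding — is exactly the paper's argument, including the choice of the endpoint pair $(2,6)$ for $n=3$ and the pair $(p-1,\,2(p-1)/(p-3))$ for $n=2$, $p\geq 5$. (For $n=1$ you are working harder than necessary: since $r=\infty$ is allowed in Lemma~\ref{lem:NLSLrdecay} in dimension $1$, one gets $\|u(t)\|_{L^\infty}\lesssim\ang{t}^{-5/16}$ pointwise and $(p-1)\cdot\tfrac5{16}>1$ for $p\geq5$, so \eqref{eq:ubound} follows with no Strichartz argument at all; your ``borderline endpoint'' worry for $n=1,p=5$ is moot.)

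There is, however, a genuine gap in your treatment of the case $n=2$, $p=3$. You propose to obtain \emph{pointwise} decay of $\|u(t,\cdot)\|_{L^\infty_z}$ from Gagliardo--Nirenberg together with the uniform $H^1$ bound and the $L^a_z$-decay. No such Gagliardo--Nirenberg inequality exists in dimension $2$: scaling forces any estimate of the form $\|u\|_{L^\infty(\R^2)}\leq C\|u\|_{L^a}^{1-\theta}\|u\|_{\dot H^1}^{\theta}$ to degenerate to $\theta=1$, i.e.\ to the false borderline embedding $\|u\|_{L^\infty(\R^2)}\lesssim\|\nabla u\|_{L^2}$. So the pointwise-decay input to your dyadic time-interpolation is unavailable, and the step does not close. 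The paper circumvents this by first running the bootstrap to get $u\in L^q_tW^{1,r}_z$ for \emph{all} admissible non-endpoint pairs, and then using the elementary interpolation
\begin{equation*}
\|u\|_{L^\infty(\R^2)}\leq C\,\|u\|_{W^{1,12}(\R^2)}^{1/2}\,\|u\|_{L^{12}(\R^2)}^{1/2},
\end{equation*}
in which the first factor is controlled only in the time-averaged sense $L^{12/5}_t$ (via Strichartz, not via $H^1$), while the second factor decays pointwise like $\ang{t}^{-5/8}$; H\"older in time then yields $u\in L^2_tL^\infty_z$. If you replace your Gagliardo--Nirenberg step by an inequality of this type (trading the $H^1$ factor for a Strichartz-controlled $W^{1,r}$ factor with $r>2$), your argument closes in this case as well.
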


\begin{proof}
Without loss of generality, we work on the interval $[T, \infty)$. The same argument applies to $(-\infty, T]$. 

The argument is different in each dimension. We start with $n=3$, in which case we have $p=3$. In this case, it is easiest to deduce \eqref{eq:ubound} from the Sobolev bound 
\begin{equation}\label{eq:uSob}
u \in L^{2}_t W^{1,6}_z,
\end{equation}
using the fact that $(2, 6)$ are admissible exponents in dimension $3$. 
We argue as in \cite[Corollary 7.3.4]{cazenave2003semilinear}. The fact that $u$ is in $L^2_{t} W^{1,6}_z$ on compact time-intervals follows from standard local existence theory, see e.g. \cite[Remark~4.4.3]{cazenave2003semilinear}.  So the issue is to obtain the estimate globally in time, assuming that the solution exists on an infinite time interval. We write the solution in terms of the free propagator $e^{-it\Delta}$:
\begin{equation}
u(t) = e^{-i(t-T)\Delta} \phi(T) - i \int_T^t e^{-i(t-s)\Delta} \big(V(s)u(s) + |u|^2(s) u(s) \big) \, ds. 
\end{equation}
We commute a $z$-derivate through the free propagator. The Strichartz estimate for the free propagator shows that the term $e^{-i(t-T)\Delta} \phi(T)$ is in the desired space $L^2_t W^{1,6}_z$ globally in time. So consider the second term. We use the inhomogeneous Strichartz estimate to show that the $L^2_t W^{1,6}_z$ norm of the second term is bounded by 
$$
C \| V(s)u(s) + |u|^2(s) u(s) \|_{L^{2}_t W^{1, 6/5}_z}.
$$
We work on a finite interval $[T, T^{**}]$ that we break up into $[T, T^*] \cup [T^*, T^{**}]$. Using the triangle inequality we write 
\begin{multline}\label{eq:break}
C \| V(s)u(s) + |u|^2(s) u(s) \|_{L^{2}([T, T^{**}]), W^{1, 6/5}_z} \\ \leq C \| V(s)u(s) + |u|^2(s) u(s) \|_{L^{2}([T, T^{*}]), W^{1, 6/5}_z}  \\ + C \| V(s)u(s) + |u|^2(s) u(s) \|_{L^{2}([T^*, T^{**}]), W^{1, 6/5}_z} . 
\end{multline}
Then we have for any time interval $I$
\begin{multline}\label{eq:Vu3}
 \| V(s)u(s)  \|_{L^{2}_t(I) W^{1, 6/5}_z} \leq C \Big( \| V \|_{L^\infty_t(I) L^{3/2}_z} \| u \|_{L^2_t(I) W^{1,6}_z} \\ +  \| \nabla V \|_{L^\infty_t(I) L^{3/2}_z} \| u \|_{L^2_t(I) L^{6}_z} \Big).
\end{multline}
For $s \in [T^*, \infty)$ for large $T^*$, the norms $\| V \|_{L^\infty_t L^{3/2}_z}$ and $\| \nabla V \|_{L^\infty_t L^{3/2}_z}$ are bounded by $O(|t|^{-1/3})$, see \eqref{eq:3/2}.  The second term is estimated by 
\begin{equation}\label{eq:uuu3}
C \| |u|^2(s) u(s) \|_{L^{2}_t W^{1, 6/5}_z} \leq \| u \|_{L^\infty_t L^2_z} \| u \|_{L^\infty_t L^6_z} \| u \|_{L^{2}_t W^{1, 6}_z}.
\end{equation}
For $s \in [T^*, \infty)$ for large $T^*$, the norm $ \| u \|_{L^\infty_t L^6_z}$ is bounded by $C \ang{T^*}^{-1/2}$ as we saw in Lemma~\ref{lem:NLSLrdecay}. We now fix some sufficiently large  $T^*$, and treat the first term on the RHS of \eqref{eq:break} as a constant. This gives us  
\begin{equation}\label{eq:break2}
\| u  \|_{L^2([T, T^{**}]), W^{1, 6}_z}  \leq C  + \frac1{2} \| u  \|_{L^2([T^*, T^{**}]), W^{1, 6}_z} . 
\end{equation}
This inequality implies that $\| u  \|_{L^2([T, T^{**}]), W^{1, 6}_z}$ is bounded above as $T^{**} \to \infty$, showing that this norm is bounded globally in time. The Sobolev embedding from $W^{1,6}(\R^3)$ to $L^\infty(\R^3)$ completes the proof of \eqref{eq:ubound} in the case $n=3$. 

When $n=2$, then $p \geq 3$ is odd. First we treat the case that $p \geq 5$. In that case, we can take a Strichartz estimate with admissible exponents $(q = p-1, r)$, so $r = 2(p-1)/(p-3)$ is in the range $(2, 4]$. We apply the inhomogeneous Strichartz estimate choosing, somewhat arbitrarily, dual exponents $(4/3, 4/3)$ which are dual to admissible exponents $(4,4)$. The argument is the same as above, but the numerology is different. In place of \eqref{eq:Vu3} we have 
\begin{multline}\label{eq:Vu2}
 \| V(s)u(s)  \|_{L^{4/3}_t(I) W^{1, 4/3}_z} \leq C \Big( \| V \|_{L^s_t(I) L^{\sigma}_z} \| u \|_{L^{p-1}_t(I) W^{1,r}_z} \\ +  \| \nabla V \|_{L^s_t(I) L^{\sigma}_z} \| u \|_{L^{p-1}_t(I) L^{r}_z} \Big).
\end{multline}
Here, $s$ lies in the range $[4/3, 4]$ and $\sigma$ lies in the range $[2, 4]$. We easily check that the norm of $V$ and $\nabla V$ in the preceding displayed equation are finite, and tend to zero as the left endpoint of the interval $I$ tends to $+\infty$. Similarly, in place of \eqref{eq:uuu3} we have 
\begin{equation}\label{eq:uuu3}
C \| |u|^{p-1}(s) u(s) \|_{L^{4/3}_t(I) W^{1, 4/3}_z} \leq \| u \|^{p-1}_{L^s_t(I) L^\sigma_z} \| u \|_{L^{p-1}_t(I) W^{1, r}_z},
\end{equation}
where 
$$
\frac1{p-1} + \frac{p-1}{s} = \frac{3}{4}, \quad \frac1{r} + \frac{p-1}{\sigma} = \frac{3}{4}.
$$
It is straightforward to check that $s \geq 8$ and $\sigma \geq 8$. From Lemma~\ref{lem:NLSLrdecay} we see that $\| u(t) \|_{L^{\sigma}}$ decays faster than $\ang{t}^{-9/16}$, so this is certainly in $L^s_t$, and as the left-hand endpoint of $I$ tends to infinity, this norm tends to zero. This allows us to repeat the argument above in dimension $n=2$, provided $p \geq 5$. 

This argument does not work in the case $n=2$, $p=3$ because the endpoint, $L^2$-in-time Strichartz estimate fails in dimension $2$. However, we can show, exactly as above, that we obtain a $L^q_t W^{1,r}_z$ estimate for all admissible exponents $(q, r)$. To obtain \eqref{eq:ubound} in this case we use an elementary interpolation estimate, for example 
\begin{equation}\label{eq:interpolation88}
\| u \|_{L^\infty(\R^2)} \leq C \| u \|_{W^{1,12}(\R^2)}^{1/2} \| u \|_{L^{12}(\R^2)}^{1/2} .
\end{equation}
We then use the $L^{12/5}_t$-decay of the $W^{1,12}$ norm arising from the Strichartz argument, and the $\ang{t}^{-5/8}$ decay of the $L^12$ norm given by Lemma~\ref{lem:decay} to obtain (from H\"older's inequality) $L^2$ decay of the $L^\infty$ norm. The proof of \eqref{eq:interpolation88} is elementary: we write 
$$
\| u \|_{L^\infty}^2 \leq  \sup_x \frac1{2\pi} \int_{\R^2} \frac{\big|  \nabla (\chi(x-y) |u(x-y)|^2) \big| }{|x-y|} \, dy,
$$
where $\chi(0) = 1$ and $\chi \in C_c^1(\R^2)$ is supported, say, in the ball of radius $1$. This just arises from integrating the derivative of $\chi |u|^2(x-y)$ along each ray through $x$, and averaging. We then apply H\"older's inequality to this integral, choosing the exponent $6/5$ for $|x-y|^{-1}$,  exponent $12$ for $|u|$ and exponent $12$ for $|\nabla (\chi u)|$.  

In the case $n=1$, 
estimate \eqref{eq:ubound} follows immediately from Lemma~\ref{lem:NLSLrdecay}. 
\end{proof}

\begin{rmk}\label{rem:upperboundp} For the Strichartz argument above to work, we need to show that the solution is $L^{p-1}$ in time with values in $W^{1, r}$ in space, where $r > n$, so that we can apply Sobolev embedding and obtain $L^{p-1}_t L^\infty_z$. For this to be possible in the linear case, we require that $(p-1, r)$ are admissible exponents, where $r > n$, and this is equivalent to the inequality
$$
p < \frac{n+2}{n-2}.
$$
This explains the upper bound on $p$ in \eqref{eq:np}. It is likely that the case $n=3, p=5$, which is energy-critical, can also be done, given the global well-posedness theory is available \cite{CKSTT2008} but this is a much more delicate case and is beyond the scope of this article. For the energy-supercritical cases $n=3, p \geq 7$ it seems likely that finite-time blowup may occur, given the result of \cite{MRRS2022}; while blow-up was not proved in dimension $n=3$, there seems no reason why finite-time blowup would not also occur in this dimension. 
\end{rmk}

\begin{coro}\label{cor: global Wk bound} Suppose the global solution $u(t, \cdot)$ of Corollary~\ref{cor:globalH1} is in $\modulesp^k_{\SM_t}$, locally uniformly in $t$. Then it is in $L^\infty(\R_t; \modulesp^k_{\SM_t})$, that is, the $\modulesp^k_{\SM_t}$ norm is bounded uniformly in $t$. 
\end{coro}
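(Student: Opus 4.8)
The plan is to close a Gronwall estimate for the module norm $g(t) := \|u(t, \cdot)\|_{\modulesp^k_{\SM_t}}$, feeding in the nonlinear estimate of Proposition~\ref{prop: N[u] bound}, the potential estimate \eqref{eq: pot term bound}, and --- crucially --- the global-in-time bound $u \in L^{p-1}_t L^\infty_z$ of Proposition~\ref{prop:Linftybound}. By hypothesis $g$ is finite and locally bounded on $\R$; the point is to upgrade this to a bound uniform in $t$.

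First I would fix a base time, say $t_1 = 0$, work on an interval $[0, t]$ for arbitrary $t > 0$ (the interval $[t, 0]$ for $t < 0$ being treated by the time-reversed argument), and write the defocusing equation \eqref{eq:NLS} in Duhamel form with respect to the free propagator, $(D_t + \Delta_0)u = F$ with $F = -Vu - |u|^{p-1}u$. Since $g$ is locally bounded, $F$ lies in $L^1_t \modulesp^k_{\SM_t}$ on $[0,t]$, so the Strichartz-type estimate \eqref{eq:Duhamelbound} of Lemma~\ref{lem:symm} (whose proof already absorbs the harmless switch of generator sets at $t = \pm 1/2$) gives
\begin{equation}
\sup_{0 \leq s \leq t} g(s) \;\leq\; g(0) + \int_0^t \Big( \| V(s) u(s) \|_{\modulesp^k_{\SM_s}} + \big\| \, |u|^{p-1} u(s) \big\|_{\modulesp^k_{\SM_s}} \Big) \, ds .
\end{equation}
By Proposition~\ref{prop: N[u] bound}, $\big\| \, |u|^{p-1} u(s) \big\|_{\modulesp^k_{\SM_s}} \leq C \| u(s) \|_{L^\infty}^{p-1} g(s)$ with $C$ independent of $s$; and by the computation in \eqref{eq: pot term bound} (using the standing hypothesis $V \in \ang{t}^{-1} L^\infty_t \modulesp^k_{\SM_{t,0}}$ and the multiplication rule of Proposition~\ref{prop: module regularity, multiplication rule}), $\| V(s) u(s) \|_{\modulesp^k_{\SM_s}} \leq C \ang{s}^{-1-n/2} g(s)$. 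Hence, with $a(s) := C \| u(s) \|_{L^\infty}^{p-1} + C \ang{s}^{-1-n/2}$,
\begin{equation}
\sup_{0 \leq s \leq t} g(s) \;\leq\; g(0) + \int_0^t a(s) \, g(s) \, ds .
\end{equation}

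The key observation is that $a \in L^1(\R)$: the first term is integrable precisely because Proposition~\ref{prop:Linftybound} gives $u \in L^{p-1}_t L^\infty_z$ \emph{globally} in time, and the second because $1 + n/2 > 1$. Writing $G(t) = \sup_{[0,t]} g$, which is non-decreasing and satisfies $G(t) \leq g(0) + \int_0^t a(s) G(s) \, ds$, Gronwall's inequality gives $G(t) \leq g(0) \exp\big( \int_0^t a \big) \leq g(0)\, e^{\|a\|_{L^1(\R)}}$, a bound independent of $t$. Running the same argument on $(-\infty, 0]$ yields the matching bound there, and together these give $g \in L^\infty(\R_t)$, i.e.\ $u \in L^\infty(\R_t; \modulesp^k_{\SM_t})$.

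I do not expect a serious obstacle once Lemma~\ref{lem:symm}, Proposition~\ref{prop: N[u] bound} and Proposition~\ref{prop:Linftybound} are available; the only genuine subtlety is the interplay between the a priori hypothesis and the conclusion. The local boundedness of $g$ is exactly what is needed to put $F$ in $L^1_t \modulesp^k_{\SM_t}$ on compact intervals so that the Duhamel estimate can even be applied, and the Gronwall step converts this into a bound that is uniform in the interval length --- which works only because the coefficient $\| u(s) \|_{L^\infty}^{p-1}$ is globally, not merely locally, integrable in time. In other words, Proposition~\ref{prop:Linftybound} is the real engine of the argument.
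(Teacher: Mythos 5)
Your proof is correct and uses exactly the same ingredients as the paper's: Duhamel's formula with the free propagator, the Strichartz-type bound \eqref{eq:Duhamelbound}, Proposition~\ref{prop: N[u] bound} for the nonlinearity, the $\ang{t}^{-1-n/2}$ decay of the potential term via Proposition~\ref{prop: module regularity, multiplication rule}, and---as the essential input---the global $L^{p-1}_t L^\infty_z$ bound of Proposition~\ref{prop:Linftybound}. The only difference is cosmetic: you close the estimate with Gronwall's inequality using the $L^1$-in-time coefficient $a(s)$, whereas the paper splits $[0,T+T^*]$ at a large fixed $T$, treats $[0,T]$ as a constant via the local-boundedness hypothesis, and absorbs the tail contribution by a smallness argument; both are valid and rely on the same integrability.
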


\begin{proof}
The solution $u(t)$ satisfies
$$
u(t) = e^{-it\Delta} u(0) -i  \int_0^t e^{-i(t-s) \Delta} \Big( V(s) u(s) + |u(s)|^{p-1} u(s) \Big) \, ds := e^{-it\Delta} u(0) + v(t).
$$
Since $u(0) \in \modulesp^k_{\SM_0}$, we find that  $u(t) \in \modulesp^k_{\SM_t}$ with the same norm as $u(0)$. For the integral term $v(t)$, we use the Strichartz-type inequality \eqref{eq:Duhamelbound} to bound 
\begin{multline}\label{eq:vbound}
\| v \|_{L^\infty([0, T + T^*]_t;  \modulesp^2_{\SM_t})} \leq  \| V u + |u|^{p-1} u \|_{L^1([0, T + T^*]_t;  \modulesp^2_{\SM_t})} \\ \leq \| V u + |u|^{p-1} u \|_{L^1([0, T]_t;  \modulesp^2_{\SM_t})} + \| V u + |u|^{p-1} u \|_{L^1([T, T + T^*]_t;  \modulesp^2_{\SM_t})}.
\end{multline}
We take $T$ large but fixed, and then consider large variable values of $T^*$. We estimate, using Propositions~\ref{prop: module regularity, multiplication rule} and \ref{prop: N[u] bound},  
\begin{multline}\label{eq:vbound2}
\| V u + |u|^{p-1} u \|_{L^1([T, T + T^*]_t;  \modulesp^2_{\SM_t})} \leq C \Big(  \| \ang{t}^{-n/2} V \|_{L^1([T, T + T^*]_t;  \modulesp^2_{\SM_{t,0}})} \\ + \| u \|_{L^{p-1}([T, T + T^*]_t; L^\infty_z)}^{p-1} \Big)  \| u \|_{L^\infty([T, T + T^*]; \modulesp^2_{\SM_t})}. 
\end{multline}
Our assumption on $V$ implies that that $\ang{t}^{-n/2} V$ is $L^1$ in time with values in $\modulesp^2_{\SM_{t,0}}$. And Proposition~\ref{prop:Linftybound} shows that $u$ is $L^{p-1}$ in time with values in $L^\infty_z$. Therefore, for  $T$ sufficiently large, the norms $\| \ang{t}^{-n/2} V \|_{L^1([T, \infty]_t;  \modulesp^2_{\SM_t})}$ and $\| u \|_{L^{p-1}([T, \infty]_t; L^\infty_z)}^{p-1}$ are small. We therefore obtain from \eqref{eq:vbound} and \eqref{eq:vbound2}, and for $T$ large enough,
$$
\| u \|_{L^\infty([T, T + T^*]; \modulesp^2_{\SM_t})} \leq C + \frac1{2} \| u \|_{L^\infty([T, T + T^*]; \modulesp^2_{\SM_t})},
$$
which gives a uniform bound on $\| u \|_{L^\infty([T, T + T^*]; \modulesp^2_{\SM_t})}$ and hence on $\| u \|_{L^\infty([0, \infty]; \modulesp^2_{\SM_t})}$. The argument for negative time is exactly analogous. 
\end{proof}

\subsection{Global solution in $L^\infty_t \modulesp^k_{\SM_t}$} \label{sec: extend the solution}

We now have the a priori estimates, in particular \eqref{eq:ubound}, that we need in order to construct a global-in-time solution in $L^\infty_t \modulesp^k_{\SM_t}$. Notice that we have a global-in-time solution $u(t) \in L^\infty_t H^1(\R^n)$ thanks to the uniform energy bound. We will construct solutions, at first on bounded time intervals, that lie in $L^\infty_t \modulesp^k_{\SM_t}$. These coincide with the existing $L^\infty_t \modulesp^k_{\SM_t}$ solution thanks to uniqueness (see Lemma~\ref{lem:symm}). 

\begin{prop}\label{prop: extend Wk solution} Given an initial value $u(T) \in \modulesp^k_{\SM_t}$, there is a solution $u(t, z) \in L^\infty_t(I) \modulesp^k_{\SM_t}$ on $I \times \R^n$, where $I = [T_*, T]$ is a (finite or infinite) time interval. The length of $I$ is bounded below by a constant  depending only on $K := \| u(T) \|_{\modulesp^k_{\SM_t}}$, and the norm of $\ang{t}^{-1/2} V$ in $ L^1(I_t;  \modulesp^k_{\SM_{t,0}})$. 
\end{prop}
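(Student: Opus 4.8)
The plan is to recast the problem as a fixed-point problem for the Duhamel map anchored at the finite time $T$, in direct parallel with Section~\ref{sec: contraction mapping}, but with the role previously played by the $\ang t$-decay near $t=+\infty$ now played (where needed) by the shortness of the interval $I$. Writing $N[u] = \pm |u|^{p-1}u$ with the sign as in \eqref{eq:NLS}, set
\begin{equation*}
\Psi(u)(t,\cdot) = e^{-i(t-T)\Delta_0} u(T) - i\int_T^t e^{-i(t-s)\Delta_0}\big(V(s)u(s) - N[u](s)\big)\,ds
\end{equation*}
(the free propagator and the signs being as in \eqref{eq: NLS, integral version} and Lemma~\ref{lem:symm}), and work in the ball
\begin{equation*}
X := \{\, u \in L^\infty_t(I)\modulesp^k_{\SM_t} : \ \|u\|_{L^\infty_t(I)\modulesp^k_{\SM_t}} \le 2K \,\}, \qquad K := \|u(T)\|_{\modulesp^k_{\SM_t}},
\end{equation*}
which is a complete metric space under the $L^\infty_t(I)\modulesp^k_{\SM_t}$-distance. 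By the isometry property \eqref{eq:isometry}, $\|e^{-i(t-T)\Delta_0}u(T)\|_{\modulesp^k_{\SM_t}} = K$ for every $t$, so the free part already lies in the ball of radius $K$; everything then reduces to making the Duhamel integral a small perturbation.

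The engine is two pointwise-in-$t$ bounds. First, since $\widetilde{Vu} = V\tilde u$, Proposition~\ref{prop: pulling out the oscillatory factor} together with the multiplication rule Proposition~\ref{prop: module regularity, multiplication rule} (available because $k\ge 2 > n/2$ for $n\le 3$) gives
\begin{equation*}
\|V(t)u(t)\|_{\modulesp^k_{\SM_t}} = \|V(t)\tilde u(t)\|_{\modulesp^k_{\SM_{t,0}}} \lesssim \ang t^{-n/2}\|V(t)\|_{\modulesp^k_{\SM_{t,0}}}\|u(t)\|_{\modulesp^k_{\SM_t}} \le \ang t^{-1/2}\|V(t)\|_{\modulesp^k_{\SM_{t,0}}}\|u(t)\|_{\modulesp^k_{\SM_t}}.
\end{equation*}
Second, combining Proposition~\ref{prop: N[u] bound} with the embedding $\modulesp^k_{\SM_t}\hookrightarrow \ang t^{-n/2}L^\infty_z$ — which follows from \eqref{eq: module-1c norm relationship}, Proposition~\ref{prop: module0-1cusp equivalence}, and the 1-cusp Sobolev embedding $H^{k,k}_{\oc}\hookrightarrow L^\infty$ for $k>n/2$ — we obtain
\begin{equation*}
\|N[u](t)\|_{\modulesp^k_{\SM_t}} \le C\|u(t)\|_{L^\infty_z}^{p-1}\|u(t)\|_{\modulesp^k_{\SM_t}} \le C\ang t^{-n(p-1)/2}\|u(t)\|_{\modulesp^k_{\SM_t}}^{p}.
\end{equation*}
Feeding these into the Strichartz-type bound \eqref{eq:Duhamelbound} of Lemma~\ref{lem:symm} (applied to the pieces of $I$ inside $\{|t|\ge 1/2\}$ and $\{|t|\le 1/2\}$ separately, as in that lemma) and bounding $\ang t^{-n(p-1)/2}\le 1$, we get for $u,v\in X$
\begin{align*}
\|\Psi(u) - e^{-i(\cdot-T)\Delta_0}u(T)\|_{L^\infty_t(I)\modulesp^k_{\SM_t}} &\le C(2K)\,\|\ang t^{-1/2}V\|_{L^1(I_t;\modulesp^k_{\SM_{t,0}})} + C(2K)^p\,|I|, \\
d(\Psi(u),\Psi(v)) &\le C\Big(\|\ang t^{-1/2}V\|_{L^1(I_t;\modulesp^k_{\SM_{t,0}})} + (4K)^{p-1}|I|\Big)\,d(u,v),
\end{align*}
where the second line uses the telescoping identity for $N[u]-N[v]$ from the proof of Proposition~\ref{prop: contraction property}. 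Now choose the length of $I$ so small that $C(4K)^{p-1}|I|\le 1/4$ and $C\|\ang t^{-1/2}V\|_{L^1(I_t;\modulesp^k_{\SM_{t,0}})}\le 1/4$; the latter is achievable by shrinking $I$ since $\ang t^{-1/2}V \in \ang t^{-3/2}L^\infty_t\modulesp^k_{\SM_{t,0}} \subset L^1_t\modulesp^k_{\SM_{t,0}}$. Then $\Psi:X\to X$ and $d(\Psi(u),\Psi(v))\le\tfrac12 d(u,v)$, so Banach's fixed-point theorem yields a unique $u\in X$ with $\Psi(u)=u$; this $u$ solves the integral equation with data $u(T)$ at $t=T$, hence \eqref{eq:NLS}, and lies in $L^\infty_t(I)\modulesp^k_{\SM_t}$. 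The length of $I$ obtained in this way depends only on $K$ and on $\|\ang t^{-1/2}V\|_{L^1(I_t;\modulesp^k_{\SM_{t,0}})}$, as asserted; on regions where both quantities are already small (e.g.\ large $|t|$ with small $V$) one may take $I$ semi-infinite. The construction is symmetric under time reversal, so $I$ may be placed to the left or right of $T$, and uniqueness in the larger class $L^\infty_t\modulesp^k_{\SM_t}$ follows from Lemma~\ref{lem:symm}(i).

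I do not expect a genuine analytic obstacle here: every estimate has already been proved. The only point requiring care — and the one that makes this more than a verbatim copy of Section~\ref{sec: contraction mapping} — is the time-dependence of the spaces $\modulesp^k_{\SM_t}$. One must pass consistently between $u$ and $\tilde u$ via Proposition~\ref{prop: pulling out the oscillatory factor}, and between $\modulesp^k_{\SM_{t,0}}$ and the weighted 1-cusp spaces via \eqref{eq: module-1c norm relationship}, so that the multiplication rule and the $L^\infty$-embedding are applied with the correct powers of $\ang t$; and one must invoke Lemma~\ref{lem:symm} on $\{|t|\ge 1/2\}$ and $\{|t|\le 1/2\}$ separately so that the two equivalent-but-distinct generator sets at $t=\pm 1/2$ cause no difficulty. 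Beyond this bookkeeping the proof is routine.
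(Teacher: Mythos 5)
Your proposal is correct and follows essentially the same route as the paper: a contraction-mapping argument for the Duhamel map anchored at $t=T$ on the ball of radius $2K$ in $L^\infty_t(I)\modulesp^k_{\SM_t}$, using the Strichartz-type bound \eqref{eq:Duhamelbound} together with Propositions~\ref{prop: module regularity, multiplication rule} and \ref{prop: N[u] bound}, and shrinking $I$ so that the quantity \eqref{eq:key quantity} is small. The only (cosmetic) differences are that you bound $\ang{t}^{-n(p-1)/2}\le 1$ to make the smallness condition read as a condition on $|I|$, where the paper keeps $\|\ang{t}^{-n(p-1)/2}\|_{L^1(I_t)}$, and you route the nonlinear estimate through the $L^\infty$-embedding rather than iterating the multiplication rule directly.
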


\begin{proof} We set up a functional defined on $L^\infty_t(I) \modulesp^k_{\SM_t}$ for which the solution $u$ is a fixed point. 
Any solution $u$ satisfies, by definition,
\begin{equation}\label{eq:IVP}
u(t, \cdot) = e^{-i(t - t_1)\Delta} u(t_1, \cdot) +i \int_t^{t_1} e^{-i(t-s) \Delta} \Big( V(s, \cdot) u(s, \cdot) + |u(s, \cdot)|^{p-1} u(s, \cdot) \Big) \, ds. 
\end{equation}
The solution is thus a fixed point of the map $\Phi$ defined by 
\begin{equation}\label{eq:Phi IVP}
(\Phi u)(t, \cdot) := e^{-i(t - T)\Delta} u(T, \cdot) +i \int_t^{T} e^{-i(t-T-s) \Delta} \Big( V(s, \cdot) u(s, \cdot) + |u(s, \cdot)|^{p-1} u(s, \cdot) \Big) \, ds. 
\end{equation}
We claim that $\Phi$ is a contraction on the closed ball $B$  of radius $2K$  in the space $L^\infty([T_*; T]_t; \modulesp^k_{\SM_t})$, provided that the length of the interval $I$ is small relative to $K$ and provided $I$ is chosen so that $\| \ang{t}^{-1/2}  V \|_{ L^1(I_t;  \modulesp^k_{\SM_{t,0}})}$ is sufficiently small.  To see this, we first show that $\Phi$ maps $B$ to itself. We use the Strichartz-type inequality \eqref{eq:Duhamelbound} to bound 
$$
\| \Phi(u) \|_{L^\infty_t \modulesp^k_{\SM_t}} \leq  K + C \| V u + |u|^{p-1} u \|_{L^1_t \modulesp^k_{\SM_t}}.
$$
Thus it suffices to show that $C \| V u + |u|^{p-1} u \|_{L^1_t \modulesp^k_{\SM_t}} \leq K$. Using Propositions~\ref{prop: module regularity, multiplication rule} and \ref{prop: N[u] bound}, and the definition of $B$, we estimate this by  
\begin{multline}\label{eq: L1Wk bound}
C   \| \ang{t}^{-n/2} V \|_{L^1(I_t;  \modulesp^k_{\SM_t})}  \| u \|_{L^\infty(I_t;  \modulesp^k_{\SM_t})} + \| \ang{t}^{-\frac{n}{2}(p-1)} \|_{L^1(I_t)}  \| u \|_{L^\infty(I_t;  \modulesp^k_{\SM_t})}^p  \\
\leq C \Big(  \| \ang{t}^{-n/2} V \|_{L^1(I_t;  \modulesp^k_{\SM_t})} + K^{p-1} \| \ang{t}^{-\frac{n}{2}(p-1)} \|_{L^1(I_t)} \Big)  K  
\end{multline}
and we see that, as claimed, provided that the length of $I$ is small, this is bounded by  $2K$. Thus, $\Phi$ maps $B$ into itself. 

The proof that $\Phi$ is a contraction is very similar to the proof in Proposition~\ref{prop: contraction property}; we omit the details. The key point, as with the proof just given, is that the quantity 
\begin{equation}\label{eq:key quantity}
  \| \ang{t}^{-n/2} V \|_{L^1(I_t;  \modulesp^k_{\SM_t})} + K^{p-1} \| \ang{t}^{-\frac{n}{2}(p-1)} \|_{L^1(I_t)}
\end{equation}
should be sufficiently small. 

We thus find a unique fixed point for the map $\Phi$. This is a solution to \eqref{eq:NLS} and, in particular, is a finite energy solution, hence coincides with the global solution constructed in Corollary~\ref{cor:globalH1}. 
\end{proof}

Now, putting together Corollary~\ref{cor: global Wk bound} and Proposition~\ref{prop: extend Wk solution}, we see that there is a global solution in $L^\infty_t \modulesp^k_{\SM_t}$. 

\begin{prop}\label{prop:soln-global}
There is a solution $u \in L^\infty_t \modulesp^k_{\SM_t}$ of the nonlinear Schr\"odinger equation \eqref{eq:NLS} with final state  $f \in \modulesp^k_{\SN}$ as $t \to +\infty$. 
\end{prop}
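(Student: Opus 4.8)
The plan is to patch together the large-time solution of Proposition~\ref{prop: exist solution for large time}, the global finite-energy solution of Corollary~\ref{cor:globalH1}, the local-in-time $\modulesp^k_{\SM_t}$ existence theory of Proposition~\ref{prop: extend Wk solution}, and the a priori bound of Corollary~\ref{cor: global Wk bound}. Proposition~\ref{prop: exist solution for large time} gives a solution $u$ on $[S,\infty)$ with $\|u(t)\|_{\modulesp^k_{\SM_t}}\le 2K$ there and with final state $f$ as $t\to+\infty$; by Corollary~\ref{cor:globalH1} this $u$ extends to a global solution in $L^\infty_t H^1(\R^n)$, and by the uniqueness statement in Lemma~\ref{lem:symm} any $\modulesp^k_{\SM_t}$-solution we produce on a subinterval must coincide with this global $H^1$ solution. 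So it suffices to show that the global $H^1$ solution lies in $\modulesp^k_{\SM_t}$ locally uniformly in $t$ on all of $\R$; Corollary~\ref{cor: global Wk bound} then upgrades this to $u\in L^\infty(\R_t;\modulesp^k_{\SM_t})$, and the final-state claim is inherited from Proposition~\ref{prop: exist solution for large time} on $[S,\infty)$.

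Fix $T<S$; we prove $u\in L^\infty([T,S];\modulesp^k_{\SM_t})$. By Proposition~\ref{prop:Linftybound} (applied with initial time $S$, running backward) $u\in L^{p-1}([T,S];L^\infty_z)$, and by hypothesis $\|V(t,\cdot)\|_{\modulesp^k_{\SM_{t,0}}}\in L^1([T,S])$. Letting $C$ denote the constant from \eqref{eq:Duhamelbound} together with Propositions~\ref{prop: module regularity, multiplication rule} and \ref{prop: N[u] bound}, we may therefore pick a finite partition $T=s_0<s_1<\dots<s_N=S$ such that on each block $I_j=[s_{j-1},s_j]$
\begin{equation*}
C\Big(\|\ang{t}^{-n/2}V\|_{L^1(I_j;\,\modulesp^k_{\SM_{t,0}})}+\|u\|_{L^{p-1}(I_j;\,L^\infty_z)}^{p-1}\Big)\le\tfrac12 .
\end{equation*}

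We bootstrap across the blocks from right to left. Suppose inductively that $u\in L^\infty([s_j,S];\modulesp^k_{\SM_t})$ with $\|u(s_j)\|_{\modulesp^k_{\SM_{s_j}}}\le M_j$ (with $s_N=S$, $M_N=2K$); we extend $u$ over $I_j$ with a controlled norm. Starting from $u(s_j)$, Proposition~\ref{prop: extend Wk solution} extends $u$ (uniquely, so in fact to the global $H^1$ solution) to an $L^\infty_t\modulesp^k_{\SM_t}$-solution on an interval of positive length depending only on $M_j$ and on $\|\ang{t}^{-1/2}V\|_{L^1([T,S];\,\modulesp^k_{\SM_{t,0}})}$; on this interval $J\subset I_j$, Duhamel's formula based at $s_j$ together with \eqref{eq:Duhamelbound} and the two multiplication estimates gives $\|u\|_{L^\infty(J;\,\modulesp^k_{\SM_t})}\le M_j+\tfrac12\|u\|_{L^\infty(J;\,\modulesp^k_{\SM_t})}$, hence $\|u\|_{L^\infty(J;\,\modulesp^k_{\SM_t})}\le 2M_j$. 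Thus the norm stays $\le 2M_j$ throughout $J$, so a further application of Proposition~\ref{prop: extend Wk solution} from the left endpoint of $J$ extends $u$ by a further amount $\ge\delta_1>0$, where $\delta_1$ is now governed by $2M_j$ and the same $V$-norm, after which the same Duhamel estimate over the enlarged interval again caps the norm at $\le 2M_j$. Iterating, since each step advances by at least $\delta_1$, after finitely many steps $u\in L^\infty(I_j;\modulesp^k_{\SM_t})$ with norm $\le 2M_j$; set $M_{j-1}=2M_j$. After $N$ blocks we obtain $u\in L^\infty([T,S];\modulesp^k_{\SM_t})$, and since $T<S$ was arbitrary, $u$ lies in $\modulesp^k_{\SM_t}$ locally uniformly in $t$, which by Corollary~\ref{cor: global Wk bound} finishes the proof. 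The main obstacle is precisely this bootstrap step: the local existence time furnished by Proposition~\ref{prop: extend Wk solution} degenerates as the $\modulesp^k_{\SM_t}$-norm grows, so one must exclude finite-time blow-up of that norm; the partition above does this, because the global bound $u\in L^{p-1}_t L^\infty_z$ from Proposition~\ref{prop:Linftybound} and the decay of $V$ make the Duhamel contribution of $Vu\mp|u|^{p-1}u$ have operator norm $\le\tfrac12$ in $\modulesp^k_{\SM_t}$ on each block, which caps the norm block by block and keeps the local intervals from shrinking to zero.
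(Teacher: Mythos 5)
Your proposal is correct and follows essentially the same route as the paper: patch the large-time solution of Proposition~\ref{prop: exist solution for large time} together with the local $\modulesp^k_{\SM_t}$ existence of Proposition~\ref{prop: extend Wk solution}, using the $L^{p-1}_t L^\infty_z$ bound and the decay of $V$ (as in Corollary~\ref{cor: global Wk bound}) to keep the norm from blowing up so that the extension intervals stay bounded below. You spell out the continuation/bootstrap step in more detail than the paper does, but the mechanism is identical.
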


\begin{proof}
This is just a matter of putting together the Propositions proved above. First, Proposition~\ref{prop: exist solution for large time} shows that there is a solution for some time interval $[S, \infty)$. For this solution, Proposition~\ref{prop: extend Wk solution} shows that it can be extended for some time interval. Then the a priori estimate of Corollary~\ref{cor: global Wk bound} shows that the $L^\infty_t \modulesp^k_{\SM_t}$ norm stays uniformly bounded, so the time interval on which we are able to extend remains bounded below. Thus, we can repeatedly extend the solution and cover the whole time interval. \end{proof}

\begin{rmk}
In fact, we only need to iterate the construction of Proposition~\ref{prop: extend Wk solution} finitely many times, because after a finite number of steps, the norm of the key quantity \eqref{eq:key quantity} will be sufficiently small even when we take $I$ to be the remaining infinite interval. 
\end{rmk}

\subsection{Convergence to a final state as $t \to -\infty$}
\label{sec: convergence on the other end}
Let $u$ be the global solution we have obtained in $L^\infty_t\modulesp^k_{\mathcal{M}_t}(\R^{n+1})$. In this section we show that it converges to a `final' state as $t \rightarrow -\infty$.

We write the solution in terms of an initial value at $t=t_1$ as in \eqref{eq:IVP}. As we have seen in Section~\ref{sec: extend the solution}, the integral in $s$ converges all the way to $s = -\infty$. So we can write $u$ in the form 
\begin{multline}\label{eq:finalstate-}
u(t, \cdot) = e^{-i(t - t_1)\Delta} u(t_1, \cdot) +i \int_{-\infty}^{t_1} e^{-i(t-s) \Delta} \Big( V(s, \cdot) u(s, \cdot) + |u(s, \cdot)|^{p-1} u(s, \cdot) \Big) \, ds \\
-i \int_{-\infty}^{t} e^{-i(t-s) \Delta} \Big( V(s, \cdot) u(s, \cdot) + |u(s, \cdot)|^{p-1} u(s, \cdot) \Big) \, ds \\
= e^{-it\Delta} \Bigg( e^{it_1 \Delta} u(t_1, \cdot) + \int_{-\infty}^t e^{is \Delta} \Big( V(s, \cdot) u(s, \cdot) + |u(s, \cdot)|^{p-1} u(s, \cdot) \Big) \, ds \Bigg) \\ -i \int_{-\infty}^{t} e^{-i(t-s) \Delta} \Big( V(s, \cdot) u(s, \cdot) + |u(s, \cdot)|^{p-1} u(s, \cdot) \Big) \, ds .
\end{multline}

\begin{prop}\label{prop:convergence-infty} The global solution constructed in Section~\ref{sec: extend the solution} solves the final state problem as $t \to -\infty$ with final state 
\begin{equation}\label{eq:fminus}
f_- = \SF^{-1} \Bigg( e^{it_1 \Delta} u(t_1, \cdot) + \int_{-\infty}^t e^{is \Delta} \Big( V(s, \cdot) u(s, \cdot) + |u(s, \cdot)|^{p-1} u(s, \cdot) \Big) \, ds \Bigg).
\end{equation}
The final state $f_-(\zeta)$ belongs to the same space, $\modulesp^k_{\SN}(\R^n_\zeta)$, as the final state $f_+$, and the pseudoconformal transform $U(\zeta, \tb{t})$, for large negative time, of $u$ converges to $f_-$ as $\tb{t} \to 0$ in the same senses as in Proposition~\ref{prop: exist solution for large time}.
\end{prop}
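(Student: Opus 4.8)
The plan is to read off $f_-$ from the representation \eqref{eq:finalstate-} and reduce the convergence claim to two ingredients already in hand: an $L^1$-in-time bound on the forcing term $F := Vu + |u|^{p-1}u$, and the free-evolution convergence argument already carried out (for $t\to+\infty$) in the proof of Proposition~\ref{prop: exist solution for large time}.

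\textbf{The $L^1_t$ bound on the forcing.} First I would record --- as was already used in Section~\ref{sec: extend the solution} --- that $\|F(s,\cdot)\|_{\modulesp^k_{\SM_s}} \in L^1_s$, with integrable tails. Since $V$ is real-valued, the oscillatory factor $e^{-i|z|^2/4t}$ passes through the product $Vu$, so $\|Vu(t)\|_{\modulesp^k_{\SM_t}} = \|V\tilde u(t)\|_{\modulesp^k_{\SM_{t,0}}} \lesssim \ang{t}^{-n/2}\|V(t)\|_{\modulesp^k_{\SM_{t,0}}}\|\tilde u(t)\|_{\modulesp^k_{\SM_{t,0}}} \lesssim \ang{t}^{-1-n/2}$ by Proposition~\ref{prop: module regularity, multiplication rule}, the hypothesis $V\in\ang{t}^{-1}L^\infty_t\modulesp^k_{\SM_{t,0}}$, and the global bound $u\in L^\infty_t\modulesp^k_{\SM_t}$ of Proposition~\ref{prop:soln-global}; while iterating Proposition~\ref{prop: module regularity, multiplication rule} gives $\||u|^{p-1}u(t)\|_{\modulesp^k_{\SM_t}} \lesssim \ang{t}^{-n(p-1)/2}\|u(t)\|^{p}_{\modulesp^k_{\SM_t}} \lesssim \ang{t}^{-n(p-1)/2}$ (alternatively, Proposition~\ref{prop: N[u] bound} together with $u\in L^{p-1}_tL^\infty_z$ from Proposition~\ref{prop:Linftybound}). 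Under \eqref{eq:np} both exponents are strictly less than $-1$, so in addition $\int_{-\infty}^t\|F(s,\cdot)\|_{\modulesp^k_{\SM_s}}\,ds = O(|t|^{-\gamma})$ as $t\to-\infty$, with $\gamma$ as in \eqref{eq:rateofconv}.

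\textbf{$f_-$ is well defined, lies in $\modulesp^k_\SN$, and is the final state.} Since $e^{-i(t-T)\Delta}$ is, by the isometry \eqref{eq:isometry} (up to the harmless equivalence constants incurred at $|t|=1/2$), a bounded map $\modulesp^k_{\SM_T}\to\modulesp^k_{\SM_t}$ uniformly in $T$, the previous step shows that the $s$-integral defining $g_\infty := \SF f_-$ in \eqref{eq:fminus} converges absolutely in $\modulesp^k_{\SM_0}$; moreover one checks --- by differentiating in $t$ and using \eqref{eq:IVP} --- that the right side of \eqref{eq:fminus} is genuinely $t$-independent. Hence $e^{-it\Delta}g_\infty = \mathcal{P}_0 f_-$ lies in $L^\infty_t\modulesp^k_{\SM_t}$, and evaluating the norm identity \eqref{eq: module-1c norm relationship} at a fixed nonzero time, together with \eqref{eq:isometry} and Proposition~\ref{prop: module0-1cusp equivalence}, gives $\|f_-\|_{\modulesp^k_\SN}\sim\|g_\infty\|_{\modulesp^k_{\SM_0}}<\infty$; thus $f_-$ is in the same space $\modulesp^k_\SN$ as $f_+$. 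For the convergence, rewrite \eqref{eq:finalstate-} as $u(t,\cdot) = e^{-it\Delta}g_\infty + r(t,\cdot)$ with $r(t,\cdot) = -i\int_{-\infty}^t e^{-i(t-s)\Delta}F(s,\cdot)\,ds$; the isometry \eqref{eq:isometry} gives $\|r(t,\cdot)\|_{\modulesp^k_{\SM_t}} \leq \int_{-\infty}^t\|F(s,\cdot)\|_{\modulesp^k_{\SM_s}}\,ds \to 0$ as $t\to-\infty$, with rate $O(|t|^{-\gamma})$, and --- after interpolating orders --- $O(|t|^{-\gamma\epsilon/2})$ in $\modulesp^{k-\epsilon}_\SN$. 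Passing to the pseudoconformal variables through \eqref{eq: module-1c norm relationship}, this says that the pseudoconformal transform $U(\tb{t},\cdot)$ of $u$ differs from that of $\mathcal{P}_0 f_-$ by an error tending to $0$ in $\modulesp^k_\SN$ with the stated rates. Finally, the pseudoconformal transform of $\mathcal{P}_0 f_-$ is $\tb{t}\mapsto e^{i\tb{t}\Delta_\zeta}f_-$, and $e^{i\tb{t}\Delta_\zeta}f_-\to f_-$ in $\modulesp^k_\SN$ as $\tb{t}\to 0^-$ by strong continuity of the Schr\"odinger group on $\modulesp^k_\SN$ --- exactly as in the proof of Proposition~\ref{prop: exist solution for large time}, with $\tb{t}\to 0^-$ in place of $\tb{t}\to 0^+$ (equivalent by time reversal), the same $e^{i\tb{t}\Delta}-\Id$ interpolation estimate supplying the rates \eqref{eq: convergence of final state, with rate} and \eqref{eq: convergence of final state, epsilon}. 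The triangle inequality then yields $U(\tb{t},\cdot)\to f_-$ in $\modulesp^k_\SN$ as $\tb{t}\to 0$, in all the senses of Proposition~\ref{prop: exist solution for large time}.

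\textbf{Main obstacle.} There is no essentially new difficulty; the argument is an assembly of Lemma~\ref{lem:symm} and Propositions~\ref{prop: module regularity, multiplication rule}, \ref{prop: N[u] bound}, \ref{prop:Linftybound}, \ref{prop:soln-global}, \ref{prop: exist solution for large time}. The point that needs care is the bookkeeping with the time-dependent modules: tracking the oscillatory factor $e^{-i|z|^2/4t}$, the change of variable $\zeta = z/(2t)$, and the $(2t)^{n/2}$ volume factors through \eqref{eq: module-1c norm relationship}, and recognizing that \eqref{eq:fminus} defines a $t$-independent element in spite of the $t$ appearing in it.
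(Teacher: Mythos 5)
Your proposal is correct and follows essentially the same route as the paper: identify $f_-$ from the Duhamel representation, show $\SF f_-$ lies in $\modulesp^k_{\SM_0}$ via the $L^1_t$ bound on $Vu+|u|^{p-1}u$ and the Fourier intertwining, and then reduce the convergence (with rates) to the time-reversed version of the arguments of Proposition~\ref{prop: exist solution for large time}. You merely spell out the details that the paper compresses into ``all the results of Section~\ref{sec: contraction mapping} apply by switching the direction of time,'' including the correct reading of \eqref{eq:fminus} as a $t$-independent quantity.
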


\begin{proof}
We first note that $f_-$ defined by \eqref{eq:fminus} is in $\modulesp^k_{\SN}(\R^n_\zeta)$. In fact, since the inverse Fourier transform $\SF^{-1}$ interwines multiplication/differentiation/rotation operators with differentiation/multiplication/rotation operators, it is enough to show that the function in large parentheses in \eqref{eq:fminus} is in $\modulesp^k_{\SM_0}$. This follows from the Strichartz-type estimate \eqref{eq:Duhamelbound}, estimating the integral term as in the proof of Proposition~\ref{prop: extend Wk solution}. 

Now, using the fact that the Poisson operator $\SP_0$ takes the form $e^{-it \Delta} \SF$, we can write \eqref{eq:finalstate-} in the form 
\begin{equation}\label{eq:finalstate-}
u(t, \cdot) = \SP_0 f_- -  i \int_{-\infty}^t e^{-i(t-s)s \Delta} \Big( V(s, \cdot) u(s, \cdot) + |u(s, \cdot)|^{p-1} u(s, \cdot) \Big) \, ds 
\end{equation}
which shows that $u$ solves the nonlinear Schr\"odinger equation \eqref{eq:NLS} with final state $f_-$ as $t \to -\infty$. Because of this, all the results of Section~\ref{sec: contraction mapping} apply (by switching the direction of time), in particular the convergence statements of Proposition~\ref{prop: exist solution for large time}.  
\end{proof}

\subsection{Proof of Theorem \ref{thm:main1}}
Now we combine results we have proven so far to prove Theorem \ref{thm:main1}.

The existence of the solution is constructed in Propositions~\ref{prop: contraction property} and \ref{prop: exist solution for large time} for a time interval $[S, \infty)$ and is then extended to the whole real line in Propositions~\ref{prop: extend Wk solution} and \ref{prop:soln-global}. The convergence results are proved in Proposition \ref{prop: exist solution for large time} and Proposition~\ref{prop:convergence-infty}. The uniqueness of the solution follows from the method of \cite[Proposition 4.2.1]{cazenave2003semilinear}.

We next address the continuity in time of the solution constructed above. Recall from \eqref{eq: NLS, integral version} that the solution satisfies 
\begin{align*}
u(t,\cdot) = (\mathcal{P}_0f)(t,\cdot) + i \int_t^\infty  e^{-i (t-s) \Delta_0}(\pm |u|^{p-1}u(s, \cdot) + Vu(s, \cdot)))ds .
\end{align*}
We now show that $u$ is in $ C(\R_t; \modulesp^k_{\SM_t}) \cap C^{0,1}(\R_t; \modulesp^{k-2}_{\SM_t})$, and if in addition we have $V \in C(\R_t;\modulesp^{k-2}_{\SM_t})$, then we show that $u \in C^{1}(\R_t; \modulesp^{k-2}_{\SM_t})$.

First we verify that the right hand side is lies in $C(\R_t; \modulesp^{k}_{\SM_t})$. For the first term, we have following intertwining properties
\begin{align} \label{eq: Poisson intertwining property}
\begin{split}
 (z_iD_{z_j}-z_jD_{z_i})\mathcal{P}_0g & = \mathcal{P}_0((\zeta_iD_{\zeta_j}-\zeta_jD_{\zeta_i})g)=e^{it\Delta_0}\mathcal{F}((\zeta_iD_{\zeta_j}-\zeta_jD_{\zeta_i})g), \\
 (2tD_{z_j}-z_j)\mathcal{P}_0g  & = \mathcal{P}_0(D_{\zeta_j}g)
 = e^{it\Delta_0}\mathcal{F}(D_{\zeta_j}g),\\
 D_{z_j}\mathcal{P}_0g  & = \mathcal{P}_0(\zeta_jg)
 =e^{it\Delta_0}\mathcal{F}(\zeta_jg),
\end{split}
\end{align}
and notice that the generator of $\mathcal{M}_t$ on the left hand sides are equivalent to that of $\mathcal{M}_s$ for finite $t,s$, so the desired continuity follows from the continuity of $e^{it\Delta_0}$ on $L^2$. An alternative way to see this continuity is first reducing to the $L^2$-case as above and consider $\mathcal{P}_0(g)$, then apply the dominant convergence theorem (which applies to $L^2$ as well) to $(e^{is|\zeta|^2}-e^{it|\zeta|^2})e^{iz\cdot\zeta}g$, $s \rightarrow t$.

The continuity of the integral term follows from \eqref{eq:Duhamelbound} with $g=0, F = - |u|^{p-1}u(s, \cdot) + Vu(s, \cdot)$ applied to intervals $[t,t'], t' \to t$, and notice that the integrand is bounded in $\modulesp^{k}_{\SM_t}$.

Differentiating \eqref{eq: NLS, integral version} in $t$, we have
\begin{align} \label{eq: partial t u}
\partial_tu = \partial_t\mathcal{P}_0f-|u|^{p-1}u+Vu + i \int_t^\infty  e^{-i (t-s) \Delta_0}\Delta_0(- |u|^{p-1}u(s, \cdot) + Vu(s, \cdot)))ds,
\end{align}
and next we verify that the right hand side lies in $L^\infty_t\modulesp^{k-2}_{\SM_t}$, and in $C(\R_t; \modulesp^{k-2}_{\SM_t})$ if we in addition have $V \in C(\R_t;\modulesp^{k-2}_{\mathcal{M}_{t,0}})$.

For the first term, we have
\begin{align}
\partial_t \mathcal{P}_0f = (2\pi)^{-n}\int e^{-it|\zeta|^2}e^{iz \cdot \zeta}(-i|\zeta|^2f(\zeta))d\zeta = \mathcal{P}_0(-i|\zeta|^2f)
\end{align}
Since $f \in \modulesp_{\mathcal{N}}^{k}(\R_\zeta^{n})$, we know $|\zeta|^2f(\zeta) \in \modulesp_{\mathcal{N}}^{k-2}$ and $\mathcal{F}(|\zeta|^2f(\zeta)) \in \modulesp_{\mathcal{N}}^{k-2}$, and \eqref{eq: Poisson intertwining property} shows $\mathcal{P}_0(-i|\zeta|^2f) \in L^\infty_t\modulesp_{\mathcal{M}_t}^{k-2}$ since it implies $\|\mathcal{P}_0(-i|\zeta|^2f)(t,\cdot)\|_{ \modulesp_{\mathcal{M}_t}^{k-2} }$
 is equivalent to $\| |\zeta|^2f(\zeta) \|_{\modulesp_{\mathcal{N}}^{k-2}}$.
To show $\mathcal{P}_0(-i|\zeta|^2f) \in C(\R_t;\modulesp^{k-2}_{\mathcal{M}_t})$, we can apply argument above for $\mathcal{P}_0f$ with $k$ replaced by $k-2$. 

To prove the continuity of the $|u|^{p-1}u$-term, we notice that
\begin{align} 
\begin{split}
|u(t)|^{p-1}u(t)-|u(s)|^{p-1}u(s) = & u(t)^{\frac{p+1}{2}}\overline{u(t)}^{\frac{p-1}{2}} - u(s)^{\frac{p+1}{2}}\overline{u(s)}^{\frac{p-1}{2}}
\\ = & (u(t)-u(s))(\sum_{k=0}^{\frac{p-1}{2}} (u(t) \overline{u(t)})^{\frac{p-1}{2}-k}(u(s)\overline{u(s)})^k)
\\&+ (\overline{u(t)}-\overline{u(s)})u(t)u(s)(\sum_{k=0}^{\frac{p-3}{2}} (u(t) \overline{u(t)})^{\frac{p-3}{2}-k}(u(s)\overline{u(s)})^k ),
\end{split}
\end{align}
and combining Proposition \ref{prop: module regularity, multiplication rule}, Proposition \ref{prop: pulling out the oscillatory factor} and the fact $u \in L^\infty(\R_t;\modulesp_{\mathcal{M}_t}^{k}) \cap C(\R_t;\modulesp_{\mathcal{M}_t}^{k})$, 
we know  $|u|^{p-1}u \in L^\infty(\R_t;\modulesp_{\mathcal{M}_t}^{k}) \cap C(\R_t;\modulesp_{\mathcal{M}_t}^{k})$.

The fact that $Vu \in L^\infty(\R_t;\modulesp_{\mathcal{M}_t}^{k-2})$ (in fact we can conclude that it is in $L^\infty(\R_t;\modulesp_{\mathcal{M}_t}^{k})$) when $V \in \la t \ra^{-1}L^\infty_t \modulesp^k_{\mathcal{M}_{t,0}}$ follows from Proposition \ref{prop: module regularity, multiplication rule}, Proposition \ref{prop: pulling out the oscillatory factor} and the fact $u \in L^\infty(\R_t;\modulesp_{\mathcal{M}_t}^{k})$.
When we in addition have $V \in C(\R_t;\modulesp^{k-2}_{\mathcal{M}_{t,0}})$, then the fact $Vu \in C(\R_t;\modulesp^{k-2}_{\mathcal{M}_{t,0}})$ can be proven in the same manner as the proof for the $|u|^{p-1}u$-term by writing 
\begin{align}
V(t)u(t) - V(s)u(s) = V(t)(u(t)-u(s))+u(s)(V(t)-V(s)).
\end{align}
The continuity of the last integral term in \eqref{eq: partial t u} follows from \eqref{eq:Duhamelbound} with $g=0, F = \Delta_0(- |u|^{p-1}u(s, \cdot) + Vu(s, \cdot))$ applied to intervals $[t,t'], t' \to t$.

We finally show that the time derivative $D_t u$ is in  $L^\infty_t \modulesp^{k-2}_{\mathcal{M}_t}$. For the Poisson term this has already been shown. For the integral term, it follows from applying \eqref{eq:Duhamelbound} as above and then noticing that 
\begin{align*}
||\Delta_0(- |u|^{p-1}u(s, \cdot) + Vu(s, \cdot))||_{\modulesp^{k-2}_{\mathcal{M}_t}} \leq ||- |u|^{p-1}u(s, \cdot) + Vu(s, \cdot)||_{\modulesp^{k}_{\mathcal{M}_t}}.
\end{align*}
Then an argument similar to \eqref{eq: L1Wk bound} shows the desired $L^\infty_t$-property.

\section{Metric perturbation}\label{sec:metric}

In this section we discuss the modification needed to derive our main results in the presence of a metric perturbation. For simplicity, and to conform with assumptions from previous articles \cite{staffilani2002strichartz} and \cite{gell2022propagation, gell2023scattering}, we assume that the 
metric perturbation is compactly supported in spacetime. Concretely, we assume that the metric has the form
\begin{align} \label{eq: metric with perturbation}
g(t) = g_{\mathrm{Euc}} + \tilde{g}(t),
\end{align}
where $g_{\mathrm{Euc}}$ is the Euclidean metric on $\R^n$ for each fixed $t$, and $\tilde{g}(t)$ is compactly supported in $t$ with values in smooth, compactly supported 2-cotensor fields such that $g(t)$ is strictly positive definite and nontrapping for each $t$. Since $\supp \tilde{g}$ is compact, we know there exist $c,C>0$ such that
\begin{align}
cg_0 \leq g \leq Cg_0,
\end{align}
where the inequalities are understood to mean that $Cg_0-g$ and $g-cg_0$ are positive definite. Hence the volume form, and corresponding $L^p-$norms of $g(t)$ and $g_{\mathrm{Euc}}$ are equivalent. 

In this section we will prove 

\begin{thm}\label{thm:metric} 
The conclusions of Theorem~\ref{thm:main1} continue to hold with the flat Laplacian $\Delta$ replaced by the metric Laplacian $\Delta_{g(t)}$ for a family of metrics $g(t)$ as described above. 
\end{thm}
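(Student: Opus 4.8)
The plan is to revisit every step in the proof of Theorem~\ref{thm:main1} and check that the only place the flat Laplacian $\Delta$ was used in an essential way was (a) through its exact commutation with the generators of $\SM_t$, and (b) through the Strichartz estimates for $e^{-it\Delta}$. The module structure and the multiplication/Gagliardo--Nirenberg results of Sections~2--3 are purely about the spatial vector fields and the function spaces $\modulesp^k_{\SM_t}$, so they are completely unaffected by the metric perturbation. Since $\tilde g(t)$ is compactly supported in spacetime, the operator $P_g = D_t + \Delta_{g(t)} + V$ agrees with $P_0 = D_t + \Delta + V$ for $|t| \geq T_0$ and for $|z| \geq R_0$; in particular, the \emph{final state analysis} as $t \to +\infty$ (and $t\to-\infty$) is literally unchanged, because there the metric is exactly Euclidean. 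So the contraction argument of Section~4, which takes place on an interval $[S,\infty)$ with $S$ large, goes through verbatim: for $S \geq T_0$ the equation is exactly \eqref{eq:NLS}, and the Poisson operator $\SP_0$, the intertwining identities \eqref{eq: Poisson intertwining property}, and the convergence rates \eqref{eq:rateofconv} are all as before.

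What genuinely needs new input is the \emph{global} theory of Section~5, where one must propagate the solution through the region $|t| \leq T_0$ where the metric differs from Euclidean. The first step is to replace the free Strichartz estimates by the variable-coefficient Strichartz estimates of Staffilani--Tataru \cite{staffilani2002strichartz}, which apply precisely to Schr\"odinger operators with a compactly supported, nontrapping metric perturbation (relying on Doi's estimates \cite{Doi1994cauchy, Doi1996remarks}). These give, on any bounded time interval, the full range of (local-in-time) Strichartz estimates $\| u \|_{L^q_t W^{1,r}_z} \lesssim \| u(0) \|_{H^1} + \| F \|_{L^{q'}_t W^{1,r'}_z}$ for admissible $(q,r)$; crucially, because the perturbation is compactly supported in time, for $|t|$ large one is back to the global-in-time flat Strichartz estimates, so the estimates of Proposition~\ref{prop:Linftybound} showing $u \in L^{p-1}_t L^\infty_z$ still hold globally — one simply splits $\R$ into a bounded piece (handled by Staffilani--Tataru on a compact interval, where boundedness of the energy controls everything) and two tails $|t| \geq T_0$ (handled exactly as in the flat case). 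The a priori energy and mass bounds of Lemmas~\ref{lem:masscons} and \ref{lem:energybdd} also persist: mass conservation is automatic, and the energy identity \eqref{eq:Ederiv} acquires only lower-order terms from $\nabla_{g}$ versus $\nabla$, all of which are controlled since $g - g_{\mathrm{Euc}}$ and its derivatives are bounded and compactly supported.

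The one subtlety — and the main obstacle — is Lemma~\ref{lem:symm}, the module-regularity version of the endpoint inhomogeneous Strichartz estimate. In the flat case this rested on the \emph{exact} commutation of $D_t + \Delta$ with the generators of $\SM_t$. For $\Delta_{g(t)}$ this commutation fails inside $\supp \tilde g$. The remedy is that the commutators $[\Delta_{g(t)}, A]$ for $A$ a generator of $\SM_t$ are, on the region $|t| \leq T_0$, first-order differential operators with smooth compactly supported coefficients (in particular compactly supported in $t$), hence bounded from $\modulesp^{j}_{\SM_t}$ to $\modulesp^{j-1}_{\SM_t}$ with an $L^1_t$-in-time norm. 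Thus, commuting $A_1(t)\cdots A_k(t)$ through the Duhamel formula for $P_g$ produces the same leading term as before plus an error that is an $L^1_t$ function (supported in $|t|\leq T_0$) with values in $\modulesp^{k-1}_{\SM_t} \subset \modulesp^{k}_{\SM_t}$-dual; feeding this back into the Strichartz estimate and using a Gr\"onwall / bootstrap argument on the bounded interval $[-T_0, T_0]$ closes the estimate, at the cost of losing the exact isometry \eqref{eq:isometry} (which we only used for $|t|$ large anyway, where it still holds). Concretely: one proves $\| u \|_{L^\infty([-T,T]_t;\modulesp^k_{\SM_t})} \lesssim \| u(\pm T) \|_{\modulesp^k_{\SM_t}} + \| F \|_{L^1([-T,T]_t;\modulesp^k_{\SM_t})}$ with constant depending on $T$ and on $\tilde g$, which suffices to run Proposition~\ref{prop: extend Wk solution} and Corollary~\ref{cor: global Wk bound} over the bounded region, and then patch with the flat global estimates on the tails. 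Once this local-in-time module-Strichartz estimate is in hand, the rest of Section~5 — global existence in $L^\infty_t \modulesp^k_{\SM_t}$, the final state $f_-$ as $t \to -\infty$, and the convergence with rate — follows exactly as in the flat case, since for $|t| \geq T_0$ the equation and all the relevant operators are identical to those already treated.
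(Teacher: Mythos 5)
Your overall architecture matches the paper's: the large-time contraction and final-state analysis are untouched because the perturbation is compactly supported in spacetime, the global $H^1$/Strichartz theory is supplied by Staffilani--Tataru and Doi, and the one genuinely new ingredient is a propagation estimate for the $\modulesp^k_{\SM_t}$ norm across the interval $[T_0,T_1]$ supporting $\tilde g$. However, your mechanism for that key step has a gap. You assert that $[\Delta_{g(t)},A]$, for $A$ a generator of $\SM_t$, is a \emph{first-order} operator with compactly supported coefficients, hence bounded $\modulesp^{j}_{\SM_t}\to\modulesp^{j-1}_{\SM_t}$, so that the commutator error in Duhamel's formula can be closed by Gr\"onwall. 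This is not correct: writing $\Delta_{g(t)}=\Delta+Q$ with $Q=\sum a^{jk}\partial_j\partial_k+\sum b^j\partial_j$ having compactly supported coefficients, one has $[D_t+\Delta_{g(t)},A]=[Q,A]$, and the commutator of the second-order operator $Q$ with a first-order generator (e.g.\ $[Q,D_{z_j}]=-(\partial_ja^{kl})\partial_k\partial_l-\cdots$) is generically \emph{second order} with compactly supported coefficients. Consequently the error term in the commuted Duhamel formula involves $k+1$ spatial derivatives of $u$ (localized in space), which is not controlled by the quantity $\|u\|_{L^\infty_t\modulesp^k_{\SM_t}}$ you are bootstrapping; feeding an error of regularity $k-1$ back into the Strichartz-type estimate only yields control at order $k-1$, so the iteration loses a derivative at each step and the Gr\"onwall argument does not close.

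The loss of one derivative here is exactly why the paper cannot avoid Doi's local smoothing estimate at this point. The paper's Proposition~\ref{prop: module Strichartz, with perturbation} splits $w=\chi w+(1-\chi)w$ with $\chi\equiv 1$ on $\supp\tilde g$; the compactly supported piece is handled by the $H^k$ theory of Doi/Staffilani--Tataru (module norms being equivalent to $H^k$ norms on compact sets), while for $(1-\chi)w$ an energy estimate is run in which, using $[D_t+\Delta,V_i]=0$ on $\supp(1-\chi)$, the commutator collapses to $V_1\cdots V_k[D_t+\Delta,1-\chi]w$ --- a term with $k+1$ derivatives and compactly supported coefficients --- whose contribution to $\tfrac{d}{dt}E_{V_1,\dots,V_k}$ is bounded by $C\|\tilde\chi w\|_{H^{k+1/2}}^2$. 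This is integrable in time precisely because of the local smoothing gain of half a derivative in $L^2_t$, and that is what closes the estimate. Your proposal needs this (or an equivalent device) inserted at the point where you invoke ``Gr\"onwall / bootstrap''; as written, the bootstrap fails.
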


\subsection{Results of Doi and Staffilani-Tataru}\label{subsec:DST} We will use the following results from the literature. 
First, it is well known that there is a solution operator $S(t, t') : L^2(\R^n) \to L^2(\R^n)$ such that $u(t) := S(t, t') g$ solves the equation 
\begin{equation}
(D_t + \Delta_{g(t)}) u(t) = 0, \quad u(t') = g \in L^2(\R^n). 
\end{equation}

Moreover, if $F \in L^1_t L^2_z$, then the solution to the inhomogeneous equation 
\begin{equation}
(D_t + \Delta_{g(t)}) u(t) = F(t), \quad u(T) = g
\end{equation}
is unique, and has the integral representation (Duhamel's formula)
\begin{equation}\label{eq:Duhamel-metric}
u(t, \cdot) = S(t, T) g + i \int_T^t S(t, s) F(s, \cdot) \, ds. 
\end{equation}

Results of Doi \cite{Doi1994cauchy, Doi1996remarks} show that the equation  
$$
(D_t + \Delta_g) u = 0, \quad u_{t = T_0} = u_0 \in H^s(\R^n)
$$
satisfies on any time interval $[T_0, T_1]$ and for any $\chi \in C_c^\infty(\R^n)$
$$
u \in C([T_0, T_1]; H^s(\R^n), \quad \chi u \in L^2([T_0, T_1]; H^{s+1/2} (\R^n).
$$
Second, Staffilani-Tataru \cite{staffilani2002strichartz} showed that if $v$ has spatial support on a fixed compact set and satisfies the equation 
$$
(D_t + \Delta_g) v = g \in L^2([T_0, T_1]; H^{-1/2}(\R^n), \quad v |_{t = T_0} = v_0 \in L^2(\R^n),
$$
then 
\begin{equation}\label{eq:ST}
v \in L^2([T_0, T_1]; H^{1/2}(\R^n) \cap L^q([T_0, T_1]; L^r(\R^n)),
\end{equation}
where $(q, r)$ are admissible, i.e.\ satisfy the conditions \eqref{eq:admissible}. 

Putting these results together, we show that if the initial condition $u_0$ for $u$ is in $H^1(\R^n)$, then 
$$
u \in L^q([T_0, T_1]; W^{1,r}(\R^n)).
$$
In fact, we consider the equation for $v_1 = W u$, where $W$ is any smooth vector field with compact support. Then $v_1$ satisfies
$$
(D_t + \Delta_g) v_1 = [D_t + \Delta_g, W] u = Q u , \quad v_1 |_{t = T_0} = W u_0 \in L^2(\R^n),
$$
where $Q = [\Delta_g, W]$ is a second-order differential operator with compactly supported coefficients. By the results of Doi, we have $Qu \in L^2_t H^{-1/2}(\R^n)$. Applying \eqref{eq:ST} we have 
$$
v_1 \in  L^q([T_0, T_1]; L^r(\R^n))
$$
for admissible exponents $(q, r)$. 

On the other hand, we can consider $v_2 = \chi D_j u$ where $\chi$ is supported where the metric is flat. Then $v_2$ solves the inhomogeneous equation with respect to the flat Laplacian, 
$$
(D_t + \Delta) v_2 = [D_t + \Delta, \chi D_j] u = Q_2 u ,\quad v_2 |_{t = T_0} = \chi D_j u_0 \in L^2(\R^n)
$$
where $Q_2$ is another second-order differential operator with compactly supported coefficients. We have, as before, $Q_2u \in L^2_t H^{-1/2}(\R^n)$. Then, we can write 
$$
v_2(t) = e^{i(t-T_0) \Delta} \chi D_j u_0 + \int_{T_0}^t e^{i(t-T_s) \Delta} Q_2 u(s) \, ds,
$$
and using the Christ-Kiselev lemma we find that, for admissible exponents $(q, r)$ such that $q > 2$, 
$$
v_2 \in L^q([T_0, T_1]; L^r(\R^n)).
$$
Overall, then, we see that for admissible exponents $(q, r)$ such that $q > 2$,
\begin{equation}
(D_t + \Delta) u = 0, \quad u |_{t = t_0} = u_0 \in H^1(\R^n) \Leftrightarrow u \in L^q([T_0, T_1]; W^{1,r}(\R^n)).
\end{equation}

\subsection{Extending the linear solution through the metric perturbation}
The beginning of the proof of Theorem~\ref{thm:metric} is exactly as for Theorem~\ref{thm:main1}, since there is no metric perturbation when $|t|$ is large. Therefore, the construction of Section~\ref{subsec:contraction} is unaffected. 

Our discussion in Section~\ref{subsec:DST} shows that the solution thus constructed extends to a global solution in $L^\infty_t H^1(\R^n)$. Our next task is to show that the solution is also in $L^\infty_t \modulesp^k_{\SM_{t,0}}$. This is certainly true on the time interval $[T_1, \infty)$ where the metric perturbation is supported in time in the interval $[T_0, T_1]$. So we consider extending the equation to the time interval $[T_0, T_1]$ with initial condition to match at $T_1$ with the solution for large $t$. We do this first for the equation $(D_t + \Delta_g) u = 0$, and then in a separate step, solve the full nonlinear Schr\"odinger equation  \eqref{eq:NLS}. 

\begin{prop}  \label{prop: module Strichartz, with perturbation}
Suppose $w$ solves the linear Schr\"odinger equation
\begin{align} \label{eq: linear Schrodinger}
\begin{split}
& (D_t+\Delta_g)w = 0,\\
& w|_{t=t_0} = w_0,
\end{split}
\end{align}
then we have
\begin{align} \label{eq: module strichartz, with metric perturbation}
||w(t,\cdot)||_{\modulesp^k_{\mathcal{M}_t}} \leq C(t,t_0)||w(t_0,\cdot)||_{\modulesp^k_{\mathcal{M}_{t_0}}}.
\end{align}
\end{prop}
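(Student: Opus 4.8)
The plan is to reduce to the compact time interval $[T_0,T_1]$ carrying $\supp\tilde g$. Outside this interval the equation is just $(D_t+\Delta_0)w=0$, whose flow is an isometry of $\modulesp^k_{\SM_t}$ by Lemma~\ref{lem:symm}; so it suffices to bound $\|w(t,\cdot)\|_{\modulesp^k_{\SM_t}}$ for $t\in[T_0,T_1]$ in terms of $\|w(T_1,\cdot)\|_{\modulesp^k_{\SM_{T_1}}}$, with a constant depending only on $T_0,T_1$, and then by time-reversibility together with the isometry property outside $[T_0,T_1]$ one obtains \eqref{eq: module strichartz, with metric perturbation} for all $t,t_0$. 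Write $w_0=w(T_1,\cdot)$; since the translations $D_{z_j}$ are among the generators of $\SM_{T_1}$, we have $w_0\in H^k(\R^n)$ with $\|w_0\|_{H^k(\R^n)}\le\|w_0\|_{\modulesp^k_{\SM_{T_1}}}$.

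Next I would commute the module through the equation. Fix generators $A_1(t),\dots,A_k(t)$ of $\SM_t$ and set $v=A_1(t)\cdots A_k(t)w$. Because $D_t+\Delta_0$ commutes with every generator of $\SM_t$ (as used in the proof of Lemma~\ref{lem:symm}), the terms produced by differentiating the $t$-dependent coefficients of the $A_j$ cancel precisely against $[A_1\cdots A_k,\Delta_0]w$, leaving
\[
(D_t+\Delta_{g(t)})v=-\,[A_1\cdots A_k,\;\Delta_{g(t)}-\Delta_0]\,w=:-G,\qquad v(T_1,\cdot)=A_1(T_1)\cdots A_k(T_1)w_0 .
\]
Here $G$ is a differential operator of order at most $k+1$ applied to $w$, whose coefficients are smooth, supported in the spatial variables in the fixed compact set $\supp\tilde g$, and uniformly bounded on $[T_0,T_1]$. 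This identity should first be verified for Schwartz $w_0$, where every manipulation is legitimate, and then extended by the density argument already used in Lemma~\ref{lem:symm}.

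The heart of the matter is to control $v$ despite the source $G$ carrying one derivative more than $\modulesp^k$-regularity provides; this is where the smoothing results of Section~\ref{subsec:DST} enter. Fix $\chi\in C_c^\infty(\R^n)$ equal to $1$ on $\supp\tilde g$. Doi's estimate applied with $s=k$ gives $w\in C([T_0,T_1];H^k(\R^n))$ and $\chi w\in L^2([T_0,T_1];H^{k+1/2}(\R^n))$ with norm $\lesssim\|w_0\|_{H^k}$. Since $G$ has order $\le k+1$ and is supported where $\chi\equiv1$, it follows that $\|G(t)\|_{H^{-1/2}}\lesssim\|\chi w(t)\|_{H^{k+1/2}}$, hence $G\in L^2([T_0,T_1];H^{-1/2})$ with $\|G\|_{L^2_tH^{-1/2}}\lesssim\|w_0\|_{\modulesp^k_{\SM_{T_1}}}$. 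Splitting $v=v_1+v_2$ with $v_1(t)=S(t,T_1)v(T_1,\cdot)$ and $v_2(t)=-i\int_{T_1}^t S(t,s)G(s)\,ds$, we have $\|v_1(t)\|_{L^2}=\|A_1(T_1)\cdots A_k(T_1)w_0\|_{L^2}\le\|w_0\|_{\modulesp^k_{\SM_{T_1}}}$ by unitarity of the metric Schr\"odinger propagator, while the dual/inhomogeneous form of the Doi--Staffilani--Tataru local smoothing estimate (using that $G$ is compactly supported in space, together with the Christ--Kiselev lemma for the truncated time integral) gives $\|v_2\|_{L^\infty([T_0,T_1];L^2)}\lesssim\|G\|_{L^2([T_0,T_1];H^{-1/2})}\lesssim\|w_0\|_{\modulesp^k_{\SM_{T_1}}}$. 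Summing the squares of these bounds over all products $A_1(t)\cdots A_k(t)$ --- and noting that for products of fewer than $k$ generators the commutator $G$ has order $\le k$, so already the bound $\chi w\in L^\infty([T_0,T_1];H^k)$ suffices --- yields $\sup_{t\in[T_0,T_1]}\|w(t,\cdot)\|_{\modulesp^k_{\SM_t}}\le C(T_0,T_1)\|w_0\|_{\modulesp^k_{\SM_{T_1}}}$, which combined with the first paragraph proves the proposition.

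The main obstacle is precisely the derivative loss in the commutator term $G$: it is of order $k+1$, whereas the quantity we propagate controls only $k$ derivatives. This is recovered by pairing the half-derivative gain in Doi's local smoothing for $w$ (from $H^k$ data to $\chi w\in L^2_tH^{k+1/2}$) with the half-derivative gain in the inhomogeneous-to-solution mapping for a compactly supported source ($L^2_tH^{-1/2}\to L^\infty_tL^2$), the two half-gains combining to the one derivative that is needed. The only other nontrivial point is the rigorous justification of the commutator identity for $w$ of finite regularity, which is the usual density argument.
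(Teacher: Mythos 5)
Your argument is correct, and it reaches \eqref{eq: module strichartz, with metric perturbation} by a genuinely different organization than the paper's. The paper splits $w=\chi w+(1-\chi)w$ for a cutoff $\chi\equiv 1$ on $\supp\tilde g$: the near piece is handled by noting that on $\supp\chi$ the $\modulesp^k_{\SM_t}$-norm is just the $H^k$-norm, controlled by the Doi/Staffilani--Tataru $H^k$ bounds, while the far piece is handled by a Gronwall-type energy estimate on $E_{V_1,\dots,V_k}(t)=\|V_1\cdots V_k(1-\chi)w\|_{L^2}^2$, whose only error term is the commutator $[D_t+\Delta_0,(1-\chi)]$ --- first order, localized on $\supp d\chi$ --- so that the paper needs only the \emph{homogeneous} (forward) local smoothing estimate $\int\|\tilde\chi w\|_{H^{k+1/2}}^2\,dt\lesssim\|w_0\|_{H^k}^2$ to close. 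You instead apply the full module monomial to $w$ globally, observe that $[D_t+\Delta_0,A_1\cdots A_k]=0$ so the source in the Duhamel formula is the commutator with $\Delta_{g(t)}-\Delta_0$ (order $k+1$, coefficients supported in $\supp\tilde g$ and bounded on $[T_0,T_1]$ --- this is correct, since every term in the expansion of the commutator carries a coefficient of $\tilde g$ or one of its derivatives), and then trade the two half-derivative gains: the forward smoothing $\chi w\in L^2_tH^{k+1/2}$ puts $G$ in $L^2_tH^{-1/2}_{\mathrm{comp}}$, and the dual/retarded smoothing estimate plus Christ--Kiselev maps that to $L^\infty_tL^2$. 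Both proofs ultimately rest on the same derivative-recovery mechanism; yours buys a single clean Duhamel identity at the price of invoking the inhomogeneous form of local smoothing for the variable-coefficient propagator $S(t,s)$, which is available from the machinery the paper sets up in Section~\ref{subsec:DST} (it is used there in exactly this dual-plus-Christ--Kiselev form).

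Two small imprecisions, neither fatal. First, $S(t,T_1)$ is unitary on $L^2$ with respect to the Riemannian measure $\rho\,dz$, not $dz$; since $\rho$ is uniformly comparable to $1$ you only get $\|v_1(t)\|_{L^2}\leq C(T_0,T_1)\|A_1(T_1)\cdots A_k(T_1)w_0\|_{L^2}$, which is all you need. Second, the density argument justifying the commutator identity is slightly circular as stated (Schwartz data is not known a priori to propagate as Schwartz through the perturbation); this is a standard technicality, and the paper's own energy computation has the identical issue, so it is not a gap specific to your approach.
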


\begin{rmk}
The constant in (\ref{eq: module strichartz, with metric perturbation}) may grow exponentially in time. However, this is unimportant as we only need (\ref{eq: module strichartz, with metric perturbation}) on the finite time interval $[T_0, T_1]$. Outside this time interval, the argument in Section \ref{sec: extend the solution} applies.
\end{rmk}

\begin{proof} As we have shown in Lemma~\ref{lem:symm}, we have $||w(t,\cdot)||_{\modulesp^k_{\mathcal{M}_t}}=||w(t_0,\cdot)||_{\modulesp^k_{\mathcal{M}_{t_0}}}$ when there are no metric or potential perturbations.

Now we use an energy estimate to prove (\ref{eq: module strichartz, with metric perturbation}). As before, we choose $\chi \in C_c^\infty(\R^{n+1})$ such that $\chi=1$ on $\supp \tilde{g} \cup \supp V$. Then the norm we need to control can be decomposed as
\begin{align} \label{eq:decompose u}
||w(t,\cdot)||_{\modulesp^k_{\mathcal{M}_t}} \leq
||\chi w(t,\cdot)||_{\modulesp^k_{\mathcal{M}_t}} +||(1-\chi)w(t,\cdot)||_{\modulesp^k_{\mathcal{M}_t}} 
\end{align}
Notice that on $\supp \chi$, $|z_i|,t$ are bounded, 
the module regularity norm defined by generators $\modulesp^k_{\mathcal{M}_t}$ is equivalent to the $H^k-$norm. We have
\begin{align*}
||w(t,\cdot)||_{\modulesp^k_{\mathcal{M}_t}} \lesssim
||\chi w(t,\cdot)||_{H^k_{\mathcal{M}_t}} +||(1-\chi)w(t,\cdot)||_{\modulesp^k_{\mathcal{M}_t}}.
\end{align*}
Notice that
\begin{align*}
||\chi w(t,\cdot)||_{H^k} \lesssim ||w(t,\cdot)||_{H^k},
\end{align*} 
and we can use \cite[Theorem~1.1]{Doi1994cauchy} or \cite[Remarks~1 and 3]{staffilani2002strichartz} to bound it by
\begin{align} \label{eq: Hk norm estimate, with perturbation}
||w(t,\cdot)||_{H^k} \leq C(t,t_0) ||w(t_0,\cdot)||_{H^k}.
\end{align}

For the term $||(1-\chi)w(t,\cdot)||_{\modulesp^k_{\mathcal{M}_t}}$, we consider
\begin{align*}
E_{V_1,...,V_k}(t):= \int |V_1...V_k(1-\chi)w|^2 dz,
\end{align*}
where $V_1,...,V_k$ are generators of $\mathcal{M}_t$ as in (\ref{eq: generator 01}).

Its time derivative is
\begin{align*}
\frac{d}{dt}E_{V_1,..V_k} = 2 \mathrm{Re} \int \partial_t \big(V_1...V_k(1-\chi)w \big) \, \overline{V_1...V_k(1-\chi)w} \,   dz.
\end{align*}

Since $w$ solves (\ref{eq: linear Schrodinger}), and since $\Delta_g = \Delta$ on the support of $1 - \chi$, we have
\begin{align*}
\partial_t(V_1...V_k(1-\chi)w)  = & V_1...V_k(1-\chi)\partial_tw + [\partial_t,V_1...V_k(1-\chi)] w
\\ = & -iV_1...V_k(1-\chi) \Delta w+[\partial_t,V_1...V_k(1-\chi)] w
\\ =  -i \Big(\Delta V_1...V_k(1-\chi) + &[V_1..V_k(1-\chi),\Delta]  \Big)w +[\partial_t,V_1...V_k(1-\chi)] w.
\end{align*}

Since $\Delta$ is self-adjoint, we have 
\begin{align*}
\mathrm{Re} (-i)\int \Delta V_1...V_k(1-\chi))w \, \overline{V_1...V_k(1-\chi)w} \,  dz =0.
\end{align*}
The remaining terms combine to give 
\begin{align*}
\frac{d}{dt}E_{V_1,..V_k} = & \Re \int 
\Big( i([ \Delta, V_1..V_k(1-\chi)]-[\partial_t,V_1...V_k(1-\chi)] w) \Big) \, \overline{V_1...V_k(1-\chi)w} \,  dz
\\ & = - \Im  \int 
\Big( [D_t+\Delta,V_1...V_k(1-\chi)] w \Big) \, \overline{V_1...V_k(1-\chi)w} \,  dz.
\end{align*}
We recall that $[D_t + \Delta, V_i] = 0$ for all $V_i$ in the module $\SM_t$. Consequently, we obtain 
\begin{align*}
\frac{d}{dt}E_{V_1,..V_k} =
\mathrm{Re} \int 
 i(V_1...V_k[D_t+\Delta,(1-\chi)] w) \, \overline{V_1...V_k(1-\chi)w} \,  dz.
\end{align*}
We apply Cauchy-Schwarz to this inequality, and we obtain, since there are $k+1$ derivatives on $w$ with compactly supported coefficients, and $k$ derivatives on the right hand side, 
\begin{equation}
\Big| \frac{d}{dt}E_{V_1,..V_k} \Big| \leq C  \| \tilde \chi w \|_{H^{k+1/2}}^2
\end{equation}
where $\tilde \chi \in C_c^\infty$ is identically $1$ on the support of $\nabla \chi$. Thus we have, for all $t \in [T_0, T_1]$, 
\begin{equation}
E_{V_1,..V_k}(t) \leq E_{V_1,..V_k}(T_1) + C \int_t^{T_1}  \| \tilde \chi w \|_{H^{k+1/2}}^2.
\end{equation}
Using the Doi local smoothing estimate, this is bounded by $E_{V_1,..V_k}(T_1) + C \int_t^{T_1}  \| w(T_1) \|_{H^{k}}^2$. Summing over all possible choices of $V_1, \dots, V_k$ we obtain \eqref{eq: module strichartz, with metric perturbation}. 
\end{proof}

Using this we can state the following version of Lemma~\ref{lem:symm} that holds for our metric perturbation:

\begin{lmm}\label{lem:symm-metric} Consider a metric perturbation as described above. 
Assume that $g \in \modulesp^k_{\SM_T}$ and  $F \in L^1_t \modulesp^k_{\SM_t}$ for some $k \geq 1$. Then $u \in L^\infty_t \modulesp^k_{\SM_t}$, and we have on any finite interval $[T, T^*] \ni t$
\begin{equation}\label{eq:isometry}
 \| S(t, T) g \|_{\modulesp^k_{\SM_t}} \leq C \| g \|_{\modulesp^k_{\SM_t}} 
\end{equation}
and the  `Strichartz-type estimate' 
\begin{equation}
 \| u \|_{L^\infty([T, T^*];  \modulesp^k_{\SM_t})} \leq  C \Big( \| g \|_{\modulesp^k_{\SM_t}} + \| F \|_{L^1([T, T^*];  \modulesp^k_{\SM_t})} \Big) .
 \label{eq:Duhamelbound-metric}\end{equation}
\end{lmm}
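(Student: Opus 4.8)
The first estimate in the statement is precisely Proposition~\ref{prop: module Strichartz, with perturbation} applied to $w(t) = S(t, T) g$, which solves \eqref{eq: linear Schrodinger} with $w_0 = g$ and $t_0 = T$. That proposition gives $\| S(t,T) g \|_{\modulesp^k_{\SM_t}} \leq C(t, T) \| g \|_{\modulesp^k_{\SM_T}}$; since the map $(t, T) \mapsto C(t, T)$ is bounded for $t, T$ in the compact set $[T, T^*]$ — it grows at worst exponentially in $|t - T|$, as remarked after that proposition — and since for $t, t' \in [T, T^*]$ the norms $\modulesp^k_{\SM_t}$ and $\modulesp^k_{\SM_{t'}}$ are equivalent with constants depending only on the interval, we obtain the claimed uniform bound on $[T, T^*]$.

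For the Strichartz-type estimate \eqref{eq:Duhamelbound-metric} we start from Duhamel's formula \eqref{eq:Duhamel-metric},
\begin{equation*}
u(t, \cdot) = S(t, T) g + i \int_T^t S(t, s) F(s, \cdot) \, ds .
\end{equation*}
First note this is well defined with values in $\modulesp^k_{\SM_t}$: by the strong continuity of the evolution family $S(t,s)$ the integrand $s \mapsto S(t,s) F(s, \cdot)$ is measurable with values in $\modulesp^k_{\SM_t}$, and by the bound just proved $\| S(t,s) F(s, \cdot) \|_{\modulesp^k_{\SM_t}} \leq C \| F(s, \cdot) \|_{\modulesp^k_{\SM_s}}$, which is integrable in $s$ by hypothesis, so the Bochner integral converges. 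Taking $\modulesp^k_{\SM_t}$-norms, Minkowski's integral inequality together with the first estimate gives, for each $t \in [T, T^*]$,
\begin{equation*}
\| u(t, \cdot) \|_{\modulesp^k_{\SM_t}} \leq \| S(t,T) g \|_{\modulesp^k_{\SM_t}} + \int_T^t \| S(t,s) F(s, \cdot) \|_{\modulesp^k_{\SM_t}} \, ds \leq C \| g \|_{\modulesp^k_{\SM_T}} + C \int_T^{T^*} \| F(s, \cdot) \|_{\modulesp^k_{\SM_s}} \, ds ,
\end{equation*}
and taking the supremum over $t \in [T, T^*]$ yields \eqref{eq:Duhamelbound-metric}. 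To make this fully rigorous one first runs the argument for $g$ Schwartz and $F$ an $L^1$ function of time with values in Schwartz functions of $z$ — for which Doi's regularity theory (as recalled in Section~\ref{subsec:DST}), together with the commutator computation in the proof of Proposition~\ref{prop: module Strichartz, with perturbation}, shows that the module derivatives $A_1(t) \cdots A_k(t) u$ solve a linear Schr\"odinger system with $L^1_t L^2_z$ forcing and hence lie in $L^\infty_t L^2_z$ — and then extends to general $(g, F) \in \modulesp^k_{\SM_T} \oplus L^1_t \modulesp^k_{\SM_t}$ by density, exactly as in the proof of Lemma~\ref{lem:symm}(ii), using the uniform bound to pass to the limit. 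This also establishes $u \in L^\infty_t \modulesp^k_{\SM_t}$.

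The only point requiring genuine care is the uniform boundedness of the constant $C(t, s)$ from Proposition~\ref{prop: module Strichartz, with perturbation} for $(s, t)$ in the compact set $\{ T \leq s \leq t \leq T^* \}$. Unwinding the proof of that proposition, $C(t,s)$ is built out of the Doi / Staffilani--Tataru $H^k$-propagation constant and a Gr\"onwall factor coming from the energy estimate in the region where $1 - \chi \neq 0$; both depend continuously on the endpoints and grow at worst exponentially in $|t - s|$, hence are bounded on the compact set in question. Moreover, since $\tilde g$ is supported in a fixed compact time interval $[T_0, T_1]$, outside that interval $S(t,s) = e^{-i(t-s)\Delta}$ and Lemma~\ref{lem:symm} applies with constant $1$; thus the only place where Proposition~\ref{prop: module Strichartz, with perturbation} is genuinely invoked is the bounded interval $[T_0, T_1]$, which is where the (routine) work is concentrated.
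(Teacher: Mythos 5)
Your proof is correct and follows exactly the route the paper takes: the paper's own "proof" is the single sentence that the lemma is immediate from Proposition~\ref{prop: module Strichartz, with perturbation} and Duhamel's formula \eqref{eq:Duhamel-metric}, and your argument simply fills in those details (Minkowski's inequality, uniformity of $C(t,s)$ on the compact triangle, and the density step borrowed from Lemma~\ref{lem:symm}(ii)). No discrepancy to report.
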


The proof of this is immediate from Proposition~\ref{prop: module Strichartz, with perturbation} and Duhamel's formula \eqref{eq:Duhamel-metric} for the solution $u$.

\subsection{Solving the full nonlinear equation \eqref{eq:NLS} through the metric perturbation}
For the full nonlinear equation 
\begin{equation}\label{eq:NLS-metric}
(D_t + \Delta_{g(t)} + V) u = - |u|^{p-1} u,
\end{equation}
we have global mass and energy bounds, where the mass and energy are now
\begin{equation}\begin{aligned}
M_g(t) &= \int |u(z, t)|^2 \rho(z,t) \, dz, \\
E_g(t) &= \frac1{2} \int \sum_{j,k} g^{jk}(z, t) \partial_j u(z, t) \partial_k u(z, t) \, \rho(z, t) \, dz + \frac1{p+1} \int |u(z,t)|^{p+1} \, \rho(z, t) \, dz. 
\end{aligned} \end{equation}
Here, $\rho dz$ is the Riemannian measure. It is straightforward to check that on the interval $[T_0, T_1]$ supporting the metric perturbation, we have  
\begin{equation}\begin{aligned}
\Big| \frac{d}{dt} M_g(t) \Big|  &\leq C M_g(t),   \\
\Big| \frac{d}{dt} E_g(t) \Big|  &\leq  C E_g(t).
\end{aligned} \end{equation}
It follows that these have at most exponential growth or decay with rate $e^{\pm Ct}$, and are therefore uniformly comparable over the time interval $[T_0, T_1]$. Off this interval, the mass is constant and the argument in Lemma~\ref{lem:energybdd} shows that the energy is uniformly bounded. 

Using these bounds we can apply Kato's method, see e.g.\ \cite[Section 4.4]{cazenave2003semilinear}, to obtain a bound on the Strichartz norm  
$$
\| u \|_{L^q([T_0, T_1]; W^{1, r}_z}
$$
for any finite energy solution to \eqref{eq:NLS-metric}. This only requires the corresponding estimate for the propagator $S(t, t')$ for the linear equation $(D_t + \Delta_{g(t)})u = 0$, as we showed in Section~\ref{subsec:DST}, and the global mass and energy bounds. Then, using the Sobolev embedding from $W^{1,r}(\R^n)$ to $L^\infty(\R^n)$, for $r > n$, we see that we have $\| u(t) \|_{L^\infty}$ in $L^q([T_0, T_1])$ for any admissible exponents $(q, r)$ with $r > n$. In particular this allows us to conclude that $\| u(t) \|_{L^\infty}$ in $L^{p-1}([T_0, T_1])$, for any pair $(n, p)$ satisfying \eqref{eq:np}. 

We next observe that, for any initial condition in $\modulesp^k_{\mathcal{M}_t}$, the solution is in $L^\infty \modulesp^k_{\mathcal{M}_t}$. We do this by showing that we can construct the solution as a fixed point of a map on 
$L^\infty(I_t; \modulesp^k_{\mathcal{M}_t})$ where we have a uniform lower bound on the time interval $I$. We omit the details, as the argument is essentially as in Section~\ref{sec: extend the solution}. The key point is that we have the $L^{p-1}_t L^\infty_z$ estimate, plus the Strichartz-type estimate \eqref{eq:Duhamelbound-metric}, required for the argument to proceed. 

\begin{proof}[Proof of Theorem~\ref{thm:metric}] We begin as for the proof of Theorem~\ref{thm:main1}, by solving the final state problem for large time. We then extend the solution, using the procedure outlined in this subsection, until we have covered the time interval on which the metric perturbation is supported. Then we solve for the remaining time interval as in Propositions~\ref{prop:soln-global} and \ref{prop:convergence-infty} and obtain convergence to the final state as $t \to -\infty$.
\end{proof}


\section{Conclusion}

In conclusion we mention several research directions which are suggested by the current article. 

$\bullet$ It would be desirable to have a framework to treat a wider class of potentials including time-independent potentials with suitable smoothness and decay. This is equally true for the spacetime Fredholm approach in the works \cite{gell2022propagation} and \cite{gell2023scattering}, where the 3-scattering calculus of Vasy \cite{Vasy20003body}, or the 3b-calculus of Hintz \cite{Hintz3b}, likely offer a valuable technique for doing this. 

$\bullet$ The assumption that the metric perturbation is compactly supported in spacetime is unnatural, and should preferably be replaced by a decay condition. Moreover, it would be valuable to replace the smoothness assumption on the metric with a finite-regularity assumption, which would open the door to studying a suitable class of quasilinear Schr\"odinger equations in module regularity spaces. 

$\bullet$  One could also consider Schr\"odinger equations in an asymptotically conic geometry instead of on $\R^n$, as in \cite{HZ2016}. 

$\bullet$  It would be desirable to go beyond the case of polynomial nonlinearities, and to treat the case of nonlinearities $f(|u|^2) u$ for suitable smooth functions $f$, with $u$ in a module regularity space. Similarly, it would be interesting to treat derivative nonlinearities, such as $|\nabla u|^2 u$. A small data theory for such nonlinearities was given in \cite{gell2023scattering}. 

$\bullet$ It would be desirable to make more serious use of the 1-cusp structure than has been done in the present article. This should allow one to make sense of a fractional order of module regularity, and thereby sharpen some of the assumptions on module regularity. 


\bibliographystyle{plain}
\bibliography{SchLargeData-arXiv-version}

\end{document}